\newtheorem{theorem}{Theorem}[section]
\newtheorem{lemma}[theorem]{Lemma}
\let\cite=\citep
\begin{document}

\title{Stability and Hopf bifurcation analysis of a two state delay differential equation modeling the human respiratory system }

\author{Nirjal Sapkota%
  \thanks{Electronic address: \texttt{nxs167030@utdallas.edu}; Corresponding author}}
\affil{Department of Mathematical Sciences,\\ The University of Texas at Dallas\\ Richardson, TX, 75080, USA}

\author{Janos Turi%
  \thanks{Electronic address: \texttt{turi@utdallas.edu}}}
\affil{Department of Mathematical Sciences,\\ The University of Texas at Dallas\\ Richardson, TX, 75080, USA}

\date{}

	\maketitle

	\section*{Abstract}

	We study the two state model which describes the balance equation for carbon dioxide and oxygen. These are nonlinear parameter dependent and because of the transport delay in the respiratory control system, they are modeled with delay differential equation. So, the dynamics of a two state one delay model are investigated. By choosing the delay as a parameter, the stability and Hopf bifurcation conditions are obtained.  We notice that as the delay passes through its critical value, the positive equilibrium loses its stability and Hopf bifurcation occurs. The  stable region of the system with delay against the other parameters and bifurcation diagrams are also plotted. The three dimensional stability chart of the two state model is  constructed. We find that the  delay parameter has effect on the stability but not on the equilibrium state. 	The explicit derivation of the direction of  Hopf  	bifurcation and the stability of the bifurcation periodic solutions are determined with the help of normal form theory and center manifold theorem to delay differential equations. 	Finally, some numerical example and simulations are carried out to confirm the analytical findings. The numerical simulations verify the theoretical results.

	\section{Introduction}
	\label{ch:intro}

	In human respiratory system, the goal is to exchange the unwanted byproduct such as carbon dioxide for oxygen. The carbon dioxide is exchanged for oxygen by passive diffusion. Alveoli is the tiny  air sacs in the lungs where the exchange of oxygen and carbon dioxide takes place. The respiratory control system changes the rate of ventilation in response to the levels of oxygen and carbon dioxide in the body. The time delay is due to the physical distance where carbon dioxide and oxygen level information is transported to the sensory control system before the ventilatory response can be adapted.

	Understanding the human respiratory system is important for many medical conditions. The human respiratory and its control mechanics have been studied for more than hundred years.   This system has important medical implications some of which are listed below \cite{berry2012rules, leung2001sleep, khoo1982factors, eckert2007central}.
	\begin{outline}
		\1 Periodic Breathing: Periodic Breathing is define as three or more episodes of central apnea lasting at least 3 seconds, separated by no more than 20 seconds of normal breathing.  . 
		\1 Sleep Apnea: Sleep Apnea is a disorder in which breathing repeatedly stops and stars again. It is classified in two ways.
		\2 Central Sleep Apnea is a disorder which is characterized by a lack of drive to breathe.
		\2 Obstructive Sleep Apnea is a disorder which is characterized by episodes of partial or complete physical obstruction of the airflow.
		\1 Cheyne-Stokes Respiration: Cheyne-Stokes Respiration is a disorder which is characterized by gradual increase in breathing followed by decrease or absence of breathing. 
	\end{outline} 
	
	These disorders have been associated with a number of medical conditions such as hypertension, heart failures, diabetes and others \cite{lanfranchi1999prognostic, javaheri1998sleep, altevogt2006sleep}. 
	

	\subsection{Components of the Model}
	The carbon dioxide and oxygen levels are monitored at two respiratory centers in the body. These are called central  and peripheral chemoreceptors.

	\subsubsection{Central chemoreceptors}
	
	Central chemoreceptors are located at the ventral surface of the medulla in the brain. These respond to the changes in the partial pressure of carbon dioxide in the brain.
	
	\subsubsection{Peripheral chemoreceptors}
	
	Peripheral chemoreceptors are located in the  carotid bodies at the junction of the common carotid arteries and also at the aortic bodies. These respond to the changes in the partial pressure of both carbon dioxide and oxygen in arterial blood.
	
	Since these respiratory centers are located at a distance from the lungs where the  levels of the carbon dioxide and oxygen are regulated, there will be some delay (two transport delays)  in the process. This regulation is modeled with the ventilation function.
	
	\subsubsection{Ventilation function}
	We assume some conditions for the ventilation functions $V(x,y)$ to be biologically realistic model.
	
	\begin{itemize}
		\item $V(x,y) \geq 0$ and $ V(0,0) = 0$
		\item $V(x,y) $ is differentiable 
		\item $V(x,y)$ is an increasing function in both $x$ and $y$
		\item $ \frac{\partial V(x,y)}{\partial x} > 0 $ and  $ \frac{\partial V(x,y)}{\partial y} > 0$
	\end{itemize}
	
	\subsection{Model equation}
	
	Although a five-state model involving three compartments and two control loops with multiple delays is investigated in   ~\cite{khoo1982factors}, here we will  study a two state model with one time delay discussed in \cite{cooke1994stability} and \cite{kollar2005numerical}. A block diagram of the respiratory system is shown in Figure \ref{fig:RCS1}. The controller adjusts to inputs from the state (i.e., sleep, wakefulness)  and the chemoreceptors  which respond to the change in carbon dioxide and oxygen concentration.

	\begin{figure}[H]
		\centering
		\includegraphics[width=1\textwidth]{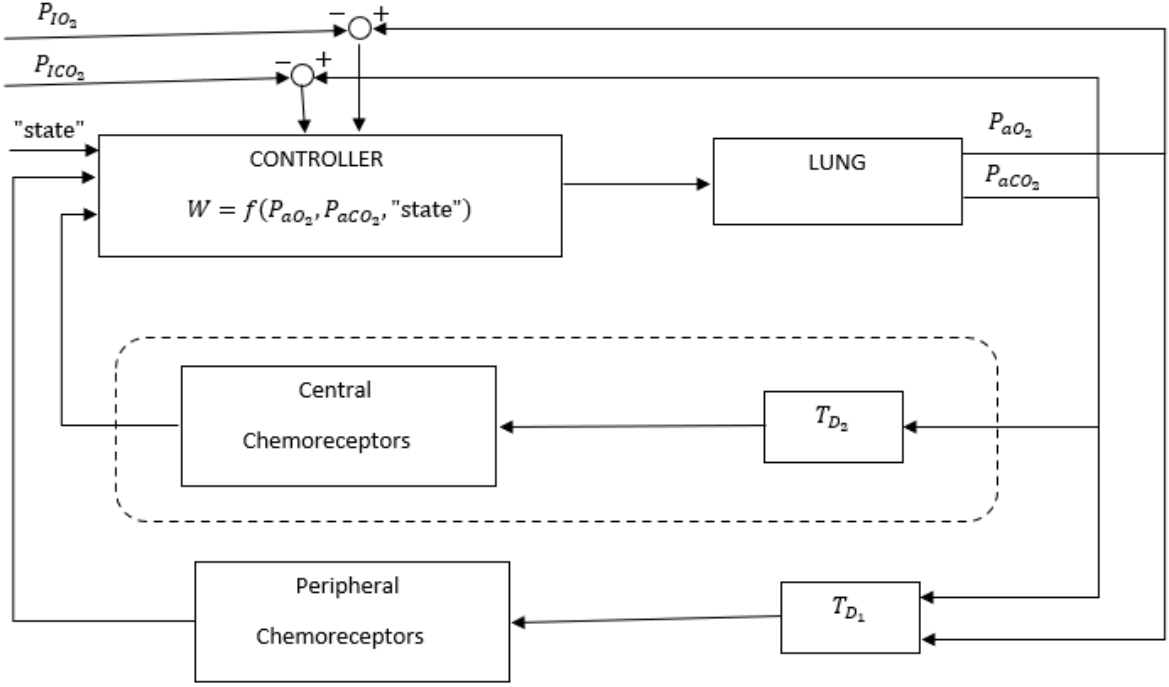}	
		\caption{Block diagram of respiratory control system}	
		\label{fig:RCS1}
	\end{figure}

	We consider the following two state model  for studying stability and bifurcation of a human respiratory system.
	
	\begin{equation} \label{eq:sys0}
		\begin{aligned}
			\frac{\mathrm{d} \tilde{x}}{\mathrm{d} t} &= p - \alpha  \,  W(\tilde{x}(t - \tau), \tilde{y}(t - \tau)) (\tilde{x}(t) -x_I)\\
			\frac{\mathrm{d} \tilde{y}}{\mathrm{d} t} &= -\sigma + \beta  \,  W(\tilde{x}(t - \tau), \tilde{y}(t - \tau)) (y_I - \tilde{y}(t))
		\end{aligned}
	\end{equation}
	where
	
	\begin{itemize}
		\item $ \tilde{x}(\cdot) $, $ \tilde{y}(\cdot) $  represent the arterial carbon dioxide and oxygen concentration
		\item $ W(\cdot,\cdot)  $ is the ventilation function which represents the volume of gas moved by the respiratory system
		\item $ \tau  $ is the transport delay ($\tau >0$ and $\tau = T_{D_{1}}$ in Figure \ref{fig:RCS1})
		\item $x_I$, $y_I$ are inspired carbon dioxide and oxygen concentration
		\item $p$ is the carbon dioxide production rate
		\item $\sigma$ is the oxygen consumption rate
		\item $\alpha$, $\beta$ are positive constants associated with the diffusibility of carbon dioxide and oxygen respectively
	\end{itemize}

	For studying the stability analysis with a more convenient system, we convert the system (\ref{eq:sys0}) using
	\begin{equation}
		\begin{aligned}
			x(t) &= a(\tilde{x}(t) - x_I)\\
			y(t) &= b(y_I - \tilde{y}(t))
		\end{aligned}
	\end{equation}
	Solving for $\tilde{x}(t-\tau)$ and $\tilde{y}(t-\tau)$, we get,
	
	\begin{equation} \label{eq:xytau1}
		\begin{aligned}
			\tilde{x}(t-\tau) &= x_I + \frac{1}{a}x(t-\tau)\\
			\tilde{y}(t-\tau) &= y_I - \frac{1}{b}y(t-\tau)\\
		\end{aligned}
	\end{equation}
	
	Using Equation (\ref{eq:xytau1}) in Equation (\ref{eq:sys0}), we obtain
	
	\begin{equation} \label{eq:sys0p5}
		\begin{aligned}
			\frac{\mathrm{d} x}{\mathrm{d} t} &= a \frac{\mathrm{d} \tilde{x}}{\mathrm{d} t}= a p - a \alpha W\left(x_I + \frac{1}{a}x(t-\tau), y_I - \frac{1}{b}y(t-\tau)\right)\frac{x(t)}{a}\\
			\frac{\mathrm{d} y}{\mathrm{d} t} &= -b \frac{\mathrm{d} \tilde{y}}{\mathrm{d} t} = b \sigma-  b \beta W\left(x_I + \frac{1}{a}x(t-\tau), y_I - \frac{1}{b}y(t-\tau)\right)\frac{y(t)}{a} \\
		\end{aligned}
	\end{equation}
	Setting $a = 1/p$ and $b = 1/\sigma$, we obtain the  equations
	\begin{equation} \label{eq:sys1}
		\begin{aligned}
			\frac{\mathrm{d} x}{\mathrm{d} t} &= 1 - \alpha \,  V(x(t-\tau), y(t-\tau))\, x(t)\\
			\frac{\mathrm{d} y}{\mathrm{d} t} &= 1 - \beta \, V(x(t-\tau), y(t-\tau))\, y(t)\\	
		\end{aligned}
	\end{equation}
	where the ventilation function is given by 
	\begin{equation} \label{eq:vent0}
		V(x(t-\tau), y(t-\tau)) = W(\tilde{x}(t-\tau), \tilde{y}(t-\tau))
	\end{equation}
	We will study the system (\ref{eq:sys1}) with 
	\begin{equation} \label{eq:vent1}
		V(x(t-\tau), y(t-\tau)) = 0.14\,e^{-0.05(100-y(t-\tau))} \, x(t-\tau)
	\end{equation}
	The state variables are concentrations in our model.
	
	
	\section{Stability and Hopf Bifurcation} 
	\label{stability}
	
	In recent years,  a lot of delay differential equations modeling various chemical, biological, ecological systems have been studied \cite{bi2013bifurcations, du2010hopf, epstein1998introduction, su2009hopf, murray2002mathematical, yafia2007hopf, belair1994stability, roose2007continuation}. With the outbreak of COVID-19 pandemic, many  mathematical models using delay differential equations have been proposed \cite{menendez2020elementary, rihan2021dynamics, shayak2020delay, paul2021distribution, dell2020solvable, guglielmi2022delay, liu2020covid}.
	
Li and Zhang \cite{li2021dynamic}, B{\i}lazero{\u{g}}lu \cite{bilazerouglu2022hopf}, studied the dynamic analysis and Hopf bifurcation of a Lengyel-Epstein system with two delays. Li \cite{li1999stability} studied a class of delay differential equations with two delays. Kumar et al \cite{ kumar2020hopf} proposed a multiple delayed innovation diffusion model with Holling II functional response. Delayed predator-prey system have been investigated by many researchers  (\cite{song2005local, yan2006hopf, bairagi2011stability, song2004stability, khellaf2010boundedness, zhang2013hopf, wang2012hopf, sun2007analysis, celik2015stability, ccelik2008stability, tang2007stability, zhou2012hopf}). Ghosh et al \cite{ghosh2021dynamics} studied the rumor spread mechanism and the influential factors using epidemic like model. Several researchers have analyzed the U{\c{c}}ar  prototype system \cite{uccar2002prototype, bhalekar2016stability, li2004hopf, li2019bifurcation}. Wei \cite{wei2007bifurcation} discussed the dynamics of a scalar delay differential equation. Gilsinn \cite{gilsinn2002estimating} estimates the bifurcation parameter of delay differential equation with application to machine tool chatter. There has been a focus on studying stability and Hopf bifurcation by choosing the delay as a parameter of the system with the linear stability methods.

	The complex system modeling the human respiratory control system have been studied for several decades. Mackey and Glass \cite{macke1977oscillation}, Khoo et al \cite{khoo1982factors}, Batzel et al \cite{batzel2000stability, batzel2007cardiovascular} have investigated stability analysis.

	
	\subsection{Equilibrium point}

	\begin{lemma}
		\label{posequi}
		There is a unique positive equilibrium point $E_{*}(x_{*}, y_{*})$ of system (\ref{eq:sys1}).
		
	\end{lemma}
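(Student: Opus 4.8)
The plan is to reduce the two equilibrium relations to a single scalar equation in one unknown and then settle existence and uniqueness by monotonicity. An equilibrium $E_*(x_*,y_*)$ is a constant solution of (\ref{eq:sys1}), so the delayed arguments coincide with the present ones, $x(t-\tau)=x_*$ and $y(t-\tau)=y_*$; the delay $\tau$ therefore drops out entirely, and $E_*$ must solve
\[
1=\alpha\,V(x_*,y_*)\,x_*,\qquad 1=\beta\,V(x_*,y_*)\,y_*.
\]
(This also records the claim, made in the abstract, that $\tau$ does not affect the equilibrium.) Both right-hand sides equal $1>0$, so $V(x_*,y_*)\neq 0$; dividing the two equations gives $\alpha x_*=\beta y_*$, i.e.\ $y_*=(\alpha/\beta)x_*$. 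In particular $x_*$ and $y_*$ have the same sign, so a positive equilibrium means $x_*>0$ and $y_*>0$.

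Next I would substitute the explicit ventilation function (\ref{eq:vent1}), $V(x,y)=0.14\,e^{-0.05(100-y)}x$, together with $y_*=(\alpha/\beta)x_*$, into the first equilibrium equation. Using $-0.05(100-y_*)=-5+0.05(\alpha/\beta)x_*$ this becomes
\[
1=0.14\,\alpha\,e^{-5}\,x_*^{2}\,e^{(0.05\alpha/\beta)x_*}=:g(x_*).
\]
It then remains to show that $g$ has exactly one positive root. The function $g$ is continuous on $(0,\infty)$ with $g(0^{+})=0$ and $g(x)\to\infty$ as $x\to\infty$, and it is the product of the strictly increasing positive functions $x\mapsto x^{2}$ and $x\mapsto e^{(0.05\alpha/\beta)x}$ (here $\alpha,\beta>0$), hence strictly increasing on $(0,\infty)$. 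By the intermediate value theorem a root exists, and strict monotonicity makes it unique; call it $x_*$. Setting $y_*=(\alpha/\beta)x_*>0$ and $V(x_*,y_*)=1/(\alpha x_*)>0$ then produces a genuine positive equilibrium, and by construction it is the only one.

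The argument is essentially elementary, and there is no serious obstacle; the only point needing care is the reduction step, namely checking that the two equilibrium relations are equivalent, on the positive cone, to the single scalar equation $g(x_*)=1$ — this is just the sign bookkeeping that forces $x_*,y_*>0$ and $V(x_*,y_*)>0$. Once the problem has this form, with $g$ manifestly strictly increasing from $0$ to $\infty$, existence and uniqueness are immediate and no fixed-point or degree-theoretic tools are required. If one wanted the statement for a general ventilation function satisfying only the monotonicity hypotheses of Section~\ref{ch:intro}, the same scheme applies, but strict monotonicity of the reduced function would have to be deduced from $\partial V/\partial x>0$ and $\partial V/\partial y>0$ rather than read off directly, and that is where the bulk of the work would lie.
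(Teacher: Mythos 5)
Your proof is correct and follows essentially the same route as the paper: both reduce the two equilibrium conditions to the proportionality $\alpha x_* = \beta y_*$ and then to a single scalar equation whose left-hand side increases strictly from $0$ to $\infty$ on the positive half-line, giving existence and uniqueness of the positive root by monotonicity. The only differences are cosmetic — the paper eliminates $x_*$ rather than $y_*$ and additionally records the closed-form solution via the Lambert $W$-function (used later for plotting), which is not needed for the lemma itself.
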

	\begin{proof}
		The equilibrium point  $E_{*}(x_{*}, y_{*})$ is obtained by solving
		\begin{equation}
			\begin{aligned}
				1 -\alpha\, 0.14\,e^{-0.05(100-y_{*})}x_{*}^2 & = 0 \\
				1 -\beta\, 0.14\,e^{-0.05(100-y_{*})}y_{*} & = 0\\
			\end{aligned}
		\end{equation}

		We also notice that
		$$x_{*} \neq 0, y_{*} \neq 0,  \text{ and }  x_{*} = \frac{\beta}{\alpha} y_{*}$$
		Then, rewriting as exact fractions and solving the equation
		\begin{equation}
			1 - \alpha \left( \frac{14}{100} \right) \,e^{-\frac{5}{100}(100-y_{*})} \frac{\beta^2 y_{*}^2}{\alpha^2} = 0
		\end{equation}	
		we get,
		\begin{equation}  \label{eq:ystar}
			y_{*} = 40\, W\left(\frac{e^{5/2} \sqrt{\frac{\alpha }{\beta ^2}}}{4 \sqrt{14}}\right)
		\end{equation}
		and 	
		\begin{equation} \label{eq:xstar}
			x_{*} = 40\, \left(\frac{\beta}{\alpha}\right) W\left(\frac{e^{5/2} \sqrt{\frac{\alpha }{\beta ^2}}}{4 \sqrt{14}}\right)
		\end{equation}
		where $W$ represents the Lambert $W$-function.\\
		Since $V(0,0) = 0$  and $$  V\left(\frac{\beta y_{*}}{\alpha} , y^{*}\right) =  \left( \frac{14}{100} \right) \,e^{-\frac{5}{100}(100-y_{*})} \frac{\beta y_{*}}{\alpha}$$ is increasing in $y_{*},$ there is a unique positive solution $y_{*}$. \\
		For the default values of $\alpha = 0.5$ and $\beta = 0.8$, we get $(x_{*}, y_{*}) \approx (29.1842, 18.2401).$ 
		
	\end{proof}
	
	We plot the Equations (\ref{eq:xstar}) and (\ref{eq:ystar}) as a function of $\alpha$ and $\beta$ in Figure \ref{fig:equi_xyvsalpha_beta0p8} and \ref{fig:equi_xyvsbeta_alpha0p5}.
	
	\begin{figure}[H]
		\centering
		\begin{subfigure}[b]{0.45\textwidth}
			\centering
			\includegraphics[width=1\textwidth]{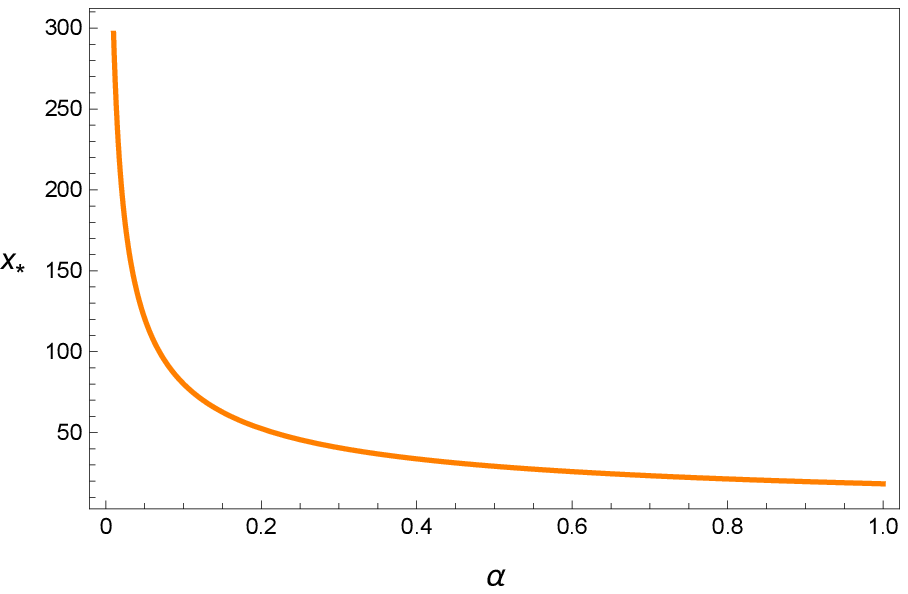}		
			\label{fig:xstarvsalpha_beta0p8}
		\end{subfigure}
		\hfill
		\begin{subfigure}[b]{0.45\textwidth}
			\centering
			\includegraphics[width=1\textwidth]{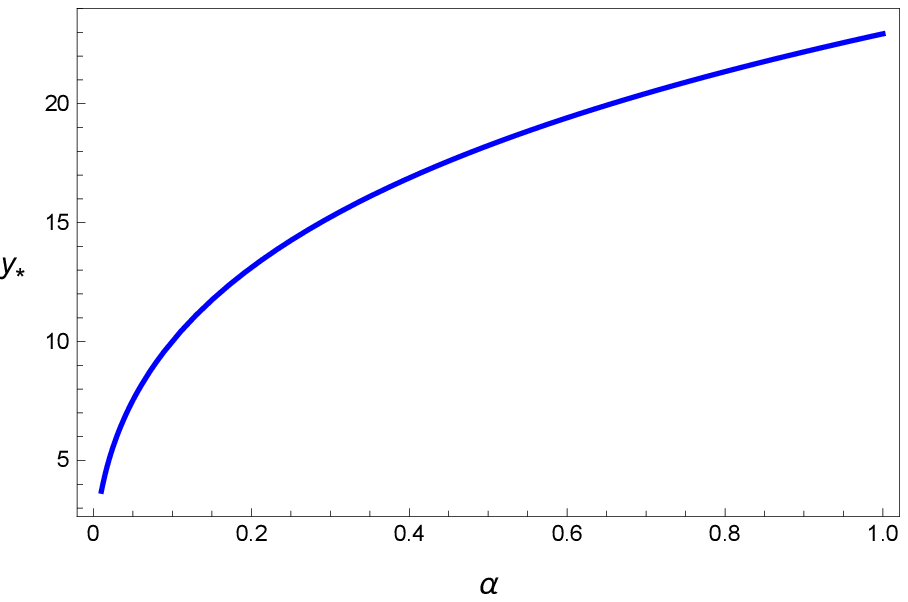}		
			\label{fig:ystarvsalpha_beta0p8}
		\end{subfigure}
		\caption{Equilibrium points as a function of $\alpha$ with $\beta = 0.8$.}
		\label{fig:equi_xyvsalpha_beta0p8}
	\end{figure}
	
	\begin{figure}[H]
		\centering
		\begin{subfigure}[b]{0.45\textwidth}
			\centering
			\includegraphics[width=1\textwidth]{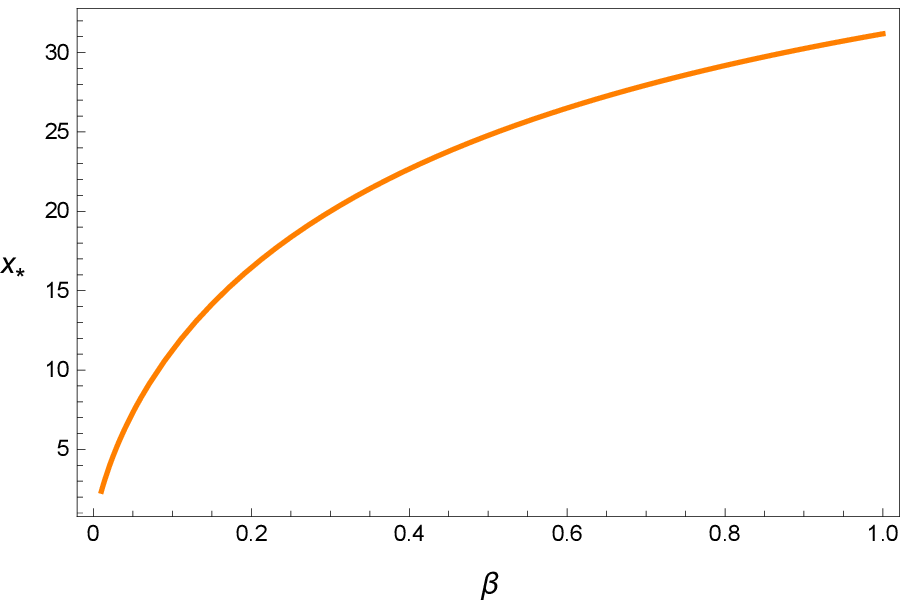}		
			\label{fig:xstarvsbeta_alpha0p5}
		\end{subfigure}
		\hfill
		\begin{subfigure}[b]{0.45\textwidth}
			\centering
			\includegraphics[width=1\textwidth]{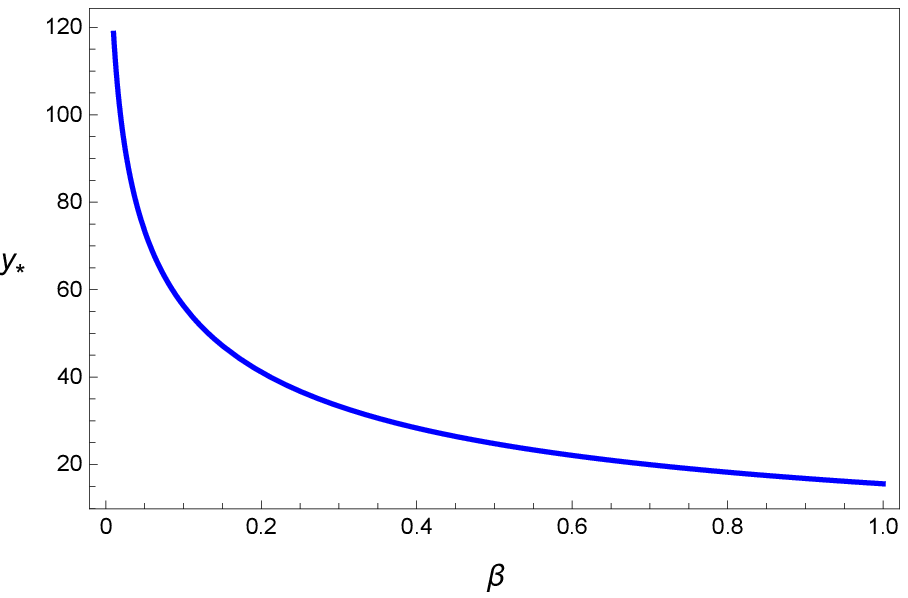}		
			\label{fig:ystarvsbeta_alpha0p5}
		\end{subfigure}
		\caption{Equilibrium points as a function of $\beta$ with $\alpha = 0.5$.}
		\label{fig:equi_xyvsbeta_alpha0p5}
	\end{figure}

	\subsection{Stability of the Equilibrium point}
	Let $u(t) = x(t) - x_{*},\, v(t) = y(t) - y_{*}.$ Then, the linearized system of (\ref{eq:sys1}) is given as follows:
	
	\begin{equation}  \label{eq:linearizeda}
		\begin{aligned}
			\frac{\mathrm{d} u(t)}{\mathrm{d} t} &=  -\alpha V(x_{*}, y_{*}) u(t) - \alpha x_{*} V_{x}(x_{*}, y_{*}) u(t - \tau) -  \alpha x_{*} V_{y}(x_{*}, y_{*}) v(t - \tau)  \\
			\frac{\mathrm{d} v(t)}{\mathrm{d} t} &=  -\beta V(x_{*}, y_{*}) v(t) - \beta y_{*} V_{x}(x_{*}, y_{*}) v(t - \tau) -  \beta y_{*} V_{y}(x_{*}, y_{*}) v(t - \tau) 			
		\end{aligned}	
	\end{equation}
	
	This could be written in the form as
	\begin{equation} \label{eq:linearized}
		\frac{\mathrm{d} }{\mathrm{d} t} \begin{pmatrix}
			u(t)  \\ v(t)
		\end{pmatrix}  + A_1  \begin{pmatrix} 
			u(t)  \\ v(t)
		\end{pmatrix} + B_1 \begin{pmatrix}
			u(t - \tau)  \\ v(t - \tau) \end{pmatrix}  = 
		\begin{pmatrix}
			0  \\0
		\end{pmatrix}		
	\end{equation}
	where 
	\begin{equation*} A_1 =  
		\begin{pmatrix}
			\alpha V(x_{*},y_{*}) & 0 \\0  & \beta V(x_{*},y_{*}) 
		\end{pmatrix} = 		
		\left(
		\begin{array}{cc}
			\frac{7}{50} \alpha  x_* e^{\frac{1}{20} \left(y_*-100\right)} & 0 \\
			0 & \frac{7}{50} \beta  x_* e^{\frac{1}{20} \left(y_*-100\right)} \\
		\end{array}
		\right)
	\end{equation*}	and 	
	\begin{equation*}			
		B_1 =  
		\begin{pmatrix}
			\alpha x_{*} V_{x}(x_{*},y_{*}) & 	\alpha x_{*} V_{y}(x_{*},y_{*}) \\ 	\beta y_{*} V_{x}(x_{*},y_{*})  &\beta y_{*} V_{y}(x_{*},y_{*})
		\end{pmatrix} =
		\left(
		\begin{array}{cc}
			\frac{7}{50} \alpha  x_* e^{\frac{1}{20} \left(y_*-100\right)} & \frac{7 \alpha  x_*^2 e^{\frac{1}{20} \left(y_*-100\right)}}{1000} \\
			\frac{7}{50} \beta  e^{\frac{1}{20} \left(y_*-100\right)} y_* & \frac{7 \beta  x_* e^{\frac{1}{20} \left(y_*-100\right)} y_*}{1000} \\
		\end{array}
		\right)	
	\end{equation*}
	
	The associated  characteristic equation of the linear system (\ref{eq:linearized}) is  
	\begin{equation} \label{eq:char0}
		\Delta(\lambda, \tau) = \det(\lambda I + A_1 + B_1 e^{-\tau \lambda}) = 0
	\end{equation}	
	That is,
	\begin{equation} \label{eq:chareqn1}
		\lambda ^2+\frac{7 \lambda  x_* e^{-\lambda  \tau +\frac{y_*}{20}-5} \left(20 (\alpha +\beta ) e^{\lambda  \tau }+20 \alpha +\beta  y_*\right)}{1000}+\frac{49 \alpha  \beta  x_*^2 e^{-\lambda  \tau +\frac{y_*}{10}-10} \left(20 e^{\lambda  \tau }+y_*+20\right)}{50000} = 0
	\end{equation}
	
	This could be also written as
	\begin{equation} \label{eq:charABCD1}
		\Delta(\lambda, \tau) = \lambda ^2 +  \left(A+B e^{-\lambda  \tau }\right) \lambda + \left(C + D e^{-\lambda  \tau }\right) = 0 
	\end{equation}
	where
	\begin{equation} \label{eq:ABCD1}
		\begin{aligned}
			A &= \frac{7}{50} \alpha  x_* e^{\frac{y_*}{20}-5}+\frac{7}{50} \beta  x_* e^{\frac{y_*}{20}-5}\\
			B &= \frac{7}{50} \alpha  x_* e^{\frac{y_*}{20}-5}+\frac{7 \beta  x_* e^{\frac{y_*}{20}-5} y_*}{1000} \\
			C &= \frac{49 \alpha  \beta  x_*^2 e^{\frac{y_*}{10}-10}}{2500}\\
			D &= \frac{49 \alpha  \beta  x_*^2 e^{\frac{y_*}{10}-10}}{2500}+\frac{49 \alpha  \beta  x_*^2 e^{\frac{y_*}{10}-10} y_*}{50000}		
		\end{aligned}
	\end{equation}

	\begin{itemize}
		\item For $\tau = 0$
	\end{itemize}
	The characteristic equation (\ref{eq:chareqn1}) simplifies to
	\begin{equation} \label{eq:}
		\lambda ^2+\frac{7 \lambda  x_* e^{\frac{y_*}{20}-5} \left(40 \alpha +\beta  \left(y_*+20\right)\right)}{1000}+\frac{49 \alpha  \beta  x_*^2 e^{\frac{y_*}{10}-10} \left(y_*+40\right)}{50000} = 0
	\end{equation}
	
	The coefficients in this equation are postivie, and therefore the roots have negative real parts.
	
	\begin{itemize}
		\item For $\tau > 0$
	\end{itemize}
	
	The change in stability of eigenvalue $\lambda$ can occur if $\text{Re}(\lambda) = 0.$ Let $\lambda = i \omega$ and characteristic equation (\ref{eq:charABCD1}) takes the form
	\begin{equation} \label{eq:chareqnw}
		-\omega ^2 + i \omega  \left(A+B e^{-i \tau  \omega }\right)+  \left(C+D e^{-i \tau  \omega }\right) = 0
	\end{equation}
	
	Solving for the real and imaginary parts of both sides, we get
	\begin{equation} \label{eq:reim1}
		\begin{aligned}
			C -\omega ^2 	&= - B \omega  \sin (\tau  \omega ) - D \cos (\tau  \omega )\\
			A \omega &=  D \sin (\tau  \omega ) - B \omega  \cos (\tau  \omega )
		\end{aligned}
	\end{equation}

	Squaring and adding (\ref{eq:reim1}), we get the relation
	\begin{equation} \label{eq:fomega1}
		f(\omega) = -\omega ^4 + \left(-A^2+B^2+2 C\right) \omega ^2  + \left(- C^2+D^2\right) = 0 
	\end{equation}

	Let 
	\begin{equation}
		M = -A^2+B^2+2 C = 	\frac{49 \beta  x_*^2 e^{\frac{y_*}{10}-10} \left(-400 \beta +40 \alpha  y_*+\beta  y_*^2\right)}{1000000}
	\end{equation}
	
	Then $M$ is positive if $40 \alpha  y_*+\beta  y_*^2 > 400 \beta$

	and let 
	\begin{equation}
		N = - C^2 + D^2 = \frac{2401 \alpha ^2 \beta ^2 x_*^4 e^{\frac{y_*}{5}-20} y_* \left(y_*+40\right)}{2500000000} > 0	
	\end{equation}
	
	Assuming that $Y= \omega^2$, we can write (\ref{eq:fomega1}) as
	\begin{equation} \label{eq:Phiy}
		\Phi(Y) = -Y^2 + M Y + N = 0 	
	\end{equation}

	This means that the Equation (\ref{eq:Phiy}) has one positive root. Solving for $\tau$ from (\ref{eq:reim1}), we have the critical curves given by

	
	\begin{multline} \label{eq:taustar}
		\tau_*(n) = \\
		\frac{2000 \sqrt{2} e^5 \pi  n}{7 \sqrt{\sqrt{N_3}+N_4}}\\
		\pm  
		\frac{1000 i \sqrt{2} e^5 \log \left(\frac{e^{-\frac{y_*}{20}} \left(\beta  N_2 x_*^2 e^{\frac{y_*}{10}} \left(40 \alpha +\beta  N_1\right)-20 i \sqrt{2} \sqrt{\sqrt{N_3}+N_4} x_* e^{\frac{y_*}{20}} (\alpha +\beta )+\sqrt{N_3}\right)}{x_* \left(40 \alpha  \beta  N_1 x_* e^{\frac{y_*}{20}}+i \sqrt{2} \sqrt{\sqrt{N_3}+N_4} \left(20 \alpha +\beta  y_*\right)\right)}\right)}{7 \sqrt{\sqrt{N_3}+N_4}}\\
		(n = 0, \pm1,\pm2,...)	
	\end{multline}
	where
	\begin{equation}
		\begin{aligned}
			N_1 &= (y_* + 20)\\
			N_2 &= (y_* - 20)\\
			N_3 &= \beta ^2 x_*^4 e^{\frac{y_*}{5}} \left(y_*+20\right) \left(3200 \alpha ^2 y_*+80 \alpha  \beta  \left(y_*-20\right) y_*+\beta ^2 \left(y_*-20\right){}^2 \left(y_*+20\right)\right)\\
			N_4 &= 	\beta  x_*^2 e^{\frac{y_*}{10}} \left(40 \alpha  y_*+\beta  \left(y_*^2-400\right)\right)	
		\end{aligned}
	\end{equation}
	
	Consequently, we can count that the stability region are restricted between the set of two curves
	\begin{multline} \label{eq:tau1}
		\tau_1(n) = \\
		\frac{2000 \sqrt{2} e^5 \pi  n}{7 \sqrt{\sqrt{N_3}+N_4}}\\
		+  
		\frac{1000 i \sqrt{2} e^5 \log \left(\frac{e^{-\frac{y_*}{20}} \left(\beta  N_2 x_*^2 e^{\frac{y_*}{10}} \left(40 \alpha +\beta  N_1\right)-20 i \sqrt{2} \sqrt{\sqrt{N_3}+N_4} x_* e^{\frac{y_*}{20}} (\alpha +\beta )+\sqrt{N_3}\right)}{x_* \left(40 \alpha  \beta  N_1 x_* e^{\frac{y_*}{20}}+i \sqrt{2} \sqrt{\sqrt{N_3}+N_4} \left(20 \alpha +\beta  y_*\right)\right)}\right)}{7 \sqrt{\sqrt{N_3}+N_4}}\\
		(n = 0, 1, 2,...)	
	\end{multline}
	
	\begin{multline} \label{eq:tau2}
		\tau_2(n) = \\
		\frac{2000 \sqrt{2} e^5 \pi  n}{7 \sqrt{\sqrt{N_3}+N_4}}\\
		-  
		\frac{1000 i \sqrt{2} e^5 \log \left(\frac{e^{-\frac{y_*}{20}} \left(\beta  N_2 x_*^2 e^{\frac{y_*}{10}} \left(40 \alpha +\beta  N_1\right)-20 i \sqrt{2} \sqrt{\sqrt{N_3}+N_4} x_* e^{\frac{y_*}{20}} (\alpha +\beta )+\sqrt{N_3}\right)}{x_* \left(40 \alpha  \beta  N_1 x_* e^{\frac{y_*}{20}}+i \sqrt{2} \sqrt{\sqrt{N_3}+N_4} \left(20 \alpha +\beta  y_*\right)\right)}\right)}{7 \sqrt{\sqrt{N_3}+N_4}}\\
		(n =  1, 2,...)	
	\end{multline}
	
	Notice that Equation (\ref{eq:tau1}) starts with $n = 0, 1,2,...$ and Equation (\ref{eq:tau2}) starts with $n = 1, 2, ...$ for the pair of curves to have positive values of $\tau.$ If $\mathrm{Re}\left(d\lambda/d\tau\right)$ have different sign on any two consecutive critical curves, then the stability region is confined between these two curves in the $(\tau, \alpha, \beta)$ parameter space \cite{lakshmanan2011dynamics}.

	Now we look to verify the transversality condition:
	\begin{equation}
		\text{Re} \left( \frac{\mathrm{d} \lambda}{\mathrm{d} \tau} \right) > 0
	\end{equation}
	
	at $\tau = \tau_* $ with $ n = 0.$
	
	Differentiating characteristic equation (\ref{eq:charABCD1}) with respect to $\tau$, we get
	\begin{equation}
		2 \lambda  \frac{\mathrm{d} \lambda}{\mathrm{d} \tau} + \left(A + B e^{-\lambda  \tau }\right) \frac{\mathrm{d} \lambda}{\mathrm{d} \tau} + B e^{-\lambda  \tau } \lambda \left(-\lambda - \tau \frac{\mathrm{d} \lambda}{\mathrm{d} \tau}\right) +  D e^{-\lambda  \tau }  \left(-\lambda - \tau \frac{\mathrm{d} \lambda}{\mathrm{d} \tau}\right) = 0
	\end{equation}
	Solving for $	\left[\mathrm{d} \lambda / \mathrm{d} \tau \right]^{-1}, $ we get
	\begin{equation}
		\begin{aligned}
			\left[ \frac{\mathrm{d} \lambda}{\mathrm{d} \tau}  \right]^{-1} &= 	\frac{(A+2 \lambda ) e^{\lambda  \tau }}{\lambda  (B \lambda +D)}-\frac{B \lambda  \tau -B+D \tau }{\lambda  (B \lambda +D)}\\
			&= -\frac{A D-B C+B \lambda ^2+2 D \lambda }{\lambda  \left(A \lambda +C+\lambda ^2\right) (B \lambda +D)}-\frac{\tau }{\lambda }
		\end{aligned}
	\end{equation}
	
	On critical curves i.e with $\tau = \tau_*$ and $\lambda = i \omega$,  and solving for the real part, we have
	
	\begin{equation}
		\begin{aligned}
			\text{Re} \left(\left[ \frac{\mathrm{d} \lambda}{\mathrm{d} \tau}  \right]^{-1}\right) &= \frac{A^2-2 C+2 \omega ^2}{A^2 \omega ^2+\left(C-\omega ^2\right)^2}-\frac{B^2}{B^2 \omega ^2+D^2}\\
			&= \frac{A^2-2 C+2 \omega ^2}{B^2 \omega ^2+D^2}-\frac{B^2}{B^2 \omega ^2+D^2}\\
			&= \frac{A^2-B^2-2 C+2 \omega ^2}{B^2 \omega ^2+D^2} \\
			&= \frac{-C^2+D^2+\omega ^4}{\omega ^2 \left(B^2 \omega ^2+D^2\right)}\\
			&= \frac{N+\omega ^4}{\omega ^2 \left(B^2 \omega ^2+D^2\right)} > 0, \quad \textrm{since } N > 0
		\end{aligned}
	\end{equation}

	Since $\mathrm{Re}\left(d\lambda/d\tau\right) > 0 $ for all the critical curves (\ref{eq:tau1}) and (\ref{eq:tau2}), the corresponding slopes have positive values on all the stability determining critical curves. Thus, there are no  eigenvalues with negative real part across the critical curves. Further, we know that for $\tau = 0$ the equilibrium points $(x_*, y_*)$ are stable. Therefore, there can be only one stable region in the $(\tau, \alpha)$
	or $(\tau, \beta)$ plane enclosed between the line $\tau = 0$ and the curve $\tau_1(0).$

	The critical curves for various $n$ as a function of $\alpha$ and $\beta$ are shown in Figures \ref{fig:cc_beta0p8} and \ref{fig:cc_alpha0p5}.
	\begin{figure}[H]
		\centering
		\includegraphics[width=0.8\linewidth]{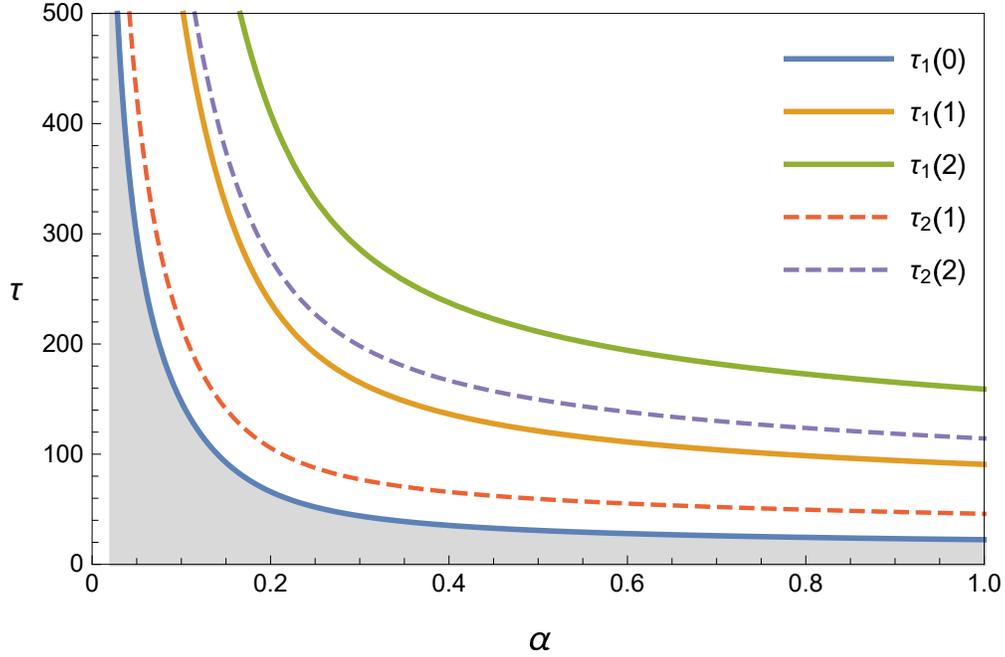}
		\caption{Critical curves for  equilibrium  $ (x_*,y_*) $ with $\beta = 0.8. $ The solid curves represents $\tau_1$ for $n = 0, +1, +2$ and dashed curves represent $\tau_2$ for $n = +1, +2.$ The region enclosed (shaded region) between the line $\tau = 0$ and the curve $\tau = \tau_1(0)$ is the only stable region.}
		\label{fig:cc_beta0p8}
	\end{figure}
	
	\begin{figure}[H]
		\centering
		\includegraphics[width=0.8\linewidth]{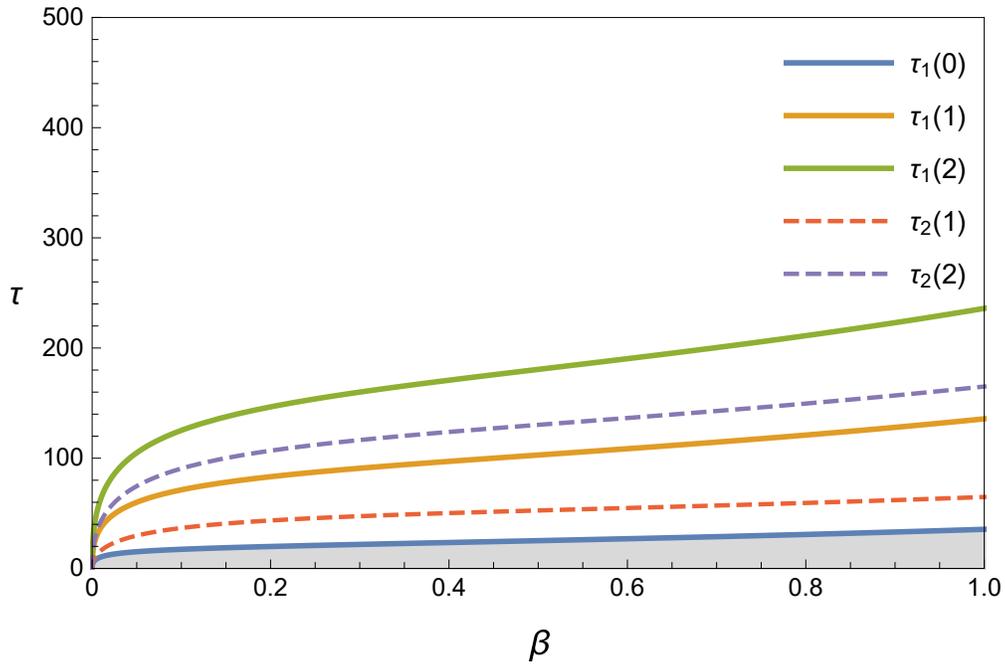}
		\caption{Critical curves for  equilibrium  $ (x_*,y_*) $ with $\alpha = 0.5. $ The solid curves represents $\tau_1$ for $n = 0, +1, +2$ and dashed curves represent $\tau_2$ for $n = +1, +2.$ The region enclosed (shaded region) between the line $\tau = 0$ and the curve $\tau = \tau_1(0)$ is the only stable region.}
		\label{fig:cc_alpha0p5}
	\end{figure}	
	
	The critical surfaces  in the parameter space $(\alpha, \beta, \tau)$ that encompass the stable region is shown in Figure \ref{fig:ccs_tauvsalphavsbeta}.
	
	\begin{figure}[H]
		\centering
		\includegraphics[width=0.7\linewidth]{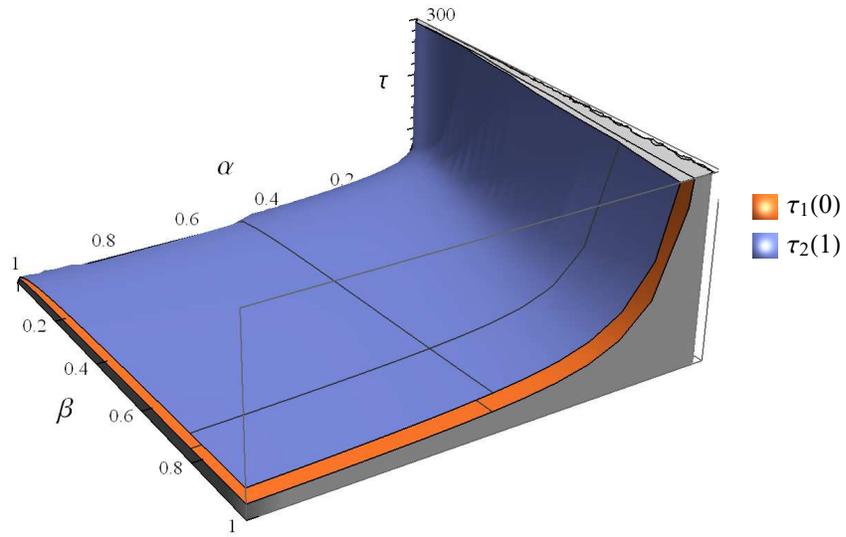}
		\caption{Critical surfaces of the two state human respiratory system.}
		\label{fig:ccs_tauvsalphavsbeta}
	\end{figure}
	\begin{figure}[H]
		\centering
		\includegraphics[width=0.7\linewidth]{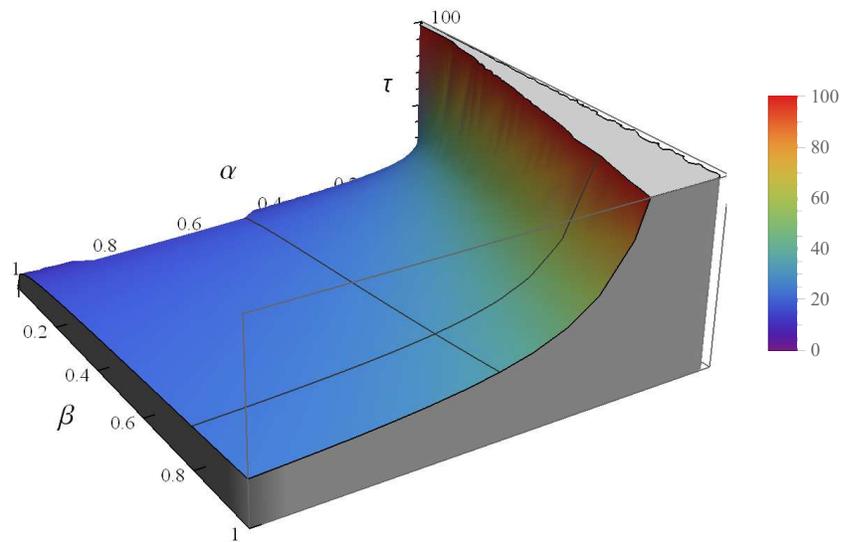}
		\caption{Stability chart of the two state human respiratory system.}
		\label{fig:sr_tauvsalphavsbeta}
	\end{figure}
	
		The 3 dimensional stability chart of the two  state model of a human respiratory system in the parameter space $(\alpha, \beta, \tau)$ is shown in Figure \ref{fig:sr_tauvsalphavsbeta}.
	
	%

	For a more general case, the following theorem was proved in \cite{cooke1994stability}.
	\begin{theorem}
		Let $V_{*} = V(x_*, y_*), $ $V_{x*} = V_{x}(x_*, y_*) $ and  $V_{y*} = V_{y}(x_*, y_*). $ 
		\begin{enumerate}
			\item If $V_{*} \geq x_{*}V_{x*} +  y_{*}V_{y*}, $ then the equilibrium $(x_*, y_*)$ is asymptotically stable for all delay $\tau \geq 0.$
			\item If $V_{*} < x_{*}V_{x*} +  y_{*}V_{y*}, $ then there exists $\tau_{*} > 0 $ such that the  equilibrium $(x_*, y_*)$ is asymptotically stable if $0 \leq \tau < \tau_* $ and unstable if $\tau > \tau_{*}.$ 
		\end{enumerate}
	\end{theorem}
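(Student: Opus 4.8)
The plan is to control the roots of the characteristic quasi-polynomial attached to the general linearization (\ref{eq:linearizeda}), namely $\Delta(\lambda,\tau)=\lambda^2+\bigl(A+Be^{-\lambda\tau}\bigr)\lambda+\bigl(C+De^{-\lambda\tau}\bigr)$ as in (\ref{eq:charABCD1}), now with $V$ arbitrary. First I would record the general forms of the coefficients: expanding $\det(\lambda I+A_1+B_1e^{-\lambda\tau})$, the $e^{-2\lambda\tau}$ contributions cancel and
\[
A=(\alpha+\beta)V_*,\qquad B=\alpha x_*V_{x*}+\beta y_*V_{y*},\qquad C=\alpha\beta V_*^2,\qquad D=\alpha\beta V_*\bigl(x_*V_{x*}+y_*V_{y*}\bigr).
\]
Since $V,V_x,V_y>0$ and $x_*,y_*>0$, all four are positive; in particular $A+B>0$ and $C+D=\alpha\beta V_*\bigl(V_*+x_*V_{x*}+y_*V_{y*}\bigr)>0$, so the $\tau=0$ equation $\lambda^2+(A+B)\lambda+(C+D)=0$ satisfies the Routh--Hurwitz criterion and $(x_*,y_*)$ is asymptotically stable at $\tau=0$ in both cases.

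Since $\Delta$ is of retarded type (no delay multiplies $\lambda^2$), its roots depend continuously on $\tau$ and only finitely many lie in any right half-plane, so stability can be lost only when a root reaches the imaginary axis. As $\Delta(0,\tau)=C+D>0$, the value $\lambda=0$ is never a root; putting $\lambda=i\omega$ with $\omega>0$ reproduces (\ref{eq:reim1}), and squaring and adding yields (\ref{eq:fomega1}), equivalently $\Phi(Y)=-Y^2+MY+N=0$ with $Y=\omega^2$ as in (\ref{eq:Phiy}), where $M=-A^2+B^2+2C$ and $N=-C^2+D^2$. Two identities then do the work: from the coefficient formulas, $A^2-2C=(\alpha^2+\beta^2)V_*^2$, so that $M=B^2-(\alpha^2+\beta^2)V_*^2$; and $N=D^2-C^2=\alpha^2\beta^2V_*^2\bigl((x_*V_{x*}+y_*V_{y*})^2-V_*^2\bigr)$, so that the sign of $N$ equals the sign of $x_*V_{x*}+y_*V_{y*}-V_*$.

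For part (1), assume $V_*\ge x_*V_{x*}+y_*V_{y*}$; then $N\le0$. Writing $p=x_*V_{x*}\ge0$ and $q=y_*V_{y*}\ge0$, the Cauchy--Schwarz inequality together with $(p+q)^2\ge p^2+q^2$ gives $B^2=(\alpha p+\beta q)^2\le(\alpha^2+\beta^2)(p^2+q^2)\le(\alpha^2+\beta^2)(p+q)^2\le(\alpha^2+\beta^2)V_*^2$, hence $M\le0$. Therefore $\Phi(Y)=-Y^2+MY+N<0$ for every $Y>0$, so $\Delta$ has no purely imaginary root for any $\tau\ge0$; together with asymptotic stability at $\tau=0$ and the continuity of the root set, $(x_*,y_*)$ is asymptotically stable for all $\tau\ge0$.

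For part (2), assume $V_*<x_*V_{x*}+y_*V_{y*}$, so $N>0$; then $\Phi(0)=N>0$ while $\Phi(Y)\to-\infty$, and since the product of the two roots of $\Phi$ equals $-N<0$ there is exactly one positive root $Y_0=\omega_0^2$, so the only possible purely imaginary roots of $\Delta$ are $\pm i\omega_0$. Solving (\ref{eq:reim1}) for $\tau$ at $\omega=\omega_0$ produces a discrete family $\tau_*(n)=\tau_*+2\pi n/\omega_0$, $n=0,1,2,\dots$, with $\tau_*:=\tau_*(0)>0$ (strictly positive, since otherwise $i\omega_0$ would be a root of the stable $\tau=0$ polynomial). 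The transversality computation carried out earlier in this section---which uses only $N>0$---gives $\operatorname{Re}\bigl([\mathrm{d}\lambda/\mathrm{d}\tau]^{-1}\bigr)\big|_{i\omega_0}=(N+\omega_0^4)/\bigl(\omega_0^2(B^2\omega_0^2+D^2)\bigr)>0$, hence $\operatorname{Re}(\mathrm{d}\lambda/\mathrm{d}\tau)>0$ at each $\tau_*(n)$: a conjugate pair of roots crosses into the open right half-plane at every $\tau_*(n)$ and never returns. Consequently the number of roots with positive real part is $0$ for $0\le\tau<\tau_*$ and at least $2$ for $\tau>\tau_*$, which is exactly statement (2). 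I expect the two delicate points to be the two-step Cauchy--Schwarz estimate forcing $M\le0$ in part (1)---the hypothesis $N\le0$ alone does not exclude a positive root of $\Phi$---and, in part (2), the observation that every crossing occurs at the single frequency $\pm\omega_0$ and in the same direction, which is what upgrades the single crossing at $\tau_*$ into instability for all $\tau>\tau_*$.
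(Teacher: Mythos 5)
Your argument is correct, but it is worth noting that the paper itself does not prove this theorem at all: it is quoted as a result established in the cited reference (Cooke--Turi), and the paper only reproduces the case-(2) machinery for the one concrete ventilation function $V(x,y)=0.14e^{-0.05(100-y)}x$, for which $x_*V_{x*}=V_*$ forces $N>0$ so that case (1) never arises. Your general coefficient formulas $A=(\alpha+\beta)V_*$, $B=\alpha x_*V_{x*}+\beta y_*V_{y*}$, $C=\alpha\beta V_*^2$, $D=\alpha\beta V_*(x_*V_{x*}+y_*V_{y*})$ specialize exactly to the paper's (\ref{eq:ABCD1}), and your part (2) is the same crossing-frequency argument the paper runs: a unique positive root of $\Phi$, solvability of (\ref{eq:reim1}) for $\tau$, and the transversality sign $\operatorname{Re}([\mathrm{d}\lambda/\mathrm{d}\tau]^{-1})=(N+\omega^4)/(\omega^2(B^2\omega^2+D^2))>0$, which indeed only uses $N>0$. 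What your write-up adds, and what the paper nowhere supplies, is the delay-independent stability half: you correctly identify that $N\le 0$ alone does not rule out positive roots of $\Phi(Y)=-Y^2+MY+N$ (if $M>0$ and $M^2+4N>0$ there are two), and your chain $B^2=(\alpha p+\beta q)^2\le(\alpha^2+\beta^2)(p^2+q^2)\le(\alpha^2+\beta^2)(p+q)^2\le(\alpha^2+\beta^2)V_*^2$ together with the identity $A^2-2C=(\alpha^2+\beta^2)V_*^2$ is a clean way to force $M\le 0$. Combined with $\Delta(0,\tau)=C+D>0$ and the standard retarded-type continuity argument, this closes part (1). The one point I would state more explicitly is that $f(\omega_0)=0$ is not merely necessary but sufficient for a crossing, because the linear system (\ref{eq:reim1}) for $(\sin\tau\omega_0,\cos\tau\omega_0)$ has determinant $B^2\omega_0^2+D^2>0$ and its solution satisfies $\sin^2+\cos^2=1$ precisely when $f(\omega_0)=0$; this is what guarantees that the $\tau_*(n)$ in part (2) actually exist.
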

	
	Therefore, from the above discussions, the following results can be directly deduced for our case. 
	
	Let  $\tau_*$ be defined by (\ref{eq:taustar}). Then,
	\begin{enumerate}[label=(\roman*)]
		\item The positive equilibrium  $E_{*}(x_{*}, y_{*})$ of system (\ref{eq:sys1}) is asymptotically stable for $0 \leq \tau < \tau_*$
		\item The positive equilibrium  $E_{*}(x_{*}, y_{*})$ of system (\ref{eq:sys1}) is  unstable for $ \tau > \tau_*$
		\item  System (\ref{eq:sys1}) undergoes Hopf bifurcation at the positive equilibrium  $E_{*}(x_{*}, y_{*})$ for $\tau = \tau_*$
	\end{enumerate}

	\subsection{Critical Delay and Bifurcation}
	
	If we consider $\tau$ as a parameter, then as $\tau$ passes through its critical value $\tau_*$, the positive equilibrium $E_{*}(x_{*}, y_{*})$ loses its stability. The maximum value of the real part of the characteristic equation is computed for several values of $\tau.$ This is shown in figure \ref{fig:maxroots_alpha0p5_beta0p8}. For the default values of $\alpha = 0.5$, $\beta = 0.8$, the equilibrium is $E_{*}(x_{*}, y_{*}) \approx (29.1842, 18.2401 )$ and   the critical delay is $\tau_* \approx 30.8017.$ 
	
	\begin{figure}[H]
		\centering
		\includegraphics[width=1\textwidth]{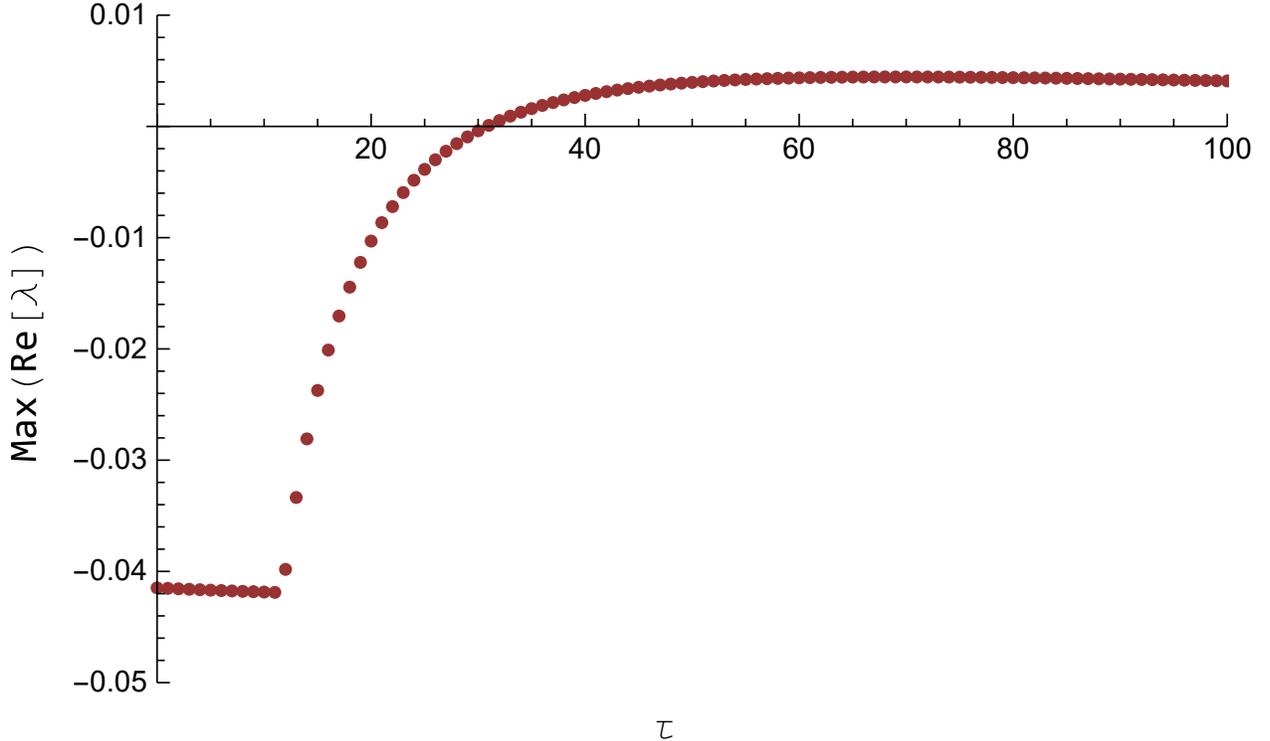}
		\caption{The maximum value of the real part of the eigenvalue with  $\alpha = 0.5, \; \beta = 0.8. $ }
		\label{fig:maxroots_alpha0p5_beta0p8}
	\end{figure}

	Table \ref{table:maxreal1} lists the largest real part of the characteristic root computed for several values of $\tau$ near the critical delay with  $\alpha = 0.5$ and $\beta = 0.8.$
	
	\begin{table}[H]
		\begin{center}
			\caption{The largest real part of the eigenvalues with $\alpha = 0.5$ and $\beta = 0.8$}
			\label{table:maxreal1}
			\begin{tabular}{cr}
				\hline
				$\tau$ & Max(Re[$\lambda$])\\
				\hline
				25 & -0.00386067 \\
				26 & -0.0029966 \\
				27 & -0.00222993 \\
				28 & -0.00154774 \\
				29 & -0.000939186 \\
				30 & -0.000395051 \\
				31 & 0.0000925033 \\
				32 & 0.000530187 \\
				33 & 0.000923769 \\
				34 & 0.00127823 \\
				35 & 0.00159789 \\			
				\hline
			\end{tabular}
		\end{center}
	\end{table}
	
	Figure \ref{fig:max_roots_parameters_vary} shows the maximum value of the real part of the characteristic equation for varying the parameters $\alpha$ and $\beta.$

	\begin{figure}[H]
		\centering
		\begin{subfigure}[b]{0.3\textwidth}
			\centering
			\includegraphics[width=1\textwidth]{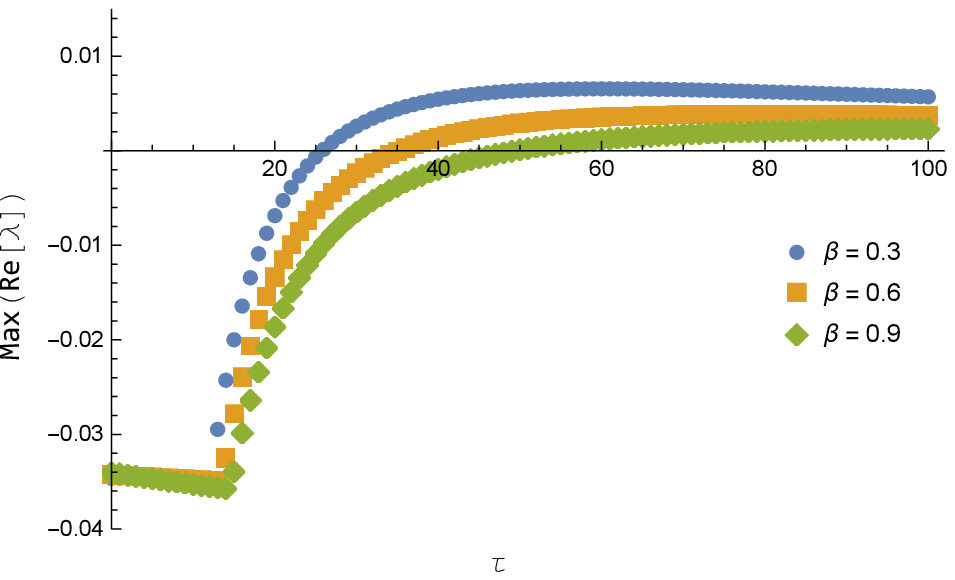}
			\caption{$\alpha=0.3$}
			\label{fig:maxroots_betaVary_alpha0p2}
		\end{subfigure}
		\hfill
		\begin{subfigure}[b]{0.3\textwidth}
			\centering
			\includegraphics[width=1\textwidth]{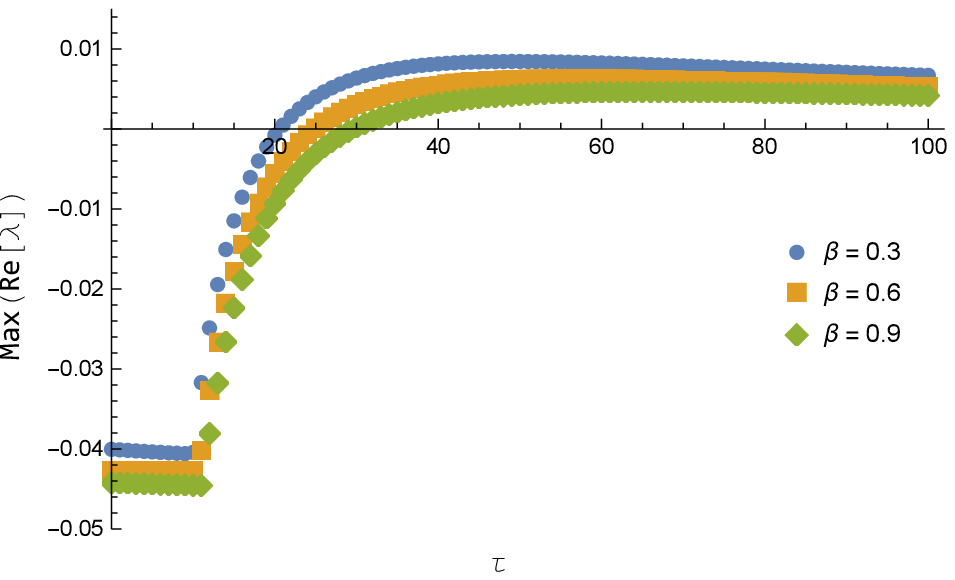}
			\caption{$\alpha=0.6$}
			\label{fig:maxroots_betaVary_alpha0p5}
		\end{subfigure}
		\hfill
		\begin{subfigure}[b]{0.3\textwidth}
			\centering
			\includegraphics[width=1\textwidth]{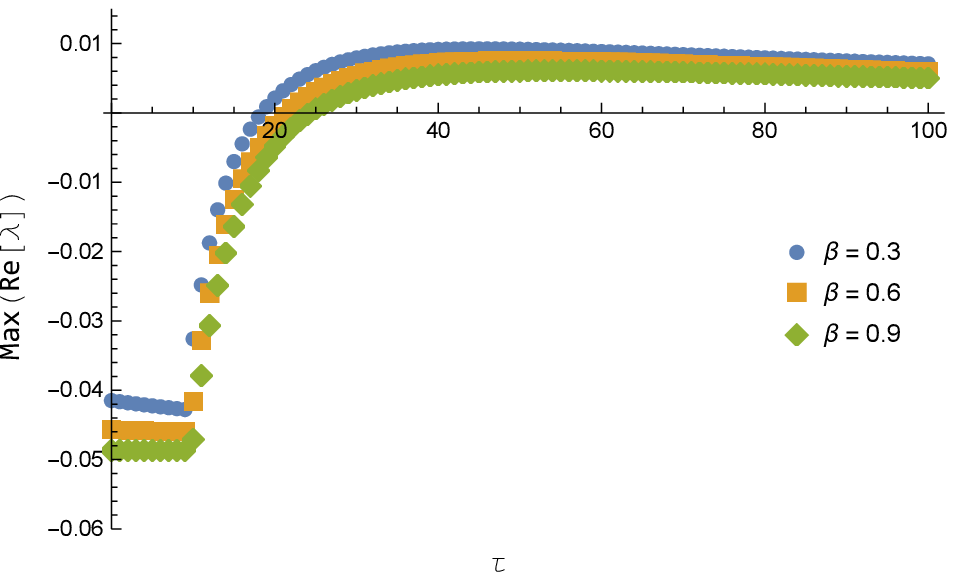}
			\caption{$\alpha=0.9$}
			\label{fig:maxroots_betaVary_alpha0p8}
		\end{subfigure}
		\caption{The maximum value of the real part of the eigenvalue with various values of $\alpha$ and $\beta.$}
		\label{fig:max_roots_parameters_vary}
	\end{figure}

	The following tables list out the largest part of the eigenvalues with various combination of $\alpha$ and $\beta$ near the critical delay.
	
	\begin{table}[H]
		\begin{center}
			\caption{The largest real part of the eigenvalues with $\alpha = 0.3$}
			\label{table:maxreal_alpha0p3}
			\begin{tabular}{ccrccrccr}
				\hline
				$\beta$ & $\tau$ & Max(Re[$\lambda$]) &  $\beta$ & $\tau$ & Max(Re[$\lambda$]) & $\beta$ & $\tau$ & Max(Re[$\lambda$])\\
				\hline
				0.3 & 20 & -0.00686887 & 0.6 & 30 & -0.00237351 & 0.9 & 45 & -0.00076911 \\ 
				0.3 & 21 & -0.00526380 & 0.6 & 31 & -0.00181421 & 0.9 & 46 & -0.00057352 \\ 
				0.3 & 22 & -0.00387139 & 0.6 & 32 & -0.00130903 & 0.9 & 47 & -0.00039104 \\ 
				0.3 & 23 & -0.00265816 & 0.6 & 33 & -0.00085181 & 0.9 & 48 & -0.00022063 \\ 
				0.3 & 24 & -0.00159690 & 0.6 & 34 & -0.00043725 & 0.9 & 49 & -0.00006136 \\ 
				0.3 & 25 & -0.00066533 & 0.6 & 35 & -0.00006074 & 0.9 & 50 & 0.00008762 \\ 
				0.3 & 26 & 0.00015497 & 0.6 & 36 & 0.00028174 & 0.9 & 51 & 0.00022707 \\ 
				0.3 & 27 & 0.00087928 & 0.6 & 37 & 0.00059370 & 0.9 & 52 & 0.00035769 \\
				0.3 & 28 & 0.00152043 & 0.6 & 38 & 0.00087822 & 0.9 & 53 & 0.00048012 \\
				0.3 & 29 & 0.00208920 & 0.6 & 39 & 0.00113802 & 0.9 & 54 & 0.00059495 \\ 
				0.3 & 30 & 0.00259473 & 0.6 & 40 & 0.00137550 & 0.9 & 55 & 0.00070271 \\ 
				\hline
			\end{tabular}
		\end{center}
	\end{table}

	\begin{table}[H]
		\begin{center}
			\caption{The largest real part of the eigenvalues with $\alpha = 0.6$}
			\label{table:maxreal_alpha0p6}
			\begin{tabular}{ccrccrccr}
				\hline
				$\beta$ & $\tau$ & Max(Re[$\lambda$]) &  $\beta$ & $\tau$ & Max(Re[$\lambda$]) & $\beta$ & $\tau$ & Max(Re[$\lambda$])\\
				\hline
				0.3 & 15 & -0.01148690 & 0.6 & 20 & -0.00560191 & 0.9 & 25 & -0.00312474 \\ 
				0.3 & 16 & -0.00853185 & 0.6 & 21 & -0.00412443 & 0.9 & 26 & -0.00229615 \\ 
				0.3 & 17 & -0.00607124 & 0.6 & 22 & -0.00284708 & 0.9 & 27 & -0.00156234 \\
				0.3 & 18 & -0.00400648 & 0.6 & 23 & -0.00173803 & 0.9 & 28 & -0.00091068 \\ 
				0.3 & 19 & -0.00226226 & 0.6 & 24 & -0.00077144 & 0.9 & 29 & -0.00033055 \\ 
				0.3 & 20 & -0.00078019 & 0.6 & 25 & 0.00007382 & 0.9 & 30 & 0.00018705 \\ 
				0.3 & 21 & 0.00048556 & 0.6 & 26 & 0.00081518 & 0.9 & 31 & 0.00064978 \\ 
				0.3 & 22 & 0.00157136 & 0.6 & 27 & 0.00146711 & 0.9 & 32 & 0.00106420 \\ 
				0.3 & 23 & 0.00250636 & 0.6 & 28 & 0.00204171 & 0.9 & 33 & 0.00143594 \\ 
				0.3 & 24 & 0.00331419 & 0.6 & 29 & 0.00254916 & 0.9 & 34 & 0.00176985 \\ 
				0.3 & 25 & 0.00401410 & 0.6 & 30 & 0.00299806 & 0.9 & 35 & 0.00207014 \\
				\hline
			\end{tabular}
		\end{center}
	\end{table}

	\begin{table}[H]
		\begin{center}
			\caption{The largest real part of the eigenvalues with $\alpha = 0.9$}
			\label{table:maxreal_alpha0p9}
			\begin{tabular}{ccrccrccr}
				\hline
				$\beta$ & $\tau$ & Max(Re[$\lambda$]) &  $\beta$ & $\tau$ & Max(Re[$\lambda$]) & $\beta$ & $\tau$ & Max(Re[$\lambda$])\\
				\hline
				0.3 & 13 & -0.01396730 & 0.6 & 16 & -0.00957134 & 0.9 & 20 & -0.00478992 \\
				0.3 & 14 & -0.01012840 & 0.6 & 17 & -0.00709898 & 0.9 & 21 & -0.00339803 \\ 
				0.3 & 15 & -0.00701248 & 0.6 & 18 & -0.00502401 & 0.9 & 22 & -0.00219773 \\ 
				0.3 & 16 & -0.00445748 & 0.6 & 19 & -0.00327084 & 0.9 & 23 & -0.00115831 \\ 
				0.3 & 17 & -0.00234397 & 0.6 & 20 & -0.00178089 & 0.9 & 24 & -0.00025488 \\ 
				0.3 & 18 & -0.00058240 & 0.6 & 21 & -0.00050815 & 0.9 & 25 & 0.00053292 \\ 
				0.3 & 19 & 0.00089540 & 0.6 & 22 & 0.00058388 & 0.9 & 26 & 0.00122183 \\ 
				0.3 & 20 & 0.00214212 & 0.6 & 23 & 0.00152449 & 0.9 & 27 & 0.00182576 \\ 
				0.3 & 21 & 0.00319896 & 0.6 & 24 & 0.00233739 & 0.9 & 28 & 0.00235633 \\ 
				0.3 & 22 & 0.00409852 & 0.6 & 25 & 0.00304190 & 0.9 & 29 & 0.00282329 \\ 
				0.3 & 23 & 0.00486685 & 0.6 & 26 & 0.00365396 & 0.9 & 30 & 0.00323487 \\ 
				\hline
			\end{tabular}
		\end{center}
	\end{table}

	We now list the equilibrium point   $(x_*,y_*)$ and critical delay $\tau_*$ for these combinations of $\alpha$ and $\beta.$

	\begin{table}[H]
		\begin{center}
			\caption{The equilibrium point and critical delay for various values of  $\alpha$ and $\beta$}
			\label{table:eq_delay}
			\begin{tabular}{cccc}
				\hline
				$\alpha$ & $\beta$ & $(x_*, y_*)$ & $\tau_*$ \\
				\hline
				0.3 & 0.3 & (28.8782, 28.8782) & 25.8012 \\ 
				0.3 & 0.6 & (37.2949, 18.6474) & 35.1706 \\ 
				0.3 & 0.9 & (41.9183, 13.9728) & 49.4039 \\ 
				0.6 & 0.3 & (17.5118, 35.0237) & 20.5978 \\ 
				0.6 & 0.6 & (23.4108,  23.4108) & 24.9072 \\ 
				0.6 & 0.9 & (26.8631,  17.9087) & 29.6255 \\ 
				0.9 & 0.3 & (12.9723,  38.9169) & 18.3737 \\ 
				0.9 & 0.6 & (17.6832, 26.5248) & 21.4466 \\ 
				0.9 & 0.9 & (20.5381, 20.5381) & 24.3089 \\ 
				\hline
			\end{tabular}
		\end{center}
	\end{table}

	In figure \ref{fig:lambda_tauVary}, the characteristic roots of the smallest modulus are  shown for the default values of $\alpha = 0.5,$ and $\beta = 0.8$ while varying the parameter $\tau$ near the critical delay.

	\begin{figure}[H]
		\centering
		\begin{subfigure}[b]{0.3\textwidth}
			\centering
			\includegraphics[width=1\textwidth]{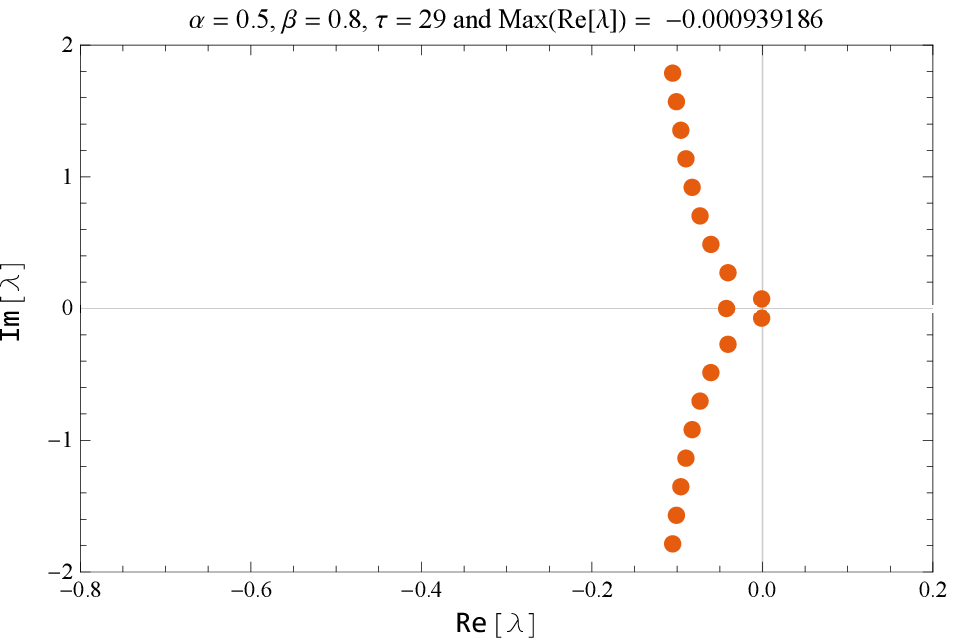}		
			\label{fig:lambda_a0p5b0p8t29}
		\end{subfigure}
		\hfill
		\begin{subfigure}[b]{0.3\textwidth}
			\centering
			\includegraphics[width=1\textwidth]{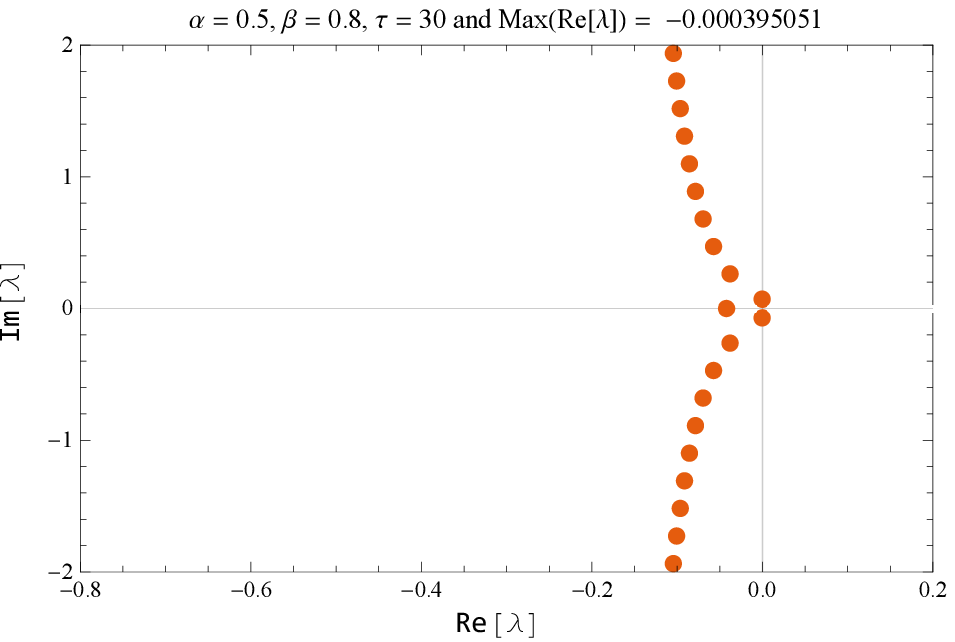}	
			\label{fig:lambda_a0p5b0p8t30}
		\end{subfigure}
		\hfill
		\begin{subfigure}[b]{0.3\textwidth}
			\centering
			\includegraphics[width=1\textwidth]{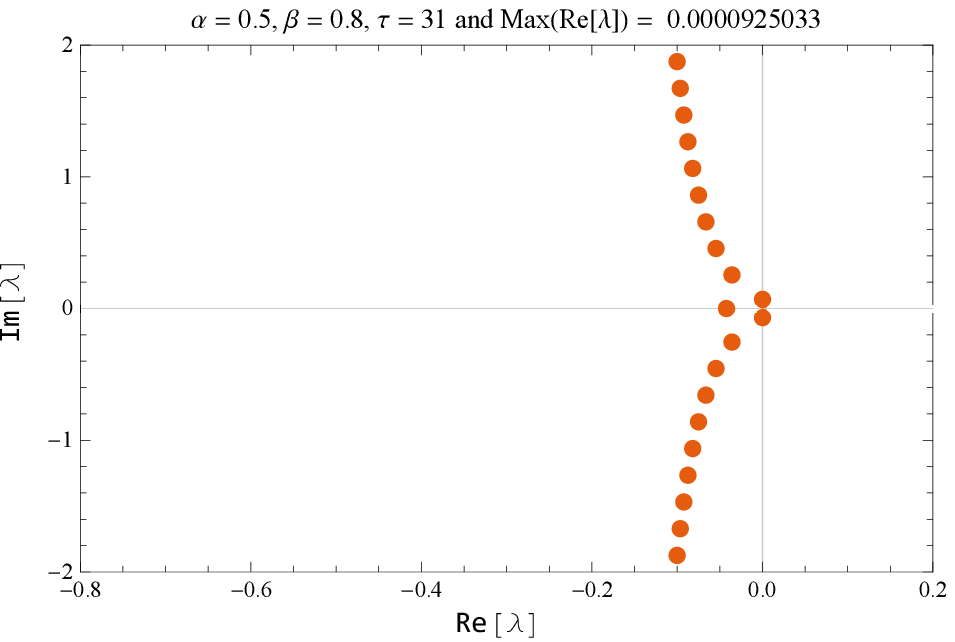}
			\label{fig:lambda_a0p5b0p8t31}
		\end{subfigure}
		
		\caption{The characteristic roots of the smallest modulus with various values of  $\alpha, $  $\beta, $ and $\tau$ near the critical delay.}
		\label{fig:lambda_tauVary}
	\end{figure}

	Figure \ref{fig:lambda_parameters_vary} shows the characteristic roots of the smallest modulus for various parameters of $\alpha,$  $\beta$ and $\tau.$

	\begin{figure}[H]
		\centering
		\begin{subfigure}[b]{0.3\textwidth}
			\centering
			\includegraphics[width=1\textwidth]{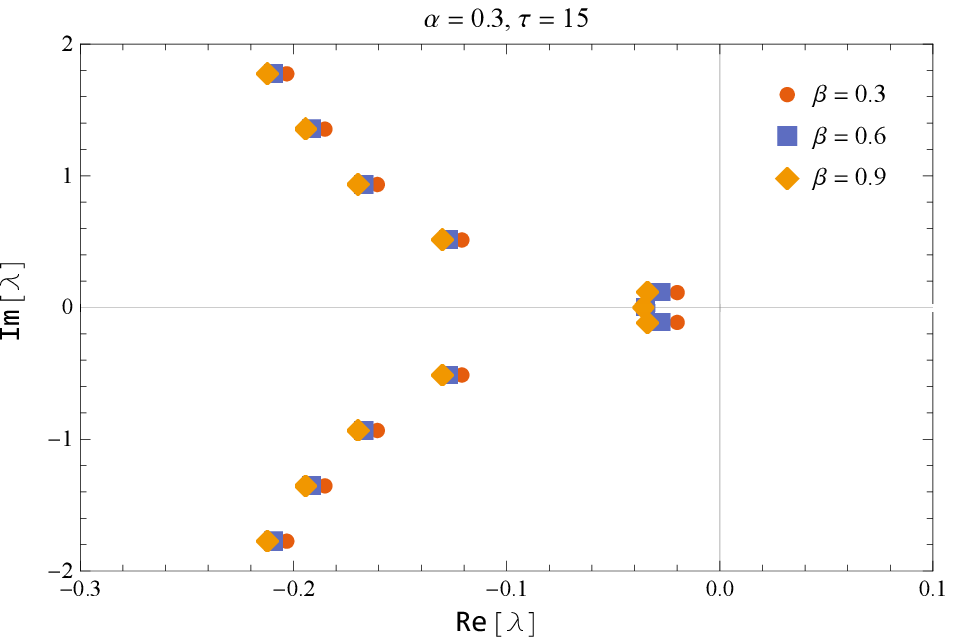}		
			\label{fig:lambda_a0p3t15_betaVary}
		\end{subfigure}
		\hfill
		\begin{subfigure}[b]{0.3\textwidth}
			\centering
			\includegraphics[width=1\textwidth]{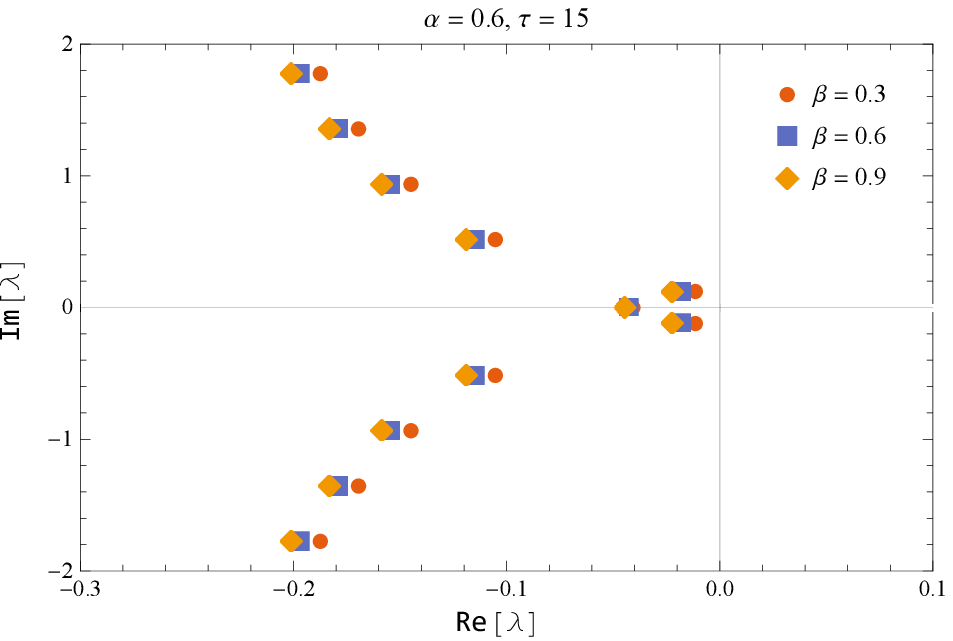}	
			\label{fig:lambda_a0p6t15_betaVary}
		\end{subfigure}
		\hfill
		\begin{subfigure}[b]{0.3\textwidth}
			\centering
			\includegraphics[width=1\textwidth]{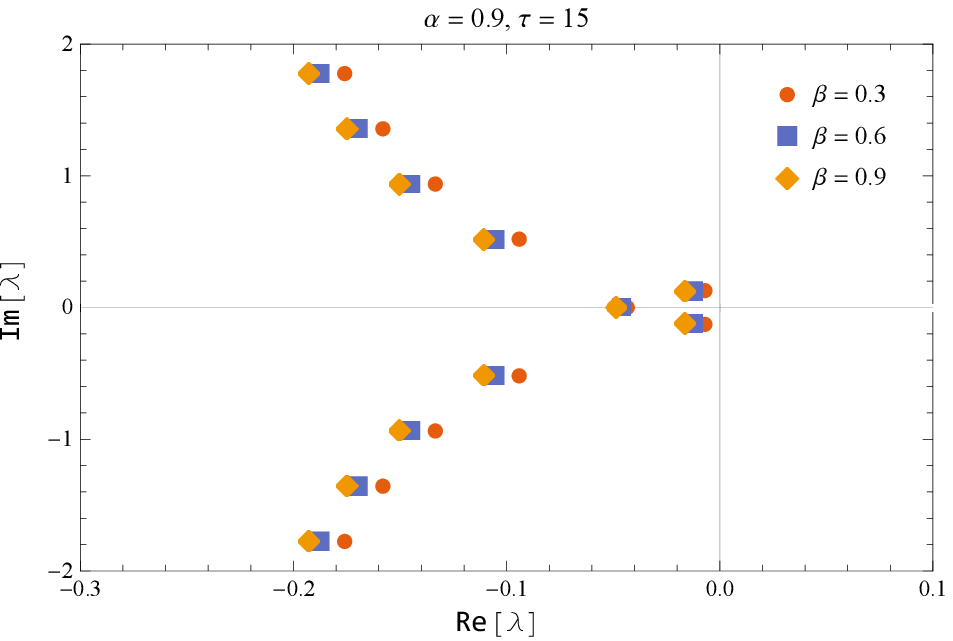}
			\label{fig:lambda_a0p9t15_betaVary}
		\end{subfigure}
		\begin{subfigure}[b]{0.3\textwidth}
			\centering
			\includegraphics[width=1\textwidth]{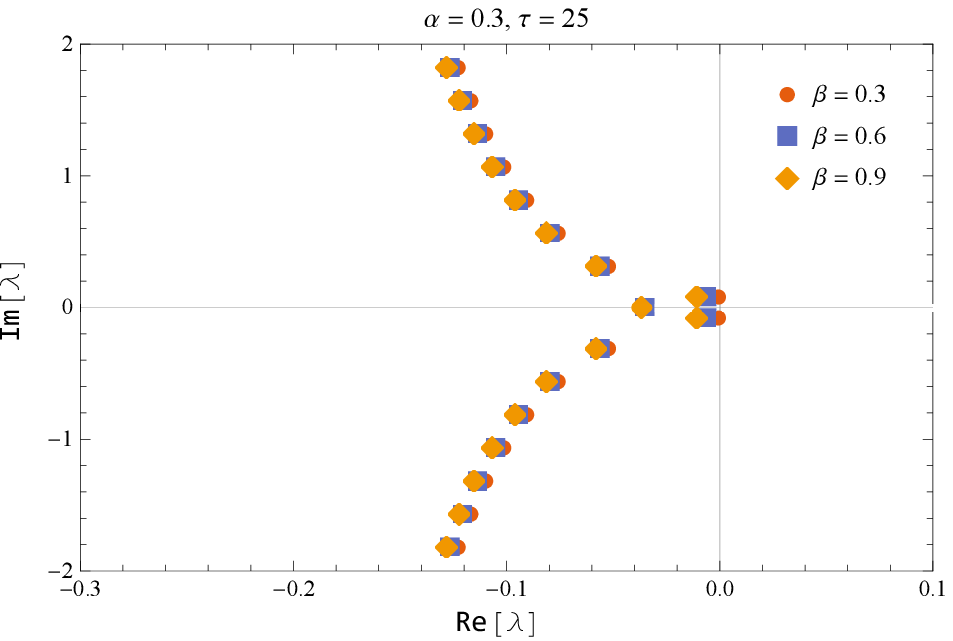}		
			\label{fig:lambda_a0p3t25_betaVary}
		\end{subfigure}
		\hfill
		\begin{subfigure}[b]{0.3\textwidth}
			\centering
			\includegraphics[width=1\textwidth]{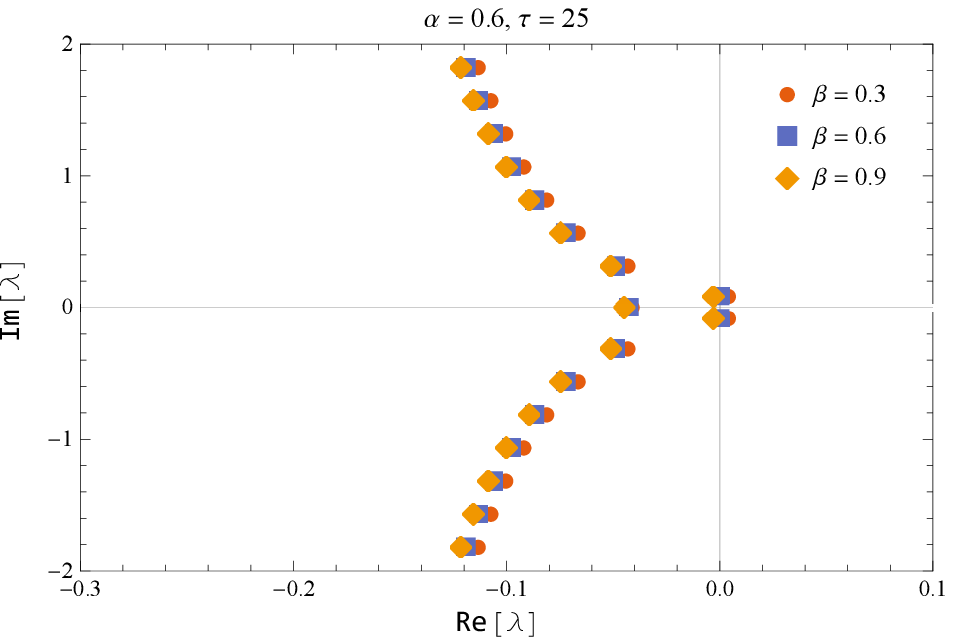}	
			\label{fig:lambda_a0p6t25_betaVary}
		\end{subfigure}
		\hfill
		\begin{subfigure}[b]{0.3\textwidth}
			\centering
			\includegraphics[width=1\textwidth]{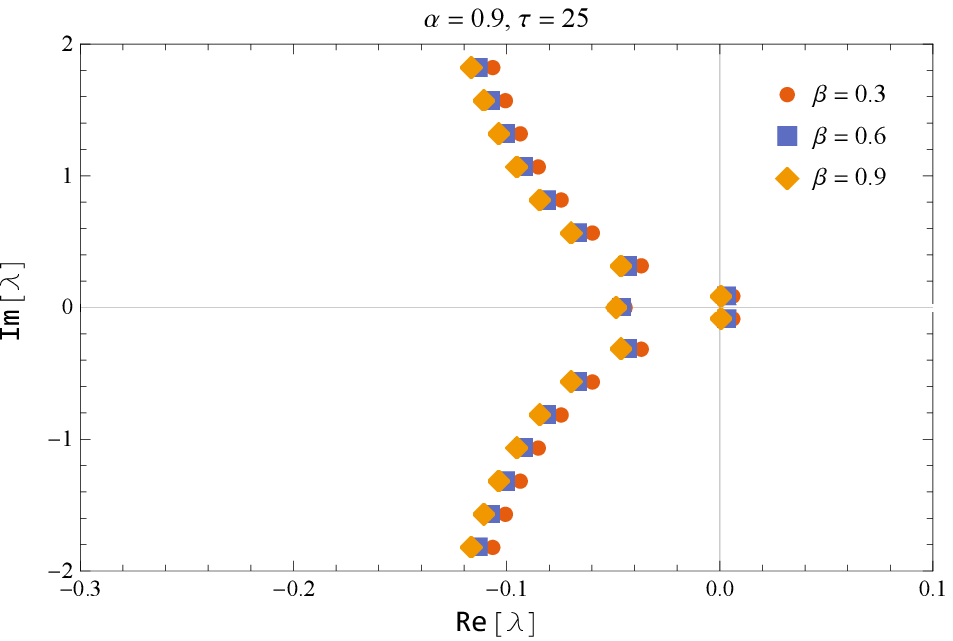}
			\label{fig:lambda_a0p9t25_betaVary}
		\end{subfigure}
		\begin{subfigure}[b]{0.3\textwidth}
			\centering
			\includegraphics[width=1\textwidth]{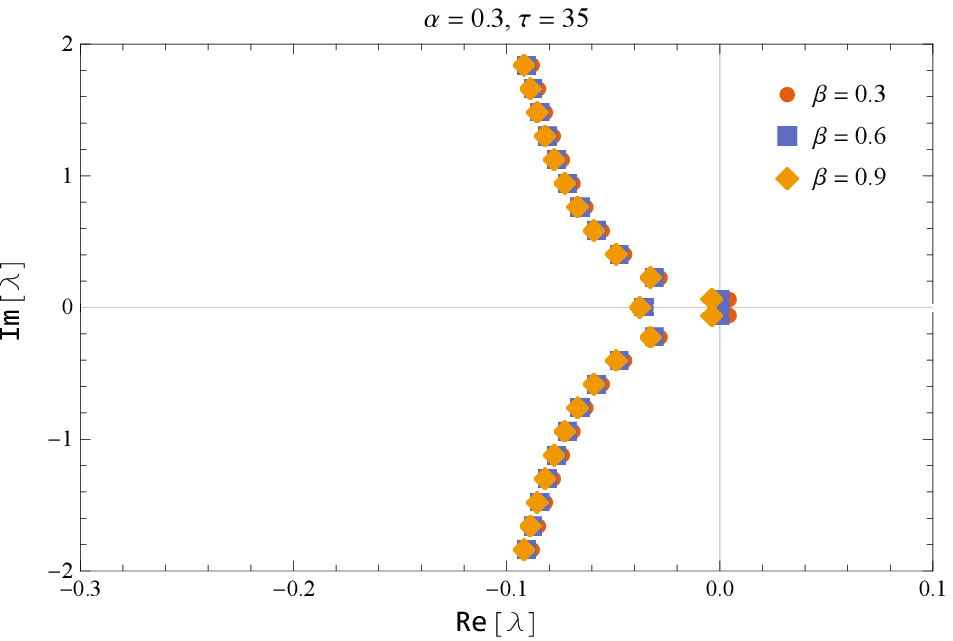}		
			\label{fig:lambda_a0p3t35_betaVary}
		\end{subfigure}
		\hfill
		\begin{subfigure}[b]{0.3\textwidth}
			\centering
			\includegraphics[width=1\textwidth]{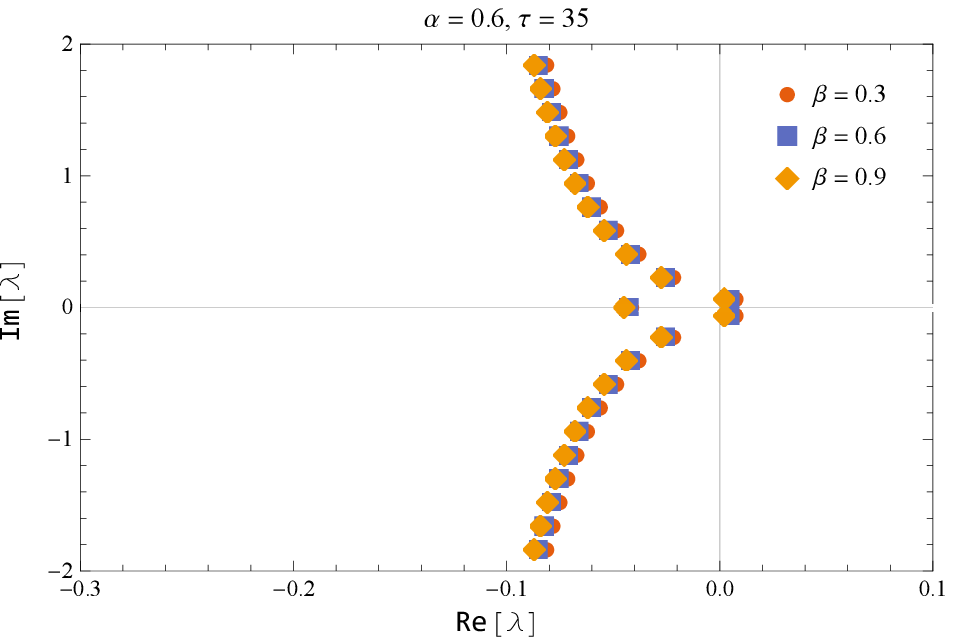}	
			\label{fig:lambda_a0p6t35_betaVary}
		\end{subfigure}
		\hfill
		\begin{subfigure}[b]{0.3\textwidth}
			\centering
			\includegraphics[width=1\textwidth]{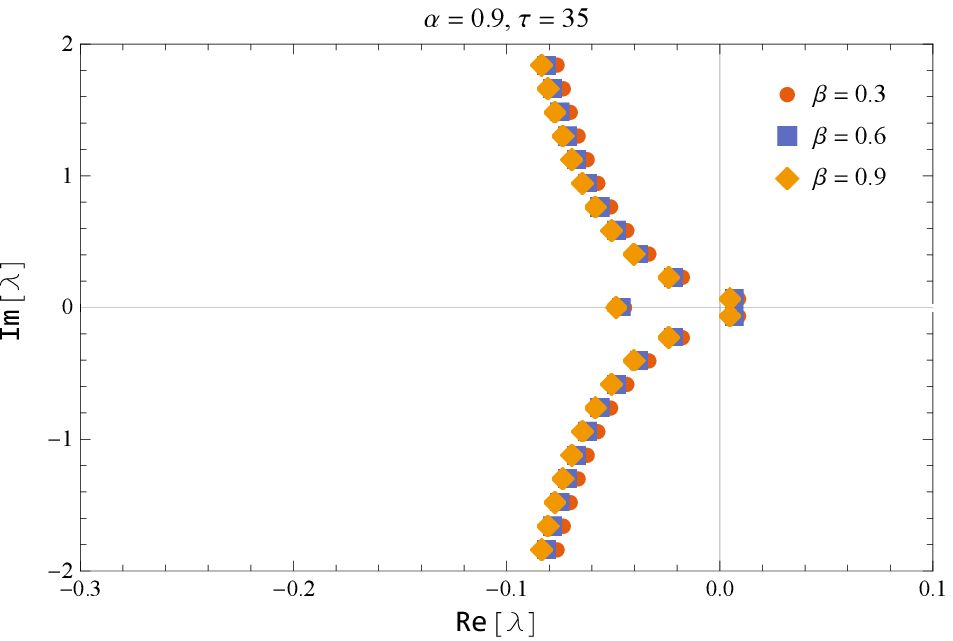}
			\label{fig:lambda_a0p9t35_betaVary}
		\end{subfigure}	
		\caption{The characteristic roots of the smallest modulus with various values of  $\alpha, $  $\beta, $ and $\tau.$}
		\label{fig:lambda_parameters_vary}
	\end{figure}

	Thus, we deduced that the (\ref{eq:sys1}) is asymptotically stable for   $0 \leq \tau < \tau_*$ and unstable for  $ \tau > \tau_*$, and undergoes a Hopf bifurcation at the   positive equilibrium  $E_{*}(x_{*}, y_{*})$ for $\tau = \tau_*(n)$ for $n = 0, 1, 2, \ldots$

	\subsection{Bifurcation Diagram}
	We vary the time delay $\tau$ with	 $\alpha = 0.5$ and $\beta = 0.8.$ When $\tau \geq 30.8017,$ the fixed point loses its stability through Hopf bifurcation as discussed in the previous section.  The system  (\ref{eq:sys1}) is run for a time span of [0 20,000] and we consider only the last one-fourth part to eliminate the possible transient responses. The bifurcation diagram is shown in Figure \ref{fig:bif_xyvstau}.
	
	\begin{figure}[H]
		\centering
		\begin{subfigure}[b]{0.45\textwidth}
			\centering
			\includegraphics[width=1\textwidth]{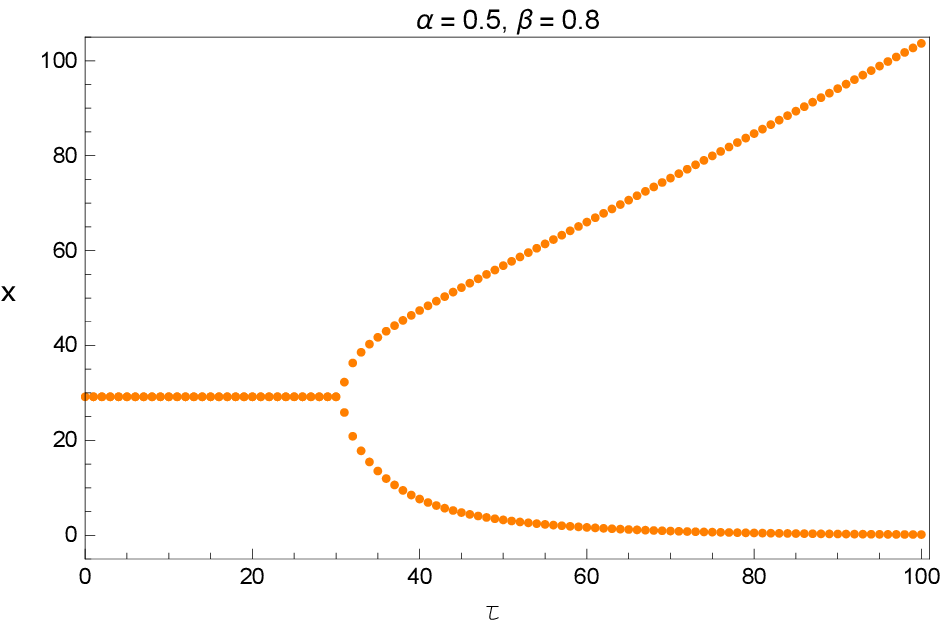}		
			\label{fig:xvstau_alpha0p5beta0p8}
		\end{subfigure}
		\hfill
		\begin{subfigure}[b]{0.45\textwidth}
			\centering
			\includegraphics[width=1\textwidth]{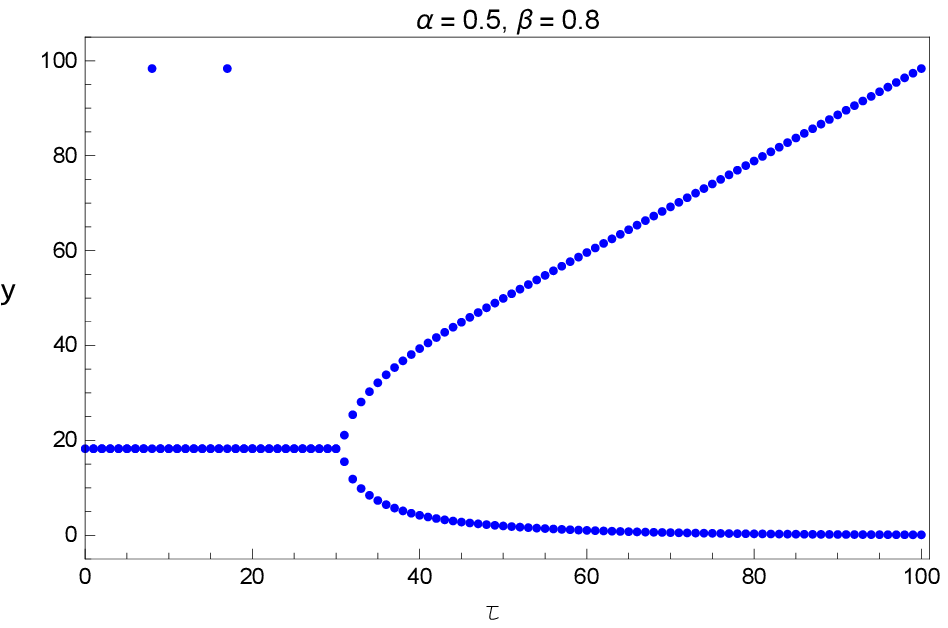}		
			\label{fig:yvstau_alpha0p5beta0p8}
		\end{subfigure}
		\caption{Bifurcation diagram with respect to time delay.}
		\label{fig:bif_xyvstau}
	\end{figure}

	We also plot the bifurcation diagram of the long term values of the system with the parameters $\alpha$ and $\beta$ listed in Table \ref{table:eq_delay}. Since these diagrams are plotted by a different method, this provides us another opportunity to verify the values we found in that table.  This is shown in Figures \ref{fig:bif_xyvstau_a0p3} - \ref{fig:bif_xyvstau_a0p9}.
	
	\begin{figure}[H]
		\centering
		\begin{subfigure}[b]{0.45\textwidth}
			\centering
			\includegraphics[width=1\textwidth]{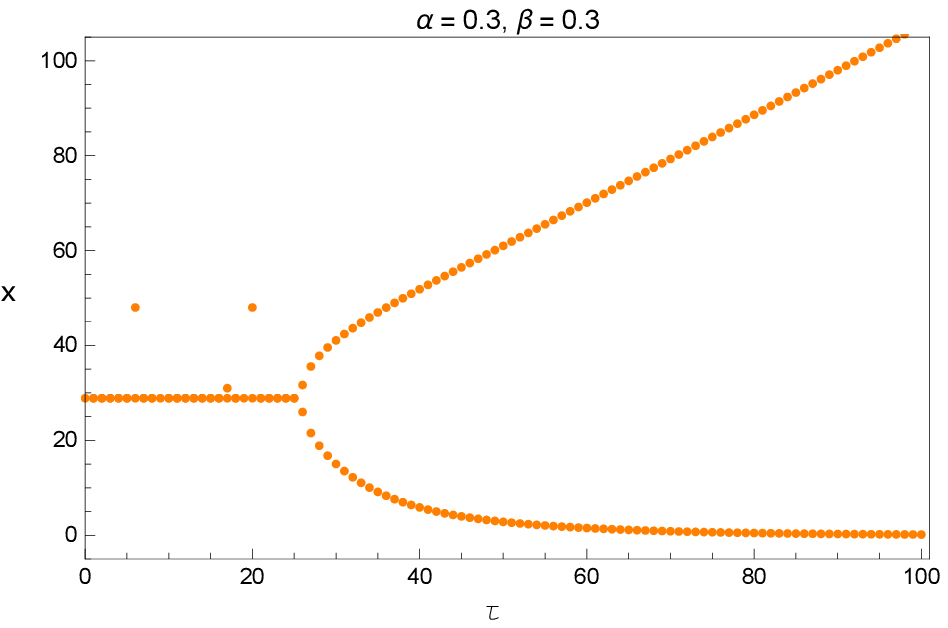}		
			\label{fig:xbif_a0p3b0p3}
		\end{subfigure}
		\hfill
		\begin{subfigure}[b]{0.45\textwidth}
			\centering
			\includegraphics[width=1\textwidth]{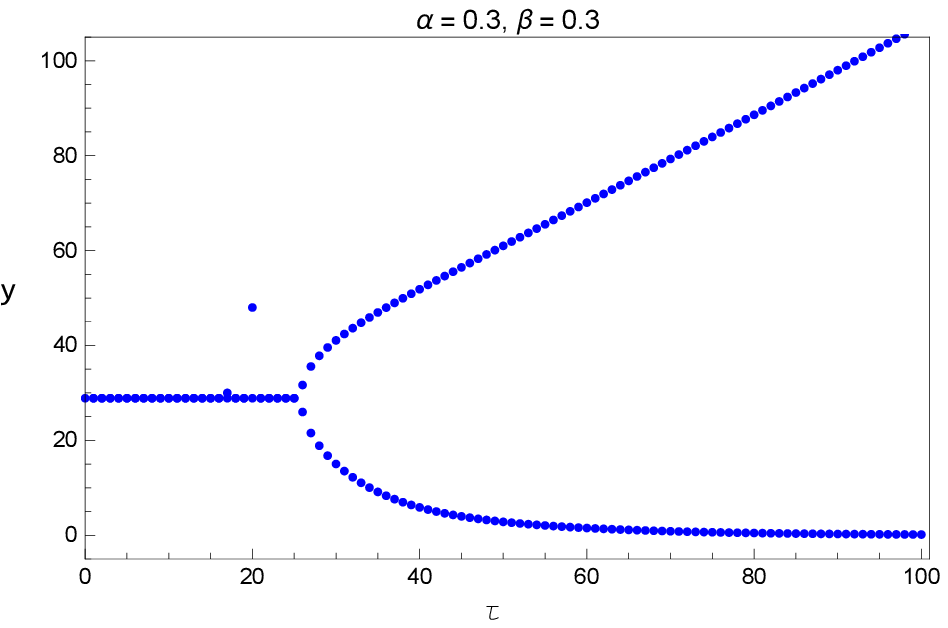}	
			\label{fig:ybif_a0p3b0p3}
		\end{subfigure}
		\hfill
		\begin{subfigure}[b]{0.45\textwidth}
			\centering
			\includegraphics[width=1\textwidth]{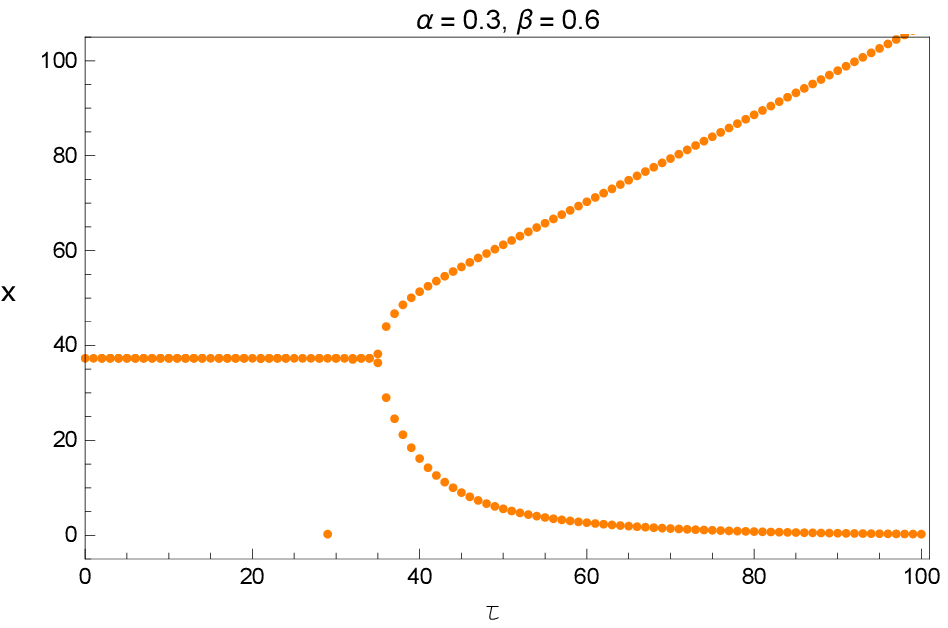}		
			\label{fig:xbif_a0p3b0p6}
		\end{subfigure}
		\hfill
		\begin{subfigure}[b]{0.45\textwidth}
			\centering
			\includegraphics[width=1\textwidth]{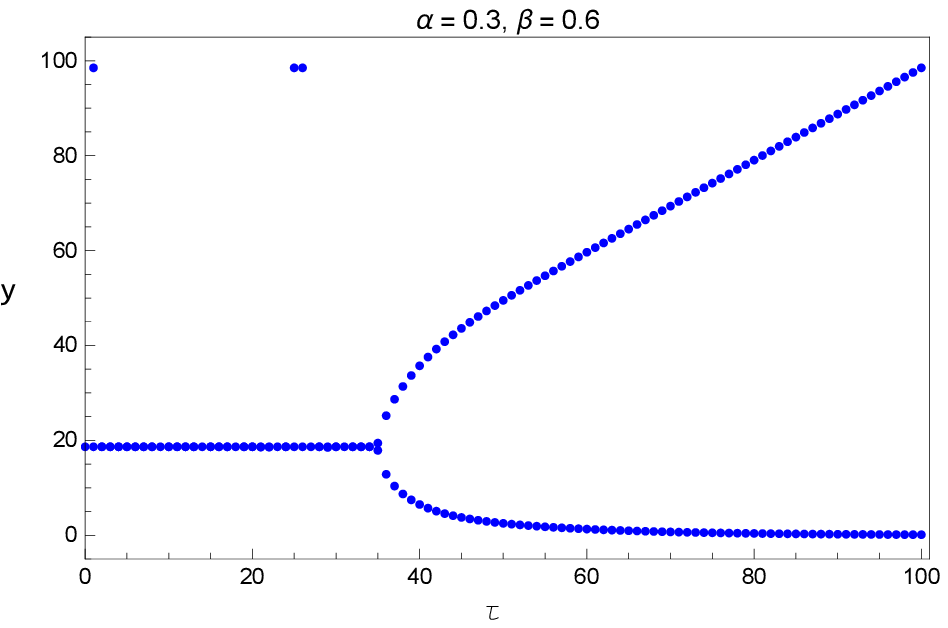}	
			\label{fig:ybif_a0p3b0p6}
		\end{subfigure}
		\hfill
		\begin{subfigure}[b]{0.45\textwidth}
			\centering
			\includegraphics[width=1\textwidth]{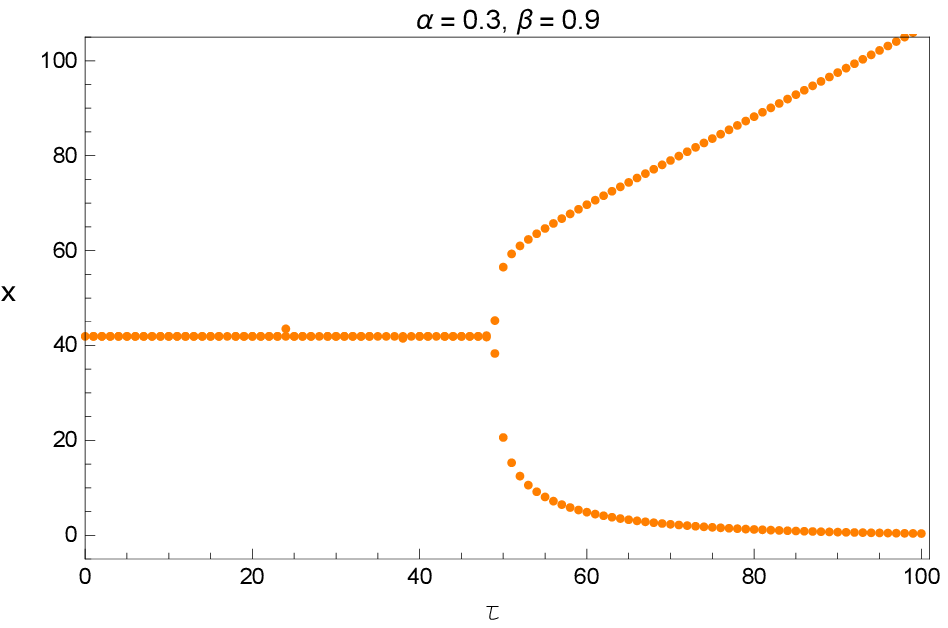}		
			\label{fig:xbif_a0p3b0p9}
		\end{subfigure}
		\hfill
		\begin{subfigure}[b]{0.45\textwidth}
			\centering
			\includegraphics[width=1\textwidth]{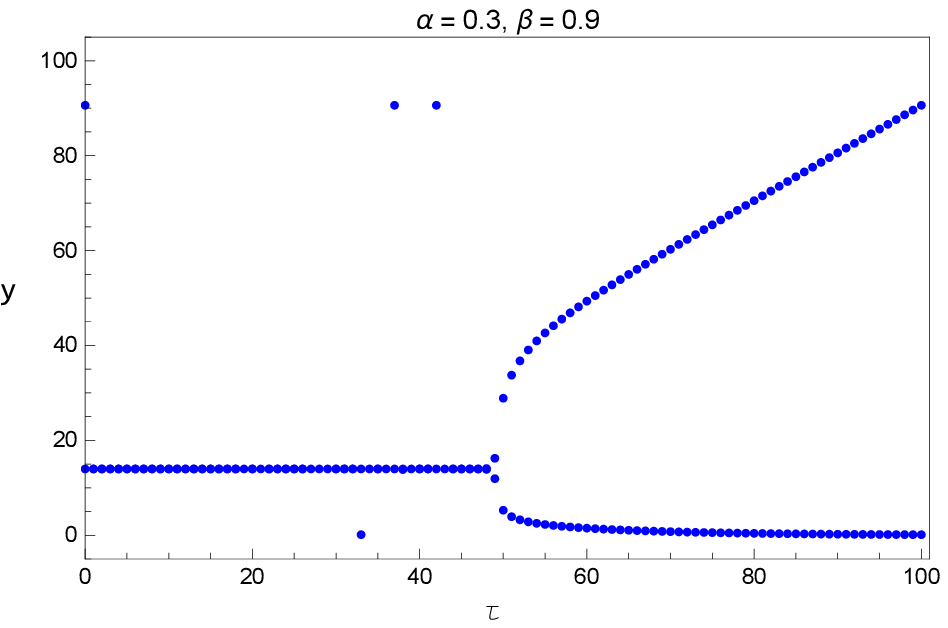}	
			\label{fig:ybif_a0p3b0p9}
		\end{subfigure}
		\caption{Bifurcation diagram with  $\alpha = 0.3 $ and various $\beta. $}
		\label{fig:bif_xyvstau_a0p3}
	\end{figure}

	\begin{figure}[H]
		\centering
		\begin{subfigure}[b]{0.45\textwidth}
			\centering
			\includegraphics[width=1\textwidth]{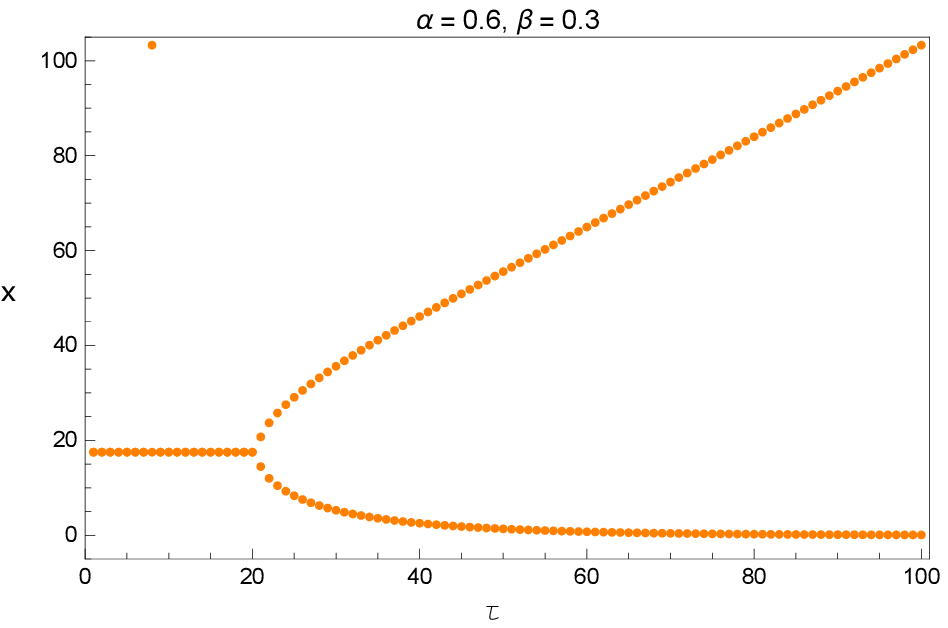}		
			\label{fig:xbif_a0p6b0p3}
		\end{subfigure}
		\hfill
		\begin{subfigure}[b]{0.45\textwidth}
			\centering
			\includegraphics[width=1\textwidth]{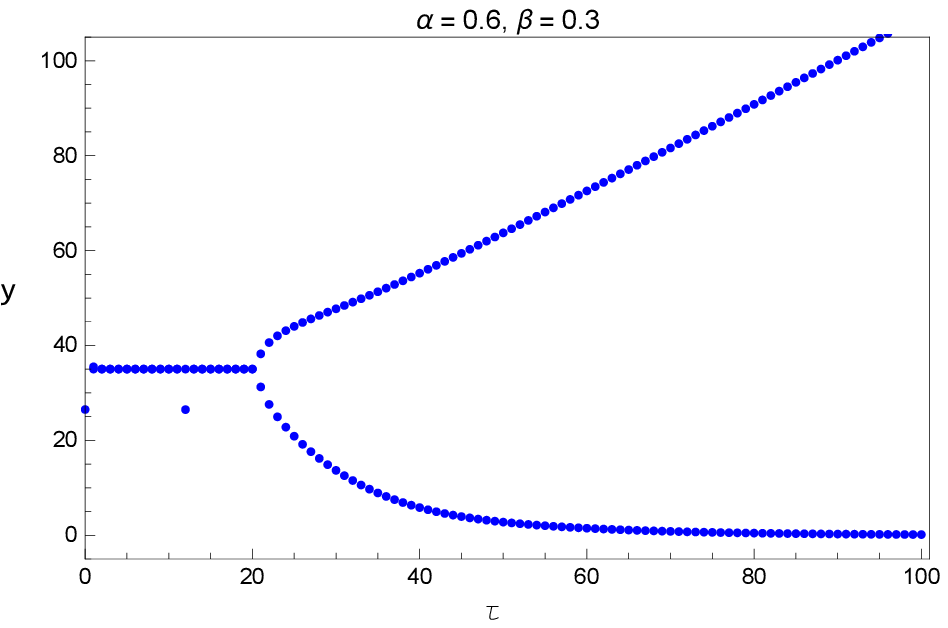}	
			\label{fig:ybif_a0p6b0p3}
		\end{subfigure}
		\hfill
		\begin{subfigure}[b]{0.45\textwidth}
			\centering
			\includegraphics[width=1\textwidth]{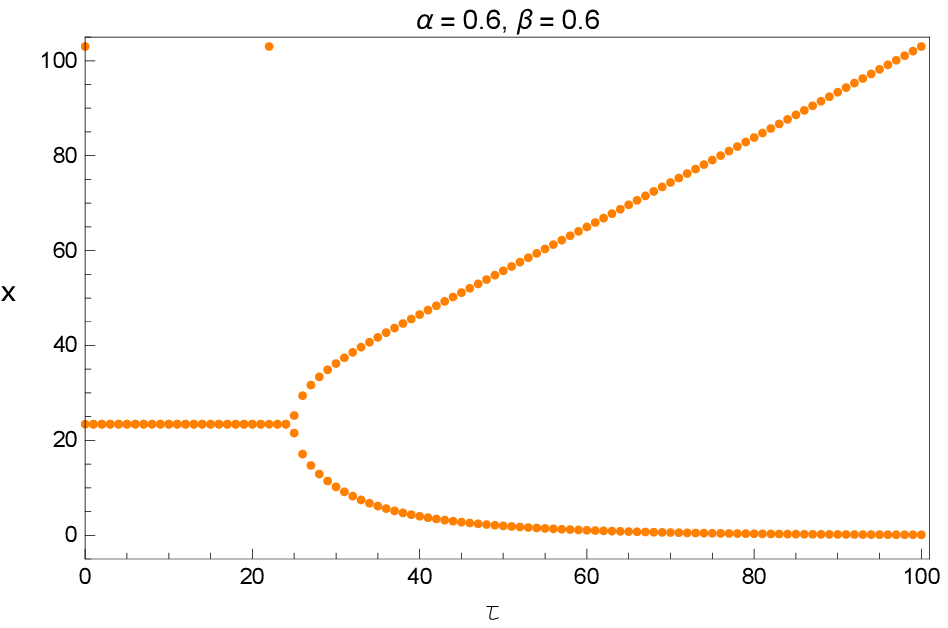}		
			\label{fig:xbif_a0p6b0p6}
		\end{subfigure}
		\hfill
		\begin{subfigure}[b]{0.45\textwidth}
			\centering
			\includegraphics[width=1\textwidth]{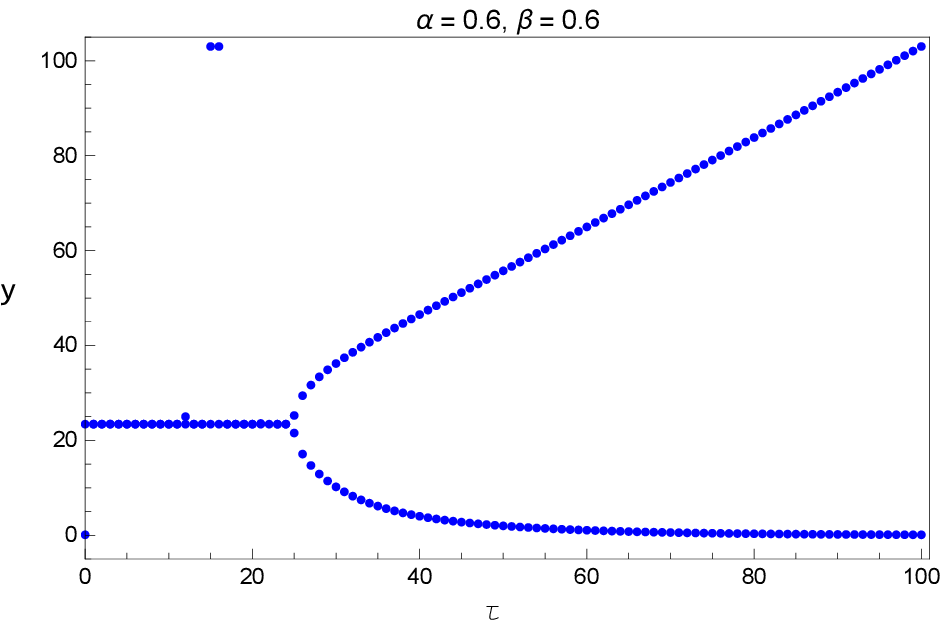}	
			\label{fig:ybif_a0p6b0p6}
		\end{subfigure}
		\hfill
		\begin{subfigure}[b]{0.45\textwidth}
			\centering
			\includegraphics[width=1\textwidth]{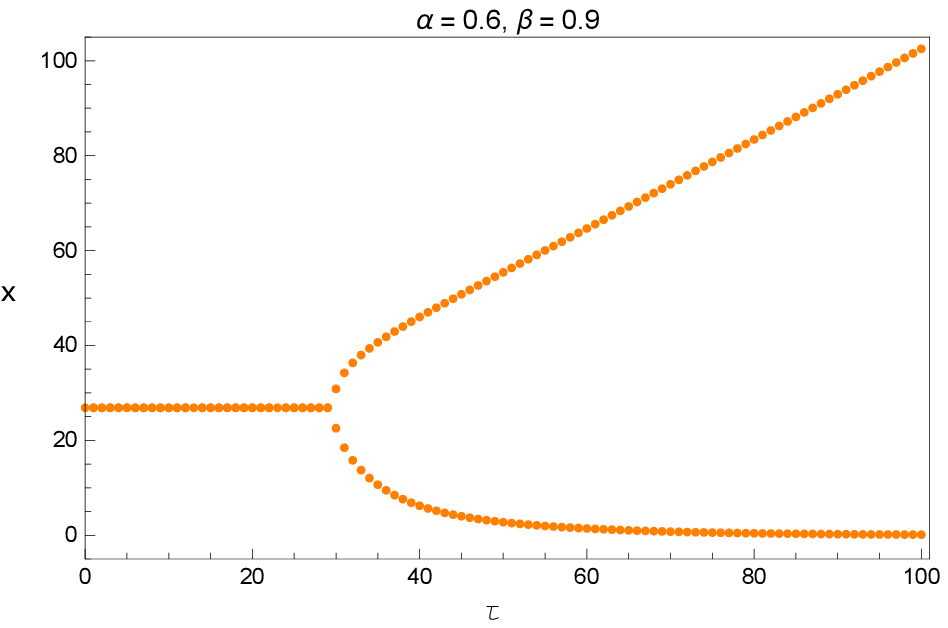}		
			\label{fig:xbif_a0p6b0p9}
		\end{subfigure}
		\hfill
		\begin{subfigure}[b]{0.45\textwidth}
			\centering
			\includegraphics[width=1\textwidth]{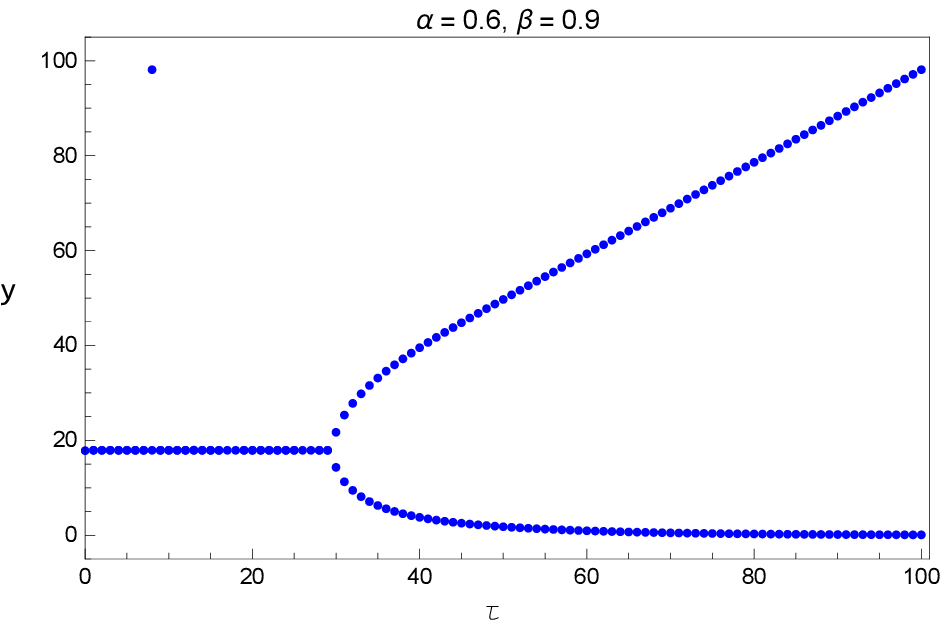}	
			\label{fig:ybif_a0p6b0p9}
		\end{subfigure}
		\caption{Bifurcation diagram with  $\alpha = 0.6 $ and various $\beta. $}
		\label{fig:bif_xyvstau_a0p6}
	\end{figure}
	
	\begin{figure}[H]
		\centering
		\begin{subfigure}[b]{0.45\textwidth}
			\centering
			\includegraphics[width=1\textwidth]{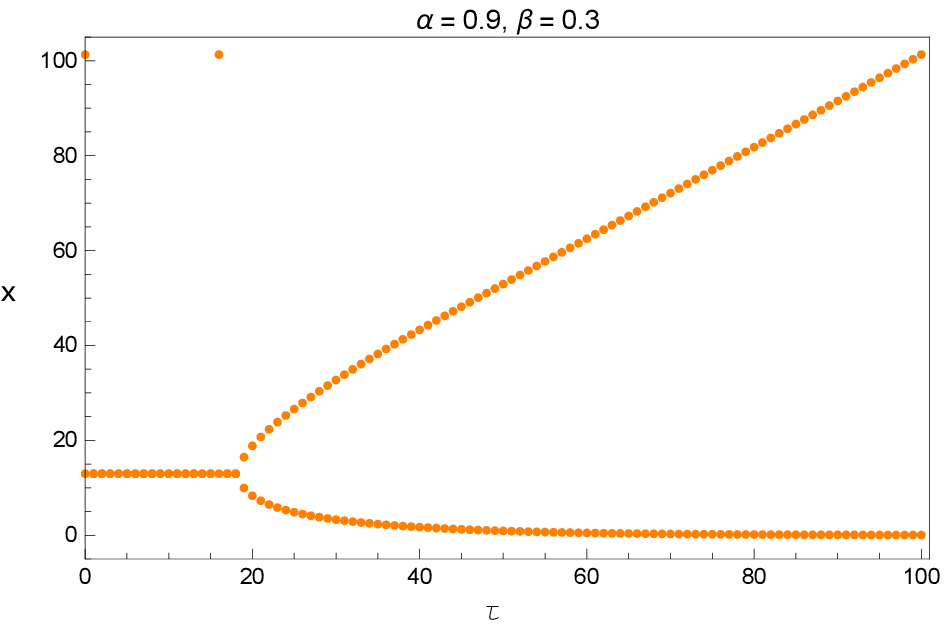}		
			\label{fig:xbif_a0p9b0p3}
		\end{subfigure}
		\hfill
		\begin{subfigure}[b]{0.45\textwidth}
			\centering
			\includegraphics[width=1\textwidth]{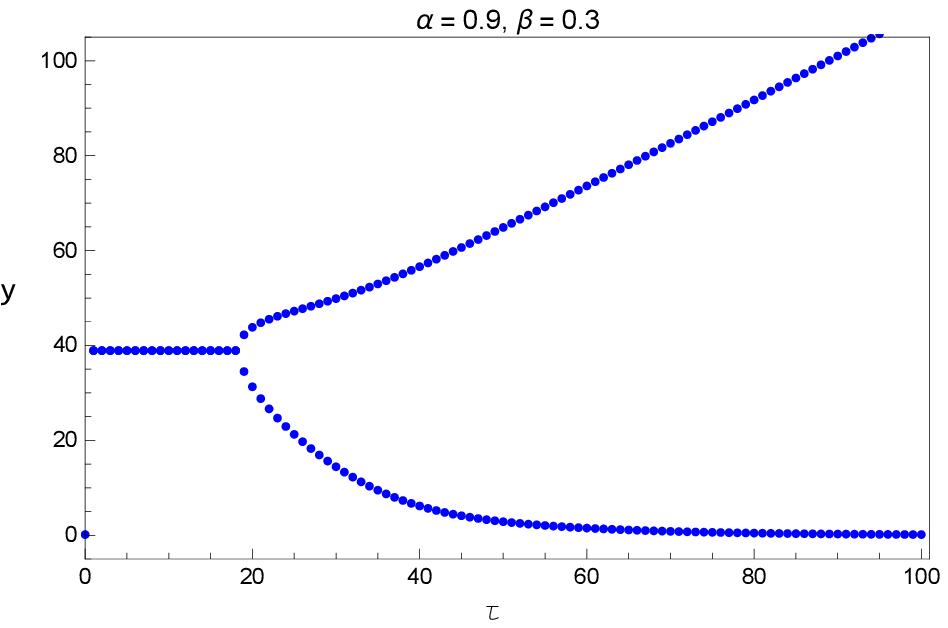}	
			\label{fig:ybif_a0p9b0p3}
		\end{subfigure}
		\hfill
		\begin{subfigure}[b]{0.45\textwidth}
			\centering
			\includegraphics[width=1\textwidth]{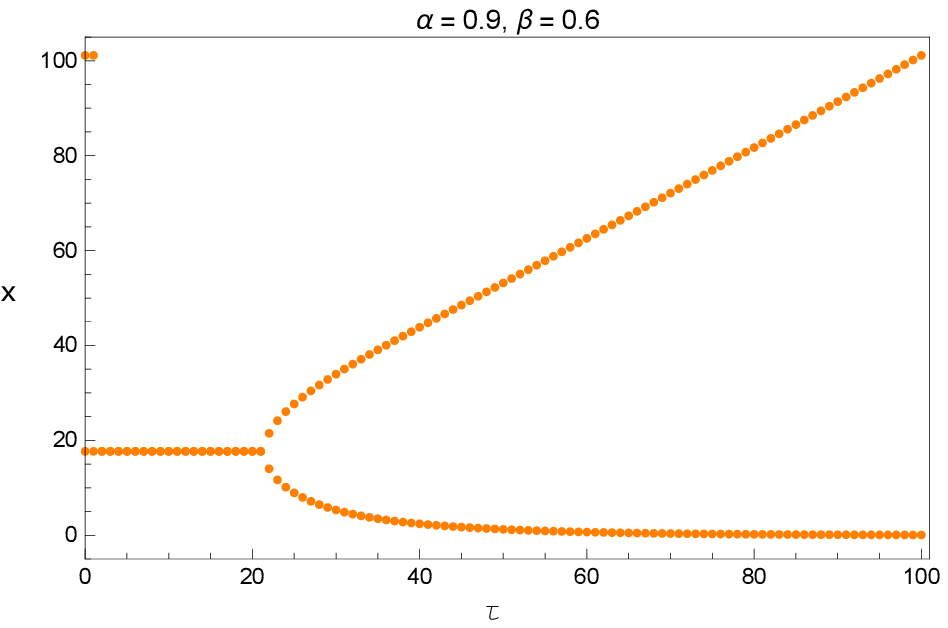}		
			\label{fig:xbif_a0p9b0p6}
		\end{subfigure}
		\hfill
		\begin{subfigure}[b]{0.45\textwidth}
			\centering
			\includegraphics[width=1\textwidth]{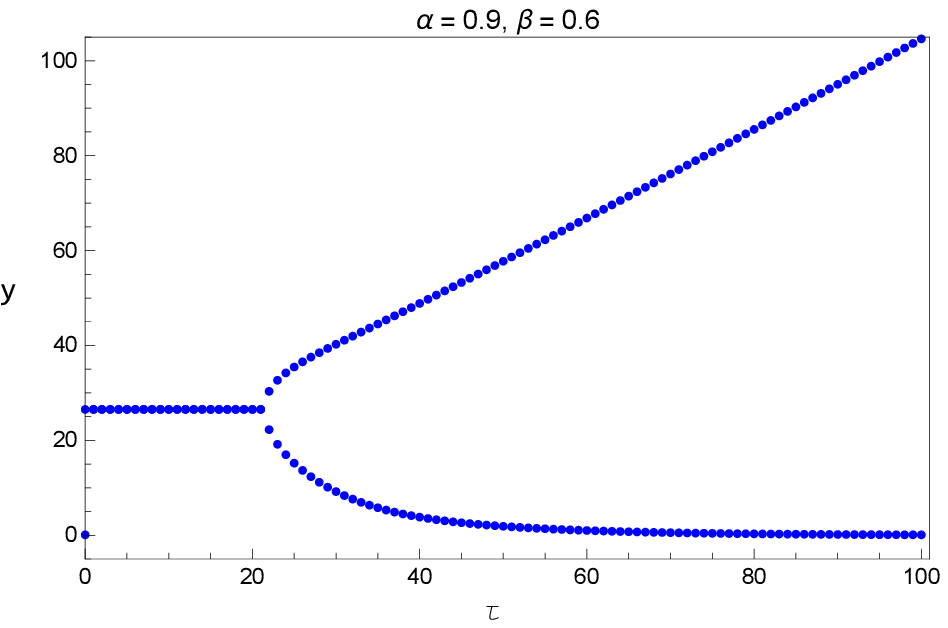}	
			\label{fig:ybif_a0p9b0p6}
		\end{subfigure}
		\hfill
		\begin{subfigure}[b]{0.45\textwidth}
			\centering
			\includegraphics[width=1\textwidth]{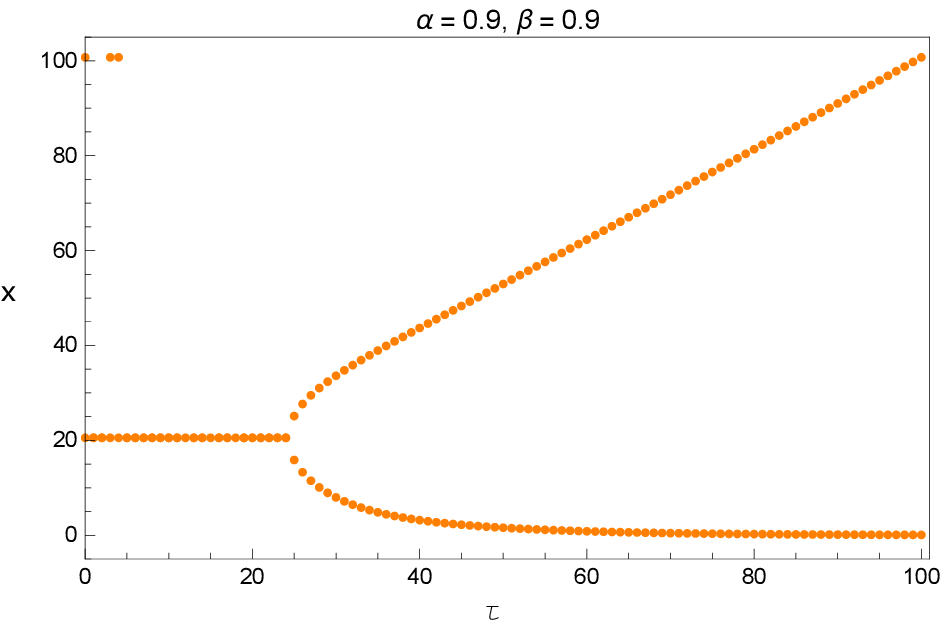}		
			\label{fig:xbif_a0p9b0p9}
		\end{subfigure}
		\hfill
		\begin{subfigure}[b]{0.45\textwidth}
			\centering
			\includegraphics[width=1\textwidth]{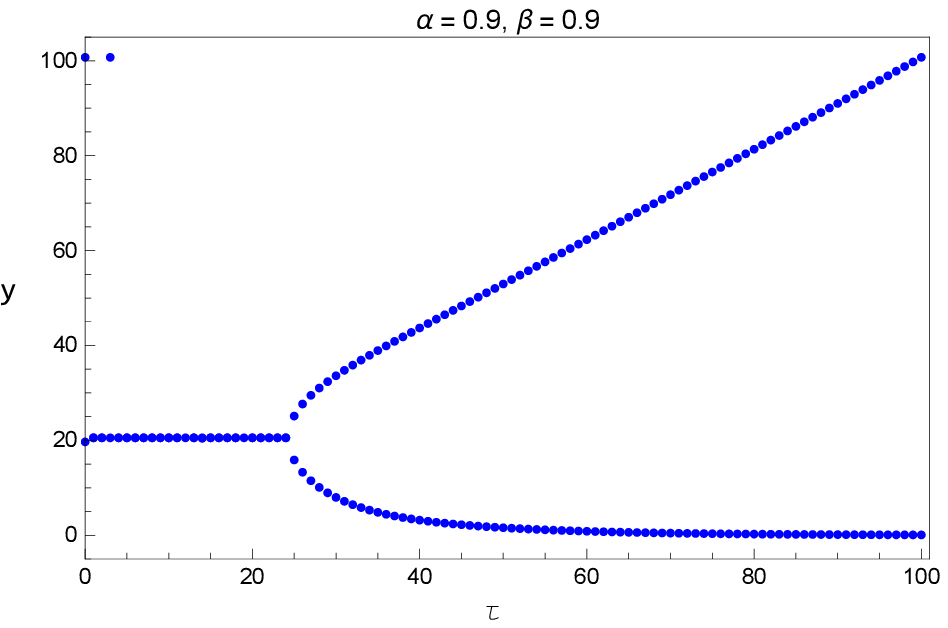}	
			\label{fig:ybif_a0p9b0p9}
		\end{subfigure}
		\caption{Bifurcation diagram with  $\alpha = 0.9 $ and various $\beta. $}
		\label{fig:bif_xyvstau_a0p9}
	\end{figure}
	
	\section{Direction and Stability of the Hopf Bifurcation}	
	In this section, we apply the normal form theory and the center manifold theorem of Hassard et al \cite{hassard1981theory} to get some properties of the Hopf bifurcation. In order to determine the direction and the stability of the Hopf bifurcation, we consider the following system whose equilibrium is shifted to the origin. Let $\tau = \tau_* + \mu$, then $\mu = 0 $ is the Hopf bifurcation value of system (\ref{eq:sys1}) at the positive equilibrium $E(x_*, y_*)$ and  $\pm i \omega_*$ is the corresponding purely imaginary roots of the characteristic equation. 
	
	For the convenience of discussion, let $x_1 = x - x_*$, $x_2 = y - y_*$. The system (\ref{eq:sys1}) can be regarded as FDE in  $C = C([-1,0], \mathbb{R}^2)$ as
	
	\begin{equation} \label{eq:FDE1}
		\dot{x} = L_{\mu}(x_t) + f(\mu, x_t)	
	\end{equation}
	where $x(t) = (x_1, x_2)^T \in \mathbb{R}^2 $, and $L_{\mu} : C \to \mathbb{R}$, $f: \mathbb{R} \times C \to \mathbb{R}$ are given respectively by
	
	\begin{equation}
		\begin{split}
			L_{\mu}(\phi) &= (\tau + \mu) 	
			\left(
			\begin{array}{cc}
				\frac{1}{25} (-7) \beta  e^{\frac{\gamma }{20}-5} \gamma  & -\frac{7 \beta ^2 e^{\frac{\gamma }{20}-5} \gamma ^2}{1000 \alpha } \\
				\frac{1}{50} (-7) \beta  e^{\frac{\gamma }{20}-5} \gamma  & -\frac{7 \beta ^2 e^{\frac{\gamma }{20}-5} \gamma  (\gamma +20)}{1000 \alpha } \\
			\end{array}
			\right)
			\begin{pmatrix}
				\phi_1(0) \\
				\phi_2(0) \\
			\end{pmatrix}\\	&+(\tau + \mu) 	
			\left(
			\begin{array}{cc}
				\frac{1}{50} (-7) \alpha  e^{\frac{\gamma }{20}-5} x_* & -\frac{7 \beta  e^{\frac{\gamma }{20}-5} \gamma  x_*}{1000} \\
				\frac{1}{50} (-7) \beta  e^{\frac{\gamma }{20}-5} y_* & -\frac{7 \beta ^2 e^{\frac{\gamma }{20}-5} \gamma  y_*}{1000 \alpha } \\
			\end{array}
			\right)
			\begin{pmatrix}
				\phi_1(-1) \\
				\phi_2(-1) \\
			\end{pmatrix}
		\end{split}
	\end{equation}
	and 
	\begin{equation}  \label{eq:f_mu}
		f(\mu, \phi) = (\tau + \mu) 
		\begin{pmatrix}
			-\frac{7 \alpha }{50 e^5} \phi_1^2(0) \\
			-\frac{7 \beta  }{50 e^5} \phi_1(-1) \phi_2(-1)   \\
		\end{pmatrix}		
	\end{equation}
	
	According to the Riesz representation theorem, there exists a matrix function $\eta(\vartheta, \mu),$ $\vartheta \in [-1,0]$ with bounded variation components such that
	\begin{equation} \label{eq:L_mu}
		L_{\mu}(\phi)	= 	\int _{-1}^0d\eta (\vartheta ,0)\phi(\vartheta) \quad \text{for} \quad \phi \in C
	\end{equation}
	Actually we can take
	\begin{equation} \label{eq:eta1}
		\begin{split}
			\eta(\vartheta, \mu) &= (\tau + \mu) 	
			\left(
			\begin{array}{cc}
				\frac{1}{25} (-7) \beta  e^{\frac{\gamma }{20}-5} \gamma  & -\frac{7 \beta ^2 e^{\frac{\gamma }{20}-5} \gamma ^2}{1000 \alpha } \\
				\frac{1}{50} (-7) \beta  e^{\frac{\gamma }{20}-5} \gamma  & -\frac{7 \beta ^2 e^{\frac{\gamma }{20}-5} \gamma  (\gamma +20)}{1000 \alpha } \\
			\end{array}
			\right)
			\delta(\vartheta)\\	&-(\tau + \mu) 	
			\left(
			\begin{array}{cc}
				\frac{1}{50} (-7) \alpha  e^{\frac{\gamma }{20}-5} x_* & -\frac{7 \beta  e^{\frac{\gamma }{20}-5} \gamma  x_*}{1000} \\
				\frac{1}{50} (-7) \beta  e^{\frac{\gamma }{20}-5} y_* & -\frac{7 \beta ^2 e^{\frac{\gamma }{20}-5} \gamma  y_*}{1000 \alpha } \\
			\end{array}
			\right)
			\delta(\vartheta + 1)
		\end{split}
	\end{equation}	
	where $\delta$ is the Dirac Delta function. For  $\phi \in C^1([-1,0], \mathbb{R}^2)$, define
	\begin{equation}
		A(\mu)\phi = \left\{\begin{array}{ll}\frac{d\phi(\vartheta)}{d\vartheta}, & \vartheta \in \lbrack-1,0), \\ 
			
			\int_1^0 d\eta(\mu,s)\phi(s) & \vartheta = 0,\end{array}\right.
	\end{equation}
	and
	\begin{equation}
		R(\mu)\phi = \left\{\begin{array}{ll} 0, & \vartheta \in \lbrack-1,0), \\
			f(\mu,\phi) & \vartheta = 0,\end{array}\right.
	\end{equation}
	The system 	(\ref{eq:FDE1}) can be represented as
	\begin{equation} \label{eq:FDE2}
		\dot{x} = A_{\mu}(x_t) + R(\mu) x_t	
	\end{equation}
	where $x_t(\vartheta) = x(t + \vartheta)$ for $\vartheta \in [-1,0].$
	
	For $\psi \in C^1([0,1], (\mathbb{R}^2)^*),$ define 	
	\begin{equation}
		A^*\psi(s) = \left\{\begin{array}{ll}-\frac{d \psi(s)}{d s}, & s \in (0,1\rbrack, \\ 
			
			\int_1^0 d\eta^T(t,0)\psi(-t) & s = 0,\end{array}\right.
	\end{equation}	
	Furthermore, for  $\phi \in C^1([-1,0], (\mathbb{R}^2)^*),$ and  $\psi \in C^1([0,1], (\mathbb{R}^2)^*),$ we give the bilinear inner product as
	\begin{equation} \label{eq:bip1}
		\left< \psi(s), \phi(\vartheta) \right> = \bar{\psi}(0)\phi(0) - \int_{-1}^{0}\int_{\xi =0}^{\vartheta}\bar{\psi}(\xi - \vartheta)d\eta(\vartheta)\phi(\xi)d\xi	
	\end{equation}
	where $\eta(\vartheta) = \eta(\vartheta,0).$ Then $A(0)$ and $A^*$ are adjoint operators.  From previous section, we have that $\pm i \omega_* \tau_* $ are eigenvalues of $A(0).$ It is evident that they are also the eigenvalues of the linear operator $A^*.$ We need to compute the eigenvectors of $A(0)$ and $A^*$ corresponding to $i \omega_* \tau_*$ and $- i \omega_* \tau_*.$

	Assume that
	\begin{equation}
		q(\vartheta) = 
		\begin{pmatrix}
			1   \\
			c   \\
		\end{pmatrix} e^{i \omega_* \tau_* \vartheta}
	\end{equation}
	is the eigenvector of $A(0)$ corresponding to $i\omega_* \tau_*$
	and when $\vartheta = 0,$ we have
	\begin{equation}
		q(0) = 
		\begin{pmatrix}
			1   \\
			c   \\
		\end{pmatrix}
	\end{equation}	
	Then
	\begin{equation}
		A(0) q(\vartheta)  = i \omega_* \tau_* q(\vartheta)			
	\end{equation}
	From the definition of $A(0)$ and (\ref{eq:FDE1}), (\ref{eq:L_mu}) and (\ref{eq:eta1}), we get
	\begin{equation}
		A(0) q(0) = 	\begin{pmatrix}
			i \omega_* \tau_*   \\
			i \omega_* \tau_*	c   \\
		\end{pmatrix}	
	\end{equation}
	or,
	\begin{equation}
		\tau_* \left(
		\begin{array}{cc}
			\frac{7}{25} \beta  e^{\frac{\gamma }{20}-5} \gamma +\frac{7}{50} \alpha  x e^{\frac{\gamma }{20}-i \tau  \omega -5}+i \omega  & \frac{7 \beta ^2 e^{\frac{\gamma }{20}-5} \gamma ^2}{1000 \alpha }+\frac{7 \beta  \gamma  x e^{\frac{\gamma }{20}-i \tau  \omega -5}}{1000} \\
			\frac{7}{50} \beta  e^{\frac{\gamma }{20}-5} \gamma +\frac{7}{50} \beta  y e^{\frac{\gamma }{20}-i \tau  \omega -5} & \frac{7 \beta ^2 e^{\frac{\gamma }{20}-5} \gamma  (\gamma +20)}{1000 \alpha }+\frac{7 \beta ^2 \gamma  y e^{\frac{\gamma }{20}-i \tau  \omega -5}}{1000 \alpha }+i \omega  \\
		\end{array}
		\right).
	\end{equation}
	\begin{equation*}
		q(0) = \left(
		\begin{array}{c}
			0 \\
			0 \\
		\end{array}
		\right)
	\end{equation*}
	Thus we obtain,
	\begin{equation}
		c = -\frac{140 \alpha  \beta  e^{\gamma /20} \left(y+\gamma  e^{i \tau  \omega }\right)}{1000 i \alpha  \omega  e^{5+i \tau  \omega }+7 \beta ^2 \gamma ^2 e^{\frac{\gamma }{20}+i \tau  \omega }+140 \beta ^2 \gamma  e^{\frac{\gamma }{20}+i \tau  \omega }+7 \beta ^2 e^{\gamma /20} \gamma  y}	
	\end{equation}
	
	Similarly, we can get the eigenvector 
	\begin{equation}
		q^*(s) = D 
		\begin{pmatrix}
			1   \\
			c^*   \\
		\end{pmatrix} e^{i \omega_* \tau_* s} 
	\end{equation}
	of $A^*$ corresponding to $-i \omega_* \tau_*,$ 
	where 
	\begin{equation}
		c^* = \frac{50 i \omega  e^{-\frac{\gamma }{20}+i \tau  \omega +5}-7 \left(\alpha  x+2 \beta  \gamma  e^{i \tau  \omega }\right)}{7 \beta  \left(y+\gamma  e^{i \tau  \omega }\right)}	
	\end{equation}
	
	Now we evaluate the value of $D$ such that $\left<q^*(s), q(\vartheta) \right> = 1.$ From the bilinear inner product of (\ref{eq:bip1}), it follows that 
	
	\begin{equation}
		\begin{aligned}
			1 &=	\left<q^*(s), q(\vartheta) \right>\\
			&= \bar{q}^*(0)q(0) - \int_{-1}^{0}\int_{\xi =0}^{\vartheta}\bar{q}^*(\xi - \vartheta)d\eta(\vartheta)q(\xi)d\xi \\
			&= \bar{D}(1, \bar{c}^*)(1, c)^T - \int_{-1}^{0}\int_{\xi =0}^{\vartheta}\bar{D}(1, \bar{c}^*) e^{-i \omega_* \tau_* (\xi - \vartheta)} d\eta(\vartheta) (1, c)^T  e^{i \omega_* \tau_* \xi}   d\xi\\
			&= \bar{D}(1, \bar{c}^*)(1, c)^T - \bar{D}(1, \bar{c}^*) \int_{-1}^{0}\int_{\xi =0}^{\vartheta}  d\eta(\vartheta) (1, c)^T  e^{i \omega_* \tau_* \vartheta}   d\xi\\
			&= \bar{D}(1, \bar{c}^*)(1, c)^T - \bar{D}(1, \bar{c}^*)  \int_{-1}^{0} \vartheta e^{i \omega_* \tau_* \vartheta}  d\eta(\vartheta) (1, c)^T     \\
			&= \bar{D}\left\{ 1 + c \bar{c}^* +\bar{c}^* \tau_* \left(  -\frac{7 e^{\frac{\gamma }{20}-5} (20 \alpha +\beta  c \gamma ) \left(\beta  c^* y_*+\alpha  x_*\right)}{1000 \alpha } \right)   e^{-i \omega_* \tau_*} \right\}
		\end{aligned}
	\end{equation}
	
	Thus, we have
	\begin{equation}
		\bar{D} = \frac{1}{ 1 + c \bar{c}^* +\bar{c}^* \tau_* \left(  -\frac{7 e^{\frac{\gamma }{20}-5} (20 \alpha +\beta  c \gamma ) \left(\beta  c^* y_*+\alpha  x_*\right)}{1000 \alpha } \right)   e^{-i \omega_* \tau_*} }	
	\end{equation}
	
	
	In addition, from 	$ \left< \psi, A \phi \right> =  \left< A^*\psi,  \phi \right> $ and $ A \bar{q}(\vartheta) = -i \omega_* \tau_* \bar{q}(\vartheta),	$ we can obtain
	\begin{equation}
		\begin{aligned}
			-i \omega_* \tau_* \left<q^*, \bar{q}\right> & = \left<q^*, A\bar{q}\right> \\
			&= 		\left<A^*q^*, \bar{q}\right> \\
			&= 		\left<A^*q^*, \bar{q}\right> \\
			&= 		i \omega_* \tau_* \left<q^*, \bar{q}\right>\\
		\end{aligned}
	\end{equation}
	
	Hence  $\left<q^*(s), \bar{q}(\vartheta)\right> = 0.$
	
	In rest of the section, we calculate the coordinates to describe the center manifold $C_0$ at $\mu = 0$ by the method used in Hassard paper. Let $x_t$ be the solution of equation (\ref{eq:FDE2}) and define $z(t) = \left<q^*,x_t\right>;$ then
	\begin{equation}
		\begin{aligned}
			\dot{z}(t) &= \left<q^*, \dot{x}_t \right>	= 	\left<q^*, A(0)\dot{x}_t + R(0)x_t\right>\\
			&= 	\left<q^*, A(0)\dot{x}_t \right> + 	\left<q^*, R(0)x_t\right> \\
			&= 	\left<A^*(0)q^*, \dot{x}_t \right> + 	\bar{q}^*(0)f_0(z,\bar{z}) \\
			&= 	i \omega_* \tau_* z + 	g(z,\bar{z}) \\
		\end{aligned}
	\end{equation}
	where
	\begin{equation}  \label{eq:qz}
		g(z, \bar{z}) = \bar{q}^*(0)f_0(z,\bar{z})	= g_{20}\frac{z^2}{2} + g_{11}z \bar{z} + g_{02} \frac{\bar{z}^2}{2} 
		+  g_{21} \frac{z^2\bar{z}}{2} + \ldots
	\end{equation}
	Let
	\begin{equation}
		\begin{aligned} \label{eq:W1}
			W(t,\vartheta) &= x_t(\vartheta) -z(t)q(\vartheta) - \bar{z}(t) \bar{g}(\vartheta)	\\
			&= 	x_t(\vartheta) - 2 \text{Re} \left\{z(t)q(\vartheta)\right\} \\
		\end{aligned}
	\end{equation}
	On the center manifold $C_0,$ we have
	\begin{equation}
		W(t,\vartheta) = W(z(t),\bar{z}(t), \vartheta),	
	\end{equation}
	where
	\begin{equation} \label{eq:W2}
		W(z, \bar{z}, \vartheta)  =  W_{20}(\vartheta)\frac{z^2}{2} + W_{11} (\vartheta)z \bar{z} + W_{02} (\vartheta) \frac{\bar{z}^2}{2} +  W_{30} (\vartheta) \frac{z^3 }{6} + \ldots	
	\end{equation}
	$z$ and $\bar{z}$ are local coordinates for center manifold $C_0$ in the direction of $q^*$ and $\bar{q}^*.$ Note that $W$ is real if $x_t$ is real. We only consider real solutions. 
	
	It follows from (\ref{eq:W1}) and (\ref{eq:W2}) that
	\begin{equation}
		\begin{aligned}
			x_t(\vartheta) &= (	x_{1t}(\vartheta), 	x_{2t}(\vartheta))^T\\
			&=  W(t, \vartheta) + 2 \text{Re}  \left\{z(t)q(\vartheta)\right\}\\
			&= W(t, \vartheta) +   z(t)q(\vartheta) +  \bar{z}(t)\bar{q}(\vartheta)\\
			&= W_{20}(\vartheta)\frac{z^2}{2} + W_{11} (\vartheta)z \bar{z} + W_{02} (\vartheta) \frac{\bar{z}^2}{2}\\
			& \quad + (1, c)^T e^{i \omega_* \tau_* \vartheta}z + (1, \bar{c})^T e^{-i \omega_* \tau_* \vartheta}\bar{z} + \ldots\\ 
		\end{aligned} 	
	\end{equation}

	From (\ref{eq:f_mu}), it follows that
	\begin{equation}
		\begin{aligned}
			q(z,\bar{z}) &= \bar{q}^*(0)f(0,x_t) \\
			&= \tau_* \bar{D}(1,\bar{c}^*) 		\begin{pmatrix}
				-\frac{7 \alpha }{50 e^5} x_{1t}^2(0) \\
				-\frac{7 \beta  }{50 e^5} x_{1t}(-1) x_{2t}(-1)   \\	
			\end{pmatrix}\\
		\end{aligned}
	\end{equation}
	
	and with
	\begin{equation}
		\begin{aligned}
			x_{1t}(0) &= z + \bar{z} + W_{20}^{1}(0)\frac{z^2}{2}	 + W_{11}^{1}(0)z \bar{z} + W_{02}^{1} (0) \frac{\bar{z}^2}{2} + \dots\\
			x_{1t}(-1) &= z e^{-i \tau _* \omega _*} + \bar{z} e^{i \tau _* \omega _*} + W_{20}^{1}(0)\frac{z^2}{2}	 + W_{11}^{1}(0)z \bar{z} + W_{02}^{1} (0) \frac{\bar{z}^2}{2} + \dots\\
			x_{2t}(-1) &= z c e^{-i \tau _* \omega _*} + \bar{z} \bar{c} e^{i \tau _* \omega _*} + W_{20}^{2}(0)\frac{z^2}{2}	 + W_{11}^{2}(0)z \bar{z} + W_{02}^{2} (0) \frac{\bar{z}^2}{2} + \dots\\
		\end{aligned}
	\end{equation}
	we get,
	\begin{equation}
		\begin{aligned}
			q(z,\bar{z}) &= 		 \tau_* \bar{D}    \left\lbrace 		-\frac{7 \alpha }{50 e^5} \left(z + \bar{z} + W_{20}^{1}(0)\frac{z^2}{2}	 +  W_{11}^{1}(0)z \bar{z} + W_{02}^{1} (0) \frac{\bar{z}^2}{2} + \dots \right)^2 \right.\\ 
			& \quad  -\frac{7 \beta \bar{c}^* }{50 e^5} \left( z e^{-i \tau _* \omega _*} + \bar{z} e^{i \tau _* \omega _*} + W_{20}^{1}(0)\frac{z^2}{2}	 + W_{11}^{1}(0)z \bar{z} + W_{02}^{1} (0) \frac{\bar{z}^2}{2} + \dots \right)\\
			& \quad \times \left. \left( z c e^{-i \tau _* \omega _*} + \bar{z} \bar{c} e^{i \tau _* \omega _*} + W_{20}^{2}(0)\frac{z^2}{2}	 + W_{11}^{2}(0)z \bar{z} + W_{02}^{2} (0) \frac{\bar{z}^2}{2} + \dots \right)   \right\rbrace \\
		\end{aligned}
	\end{equation}

	Comparing the coefficients with (\ref{eq:qz}), we have
	\begin{equation} \label{eq:gcoeff}
		\begin{aligned}
			g_{20} &= 2 \tau_* \bar{D} \left\{ -\frac{7 \alpha }{50 e^5}-\frac{7}{50} \beta  c \bar{c}^* e^{-5-2 i \tau _* \omega _*} \right\} \\
			g_{11} &= 2 \tau_* \bar{D} \left\{  -\frac{7 \alpha }{25 e^5} -\frac{7 \beta  \bar{c}^*  \text{Re}\{c\}}{50 e^5} \right\} \\
			g_{02} &= 2 \tau_* \bar{D} \left\{ -\frac{7 \alpha }{50 e^5}-\frac{7}{50} \beta  \bar{c} \bar{c}^* e^{-5+2 i \tau _* \omega _*} \right\}\\
			g_{21} &=  \tau_* \bar{D} \left\lbrace  -\frac{7 \alpha  \left(4 W_{11}^1(0)+ 2W_{20}^1(0)\right)}{50 e^5} - \frac{7}{50} \beta  \bar{c}^* e^{-5-i \tau _* \omega _*} \times \right.\\
			& \quad    \left(e^{2 i \tau _* \omega _*} \left(W_{20}^1(-1) \bar{c}+W_{20}^2(-1)\right)+2 c W_{11}^1(-1)+2 W_{11}^2(-1)\right) \bigg\}\\
		\end{aligned}
	\end{equation}

	Since we have $W_{20}(\vartheta)$ and $W_{11}(\vartheta)$ in $g_{21},$ we still need to calculate these terms. From (\ref{eq:FDE2}) and (\ref{eq:W1}), we get
	\begin{equation}  \label{eq:Wdot}
		\begin{aligned}
			\dot{W} = \dot{x_t} - \dot{z} - \dot{\bar{z}}\bar{q} &= 	 \left\{\begin{array}{ll} A W - 2\text{Re}\left\{\bar{q}^*(0) f_{0q}(\vartheta) \right\}, & \vartheta \in \lbrack-1,0), \\
				A W - 2\text{Re}\left\{\bar{q}^*(0) f_{0q}(\vartheta) \right\} + f_{0},  & \vartheta = 0,\end{array}\right.	\\
			& \stackrel{\text{def}}{=} A W + H(z, \bar{z}, \vartheta),
		\end{aligned}
	\end{equation}
	
	where
	\begin{equation} \label{eq:Hz1}
		H(z, \bar{z}, \vartheta) =  H_{20}(\vartheta)\frac{z^2}{2} + H_{11} (\vartheta)z \bar{z} + H_{02} (\vartheta) \frac{\bar{z}^2}{2} + \ldots
	\end{equation}
	Thus, we have
	\begin{equation}
		AW(t, \vartheta) - \dot{W}	=  -H(z, \bar{z}, \vartheta)= - H_{20}(\vartheta)\frac{z^2}{2} - H_{11} (\vartheta)z \bar{z} - H_{02} (\vartheta) \frac{\bar{z}^2}{2} + \ldots
	\end{equation}
	
	From (\ref{eq:W2}), we obtain

	\begin{equation}
		\begin{aligned}
			AW(t,\vartheta) &=  A	W_{20}(\vartheta)\frac{z^2}{2} + A W_{11} (\vartheta)z \bar{z} + A W_{02} (\vartheta) \frac{\bar{z}^2}{2} +  AW_{30} (\vartheta) \frac{z^3 }{6} + \ldots\\
			\dot{W} &= W_{z}\dot{z} + W_{\bar{z}}\dot{\bar{z}} = W_{20}(\vartheta)z\dot{z} + W_{11}(\vartheta)(\dot{z}\bar{z} + z\dot{\bar{z}}) + \ldots \\
			&= 2 i \omega_* \tau_* 	W_{20}(\vartheta)\frac{z^2}{2} + \ldots 
		\end{aligned}	
	\end{equation}

	Thus we have,
	\begin{equation}  \label{eq:A1}
		\begin{aligned}
			(A - 2i\omega_* \tau_*)W_{20}(\vartheta) &= - H_{20}(\vartheta),\\
			AW_{11}(\vartheta) &= - H_{11}(\vartheta) 		\\
		\end{aligned}
	\end{equation}

	For $ \vartheta \in [-1,0), $
	
	\begin{equation}
		H(z, \bar{z}, \vartheta) = 	- \bar{q}^*(0) f_{0q}(\vartheta) 	- \bar{q}^*(0) f_{0}(\vartheta) = 	- g(z,\bar{z})q(\vartheta) 
		-     	 \bar{g}(z,\bar{z})\bar{q}(\vartheta)
	\end{equation}

	Comparing the coefficients with (\ref{eq:Hz1}), we get
	\begin{equation}  \label{eq:Hs}
		\begin{aligned}
			H_{20}(\vartheta) &= -g_{20}q(\vartheta) - \bar{g}_{02}\bar{q}(\vartheta)\\
			H_{11}(\vartheta) &= -g_{11}q(\vartheta) - \bar{g}_{11}\bar{q}(\vartheta)\\
		\end{aligned}
	\end{equation}

	From (\ref{eq:A1}) and (\ref{eq:Hs}) and the definition of $ A, $ it follows that
	\begin{equation}
		\dot{W}_{20}(\vartheta) = 2i \omega_* \tau_* W_{20} (\vartheta) + g_{20}q(\vartheta) + \bar{g}_{02}\bar{q}(\vartheta)	
	\end{equation}
	
	Notice that 
	\begin{equation}
		q(\vartheta) = 
		\begin{pmatrix}
			1   \\
			c   \\
		\end{pmatrix} e^{i \omega_* \tau_* \vartheta},
	\end{equation}
	so
	\begin{equation}  \label{eq:W20}
		W_{20}(\vartheta) = \frac{ig_{20}}{\omega_* \tau_*} q(0)	e^{i \omega_* \tau_* \vartheta} +
		\frac{i\bar{g}_{02}}{3\omega_* \tau_*} \bar{q}(0)	e^{-i \omega_* \tau_* \vartheta} + E_1 	e^{2i \omega_* \tau_* \vartheta}
	\end{equation}
	where $ E_1 = \left(	E_1^{(1)}, 	E_1^{(2)} \right)  \in \mathbb{R}^2 $  is a two-dimensional constant vector.
	
	Similarly from (\ref{eq:A1}) and (\ref{eq:Hs}), we obtain
	
	\begin{equation} \label{eq:W11}
		W_{11}(\vartheta) = - \frac{ig_{11}}{\omega_* \tau_*} q(0)	e^{i \omega_* \tau_* \vartheta} +
		\frac{i\bar{g}_{11}}{\omega_* \tau_*} \bar{q}(0)	e^{-i \omega_* \tau_* \vartheta} + E_2
	\end{equation}
	where $ E_2 = \left(	E_2^{(1)}, 	E_2^{(2)} \right)  \in \mathbb{R}^2 $  is a two-dimensional constant vector.
	
	Next, we need to compute $ E_1 $ and $ E_2. $ From the definition of $ A $ and (\ref{eq:A1}), we obtain
	\begin{equation*}
		\int_{-1}^{0} d\eta(\vartheta) W_{20}(\vartheta) = 2i\omega_* \tau_* W_{20}(0) - H_{20}(0)	
	\end{equation*}
	and 
	\begin{equation} \label{eq:inteta1}
		\int_{-1}^{0} d\eta(\vartheta) W_{11}(\vartheta) =  - H_{11}(0)	
	\end{equation}
	where $ \eta(\vartheta) = \eta(0, \vartheta). $
	
	By (\ref{eq:Wdot}), we have
	
	\begin{equation}  \label{eq:H20}
		H_{20}(0) = -g_{20}q(0) - \bar{g}_{02}\bar{q}(0) + 2\tau_* 
		\begin{pmatrix}
			-7 \alpha /\left(50 e^5\right)  \\
			\frac{-7}{50 e^5}  \beta  c  e^{-2 i \tau _* \omega _*}  \\
		\end{pmatrix}
	\end{equation}
	
	and
	\begin{equation}  \label{eq:H11}
		H_{11}(0) = -g_{11}q(0) - \bar{g}_{11}\bar{q}(0) + 2\tau_* 
		\begin{pmatrix}
			-7 \alpha /\left(25 e^5\right)   \\
			-7 \beta    \text{Re}\{c\} /\left(50 e^5\right) \\
		\end{pmatrix}
	\end{equation}

	Substituting (\ref{eq:W20}) and (\ref{eq:H20}) into (\ref{eq:inteta1}), and noticing that
	
	\begin{equation}
		\left( i \omega_* \tau_* I - \int_{-1}^{0} 	e^{i \omega_* \tau_* \vartheta} d\eta(\vartheta) \right) q(0) = 0
	\end{equation}
	
	and
	
	\begin{equation}
		\left(- i \omega_* \tau_* I - \int_{-1}^{0} 	e^{-i \omega_* \tau_* \vartheta} d\eta(\vartheta) \right) \bar{q}(0) = 0
	\end{equation}
	
	we obtain,
	\begin{equation}
		\left(2 i \omega_* \tau_* I - \int_{-1}^{0} 	e^{2i \omega_* \tau_* \vartheta} d\eta(\vartheta) \right) E_1 = 2\tau_* 
		\begin{pmatrix}
			-7 \alpha /\left(50 e^5\right)  \\
			\frac{-7}{50 e^5}  \beta  c  e^{-2 i \tau _* \omega _*}  \\
		\end{pmatrix}
	\end{equation}
	
	This leads to 
	\begin{equation*}
		\begin{pmatrix}
			\frac{7}{25} \beta  e^{\frac{\gamma }{20}-5} \gamma +\frac{7}{50} \alpha  x_* e^{\frac{\gamma }{20}-2i \tau _* \omega _*-5}+2i \omega _* & \frac{7 \beta ^2 e^{\frac{\gamma }{20}-5} \gamma ^2}{1000 \alpha }+\frac{7 \beta  \gamma  x_* e^{\frac{\gamma }{20}-2i \tau _* \omega _*-5}}{1000} \\
			\frac{7}{50} \beta  e^{\frac{\gamma }{20}-5} \gamma +\frac{7}{50} \beta  y_* e^{\frac{\gamma }{20}-2i \tau _* \omega _*-5} & \frac{7 \beta ^2 e^{\frac{\gamma }{20}-5} \gamma  (\gamma +20)}{1000 \alpha }+\frac{7 \beta ^2 \gamma  y_* e^{\frac{\gamma }{20}-2i \tau _* \omega _*-5}}{1000 \alpha }+2i \omega _* \\
		\end{pmatrix}
	\end{equation*}
	\begin{equation}
		\times E_1 = 2
		\begin{pmatrix}
			-7 \alpha /\left(50 e^5\right)  \\
			\frac{-7}{50 e^5}  \beta  c  e^{-2 i \tau _* \omega _*}  \\
		\end{pmatrix}
	\end{equation}
	
	It follows that 
	\begin{equation}
		E_1^{(1)} = \frac{2}{A}	\left| 
		\begin{array}{cc}
			-7 \alpha /\left(50 e^5\right) &  \frac{7 \beta ^2 e^{\frac{\gamma }{20}-5} \gamma ^2}{1000 \alpha }+\frac{7 \beta  \gamma  x_* e^{\frac{\gamma }{20}-2i \tau _* \omega _*-5}}{1000} \\
			\frac{-7}{50 e^5}  \beta  c  e^{-2 i \tau _* \omega _*} & \frac{7 \beta ^2 e^{\frac{\gamma }{20}-5} \gamma  (\gamma +20)}{1000 \alpha }+\frac{7 \beta ^2 \gamma  y_* e^{\frac{\gamma }{20}-2i \tau _* \omega _*-5}}{1000 \alpha }+2i \omega _* \\
		\end{array}
		\right| 
	\end{equation}
	
	and 
	
	\begin{equation}
		E_1^{(2)} = \frac{2}{A}	\left| 
		\begin{array}{cc}
			\frac{7}{25} \beta  e^{\frac{\gamma }{20}-5} \gamma +\frac{7}{50} \alpha  x_* e^{\frac{\gamma }{20}-2i \tau _* \omega _*-5}+2i \omega _* &	-7 \alpha /\left(50 e^5\right) \\
			\frac{7}{50} \beta  e^{\frac{\gamma }{20}-5} \gamma +\frac{7}{50} \beta  y_* e^{\frac{\gamma }{20}-2i \tau _* \omega _*-5}	&	\frac{-7}{50 e^5}  \beta  c  e^{-2 i \tau _* \omega _*}  \\
		\end{array}
		\right| 
	\end{equation}

	where $ A =  $
	
	\begin{equation*} \left| 
		\begin{array}{cc}
			\frac{7}{25} \beta  e^{\frac{\gamma }{20}-5} \gamma +\frac{7}{50} \alpha  x_* e^{\frac{\gamma }{20}-2i \tau _* \omega _*-5}+2i \omega _* & \frac{7 \beta ^2 e^{\frac{\gamma }{20}-5} \gamma ^2}{1000 \alpha }+\frac{7 \beta  \gamma  x_* e^{\frac{\gamma }{20}-2i \tau _* \omega _*-5}}{1000} \\
			\frac{7}{50} \beta  e^{\frac{\gamma }{20}-5} \gamma +\frac{7}{50} \beta  y_* e^{\frac{\gamma }{20}-2i \tau _* \omega _*-5} & \frac{7 \beta ^2 e^{\frac{\gamma }{20}-5} \gamma  (\gamma +20)}{1000 \alpha }+\frac{7 \beta ^2 \gamma  y_* e^{\frac{\gamma }{20}-2i \tau _* \omega _*-5}}{1000 \alpha }+2i \omega _* \\
		\end{array} \right|
	\end{equation*}

	Similarly, substituting (\ref{eq:W11}) and (\ref{eq:H11}) into (\ref{eq:inteta1}), we get
	
	\begin{equation*}
		\left(
		\begin{array}{cc}
			\frac{7}{50} \alpha  e^{\frac{\gamma }{20}-5} x_*-\frac{7}{25} \beta  e^{\frac{\gamma }{20}-5} \gamma  & \frac{7 \beta  e^{\frac{\gamma }{20}-5} \gamma  x_*}{1000}-\frac{7 \beta ^2 e^{\frac{\gamma }{20}-5} \gamma ^2}{1000 \alpha } \\
			\frac{7}{50} \beta  e^{\frac{\gamma }{20}-5} y_*-\frac{7}{50} \beta  e^{\frac{\gamma }{20}-5} \gamma  & \frac{7 \beta ^2 e^{\frac{\gamma }{20}-5} \gamma  y_*}{1000 \alpha }-\frac{7 \beta ^2 e^{\frac{\gamma }{20}-5} \gamma  (\gamma +20)}{1000 \alpha } \\
		\end{array}
		\right)
	\end{equation*}
	\begin{equation}
		\times E_2 = 2
		\begin{pmatrix}
			-7 \alpha /\left(25 e^5\right)  \\
			-7 \beta    \text{Re}\{c\} /\left(50 e^5\right)  \\
		\end{pmatrix}
	\end{equation}

	It follows that 
	\begin{equation}
		E_2^{(1)} = \frac{2}{B}	\left| 
		\begin{array}{cc}
			-7 \alpha /\left(25 e^5\right) &  \frac{7 \beta  e^{\frac{\gamma }{20}-5} \gamma  x_*}{1000}-\frac{7 \beta ^2 e^{\frac{\gamma }{20}-5} \gamma ^2}{1000 \alpha } \\
			-7 \beta    \text{Re}\{c\} /\left(50 e^5\right)  & \frac{7 \beta ^2 e^{\frac{\gamma }{20}-5} \gamma  y_*}{1000 \alpha }-\frac{7 \beta ^2 e^{\frac{\gamma }{20}-5} \gamma  (\gamma +20)}{1000 \alpha } \\
		\end{array}
		\right| 
	\end{equation}
	
	and 
	
	\begin{equation}
		E_2^{(2)} = \frac{2}{B}	\left| 
		\begin{array}{cc}
			\frac{7}{50} \alpha  e^{\frac{\gamma }{20}-5} x_*-\frac{7}{25} \beta  e^{\frac{\gamma }{20}-5} \gamma &-7 \alpha /\left(25 e^5\right) \\
			\frac{7}{50} \beta  e^{\frac{\gamma }{20}-5} y_*-\frac{7}{50} \beta  e^{\frac{\gamma }{20}-5} \gamma	&		-7 \beta    \text{Re}\{c\} /\left(50 e^5\right)   \\
		\end{array}
		\right| 
	\end{equation}

	where 
	
	\begin{equation*} B = \left| 
		\begin{array}{cc}
			\frac{7}{50} \alpha  e^{\frac{\gamma }{20}-5} x_*-\frac{7}{25} \beta  e^{\frac{\gamma }{20}-5} \gamma  & \frac{7 \beta  e^{\frac{\gamma }{20}-5} \gamma  x_*}{1000}-\frac{7 \beta ^2 e^{\frac{\gamma }{20}-5} \gamma ^2}{1000 \alpha } \\
			\frac{7}{50} \beta  e^{\frac{\gamma }{20}-5} y_*-\frac{7}{50} \beta  e^{\frac{\gamma }{20}-5} \gamma  & \frac{7 \beta ^2 e^{\frac{\gamma }{20}-5} \gamma  y_*}{1000 \alpha }-\frac{7 \beta ^2 e^{\frac{\gamma }{20}-5} \gamma  (\gamma +20)}{1000 \alpha } \\
		\end{array}
		\right|
	\end{equation*}

	Thus, we can determine $ W_{20}(\vartheta) $ and $ W_{11} (\vartheta) $ from (\ref{eq:W20}) and (\ref{eq:W11}). Furthermore, $ g_{21} $ in (\ref{eq:gcoeff}) can be expressed by the parameters and delay. Thus, we can compute the following values:
	
	\begin{equation}
		\begin{aligned}
			c_1(0) &= \frac{i}{2 \omega_* \tau_*} \left(g_{20}g_{11} - 2 \left|g_{11} \right|^2 -  \frac{ \left|g_{02} \right|^2}{3}  \right) + \frac{g_{21}}{2}, 	\\
			\mu_{2} &= - \frac{\text{Re}\left\lbrace c_1(0) \right\rbrace }{\text{Re}\left\lbrace \lambda^{'} (\tau_*) \right\rbrace},\\
			\beta_2 &= 2 \text{Re}\left\lbrace c_1(0) \right\rbrace ,	\\
			T_2 &= - \frac{\text{Im}\left\lbrace c_1(0) \right\rbrace + \mu_{2} \text{Im}\left\lbrace \lambda^{'} (\tau_*) \right\rbrace}{\omega_* \tau_*}
		\end{aligned}
	\end{equation}

	which determines the qualities of bifurcation periodic solution in the center manifold at the critical value $ \tau_* .$

	Here $\mu_{2}$ determines the direction of the Hopf bifurcation. If $\mu_{2} > 0$, then the bifurcation is supercritical and the bifurcation periodic solutions exist for $\tau > \tau_*.$ $\beta_2$ determines the stability of the bifurcation periodic solutions: it is asymptotically stable if $\beta_2 < 0.$ $T_2$ determines the period of the bifurcation periodic solutions; the period increases if $T_2 > 0.$


	\section{Numerical Simulations}
	\label{numerical}

	In  section \ref{stability}, we derived that the positive equilibrium  $E_{*}(x_{*}, y_{*})$  is asymptotically stable for $0 \leq \tau < \tau_*$ and unstable for $\tau > \tau_*$ and the system (\ref{eq:sys1}) undergoes a Hopf bifurcation when $\tau = \tau_*.$ Here we will give the dynamic behaviors of the system with different values of the parameters $\alpha$ and $\beta$ with different time delay $\tau.$ 	The simulation results of  system  (\ref{eq:sys1}) are plotted using the software Mathematica Version 12.1 \cite{Mathematica12}.
	
	\subsection{The dynamic behavior with $\alpha = 0.5 \textrm{ and } \beta = 0.8$}

	Here we study the dynamic behavior of system (\ref{eq:sys1}) by changing the time delay $\tau$ with $\alpha = 0.5$ and $\beta = 0.8.$ All the simulations have initial conditions of  $x(t) = 35.5$ and $y(t) = 26.5.$ This has a unique positive equilibrium at $(x_*,y_*) = (29.1842, 18.2401).$  We observe that the equilibrium is stable for $\tau < 30.8017$ and unstable for $\tau > 30.8017.$ At a Hopf bifurcation, no new equilibrium arise. A periodic solution emerges at the equilibrium point as $\tau$ passes through the bifurcation value. 
	\begin{figure}[H]
		\centering
		\begin{subfigure}[b]{0.45\textwidth}
			\centering
			\includegraphics[width=1\textwidth]{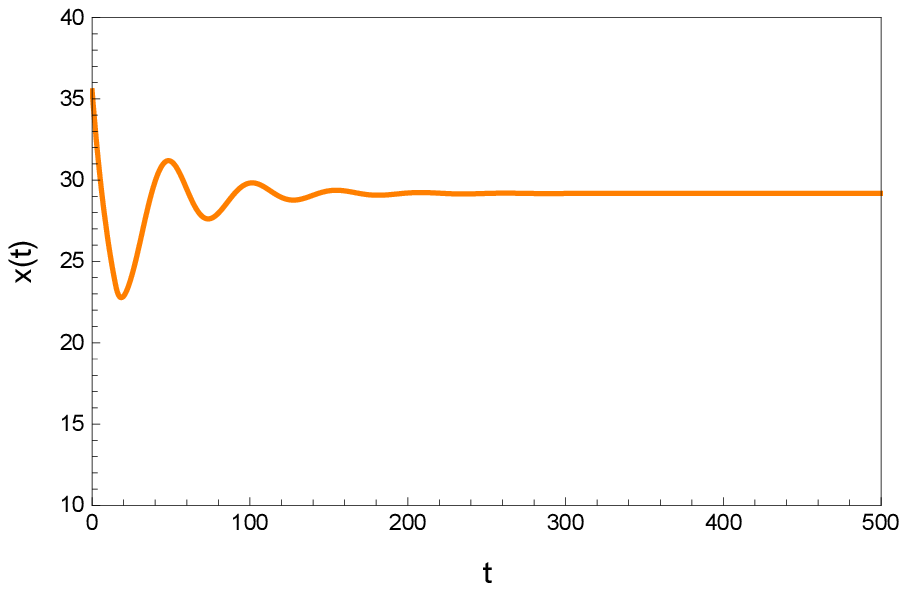}		
			\label{fig:xvst_alpha0p5beta0p8tau15}
		\end{subfigure}
		\hfill
		\begin{subfigure}[b]{0.45\textwidth}
			\centering
			\includegraphics[width=1\textwidth]{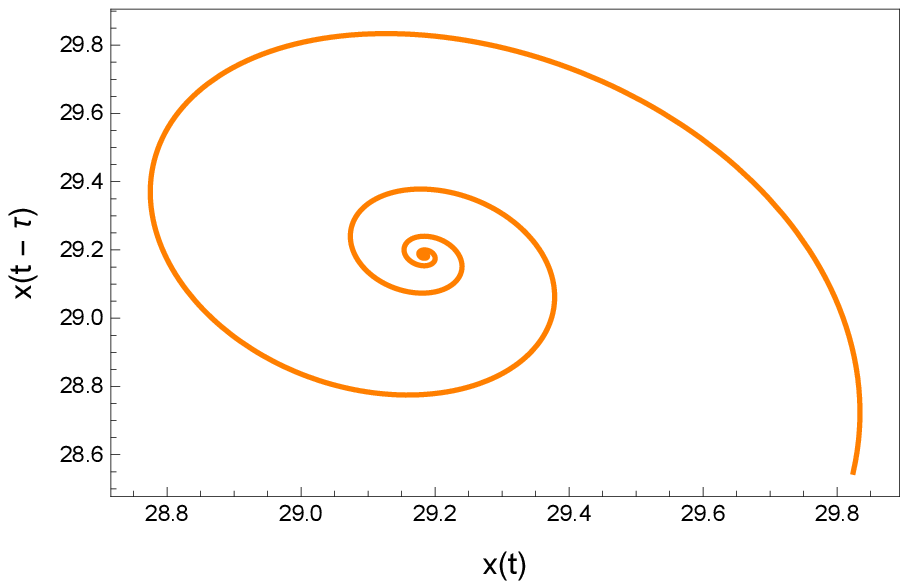}		
			\label{fig:xvsxtaut_alpha0p5beta0p8tau15}
		\end{subfigure}
		\hfill
		\begin{subfigure}[b]{0.45\textwidth}
			\centering
			\includegraphics[width=1\textwidth]{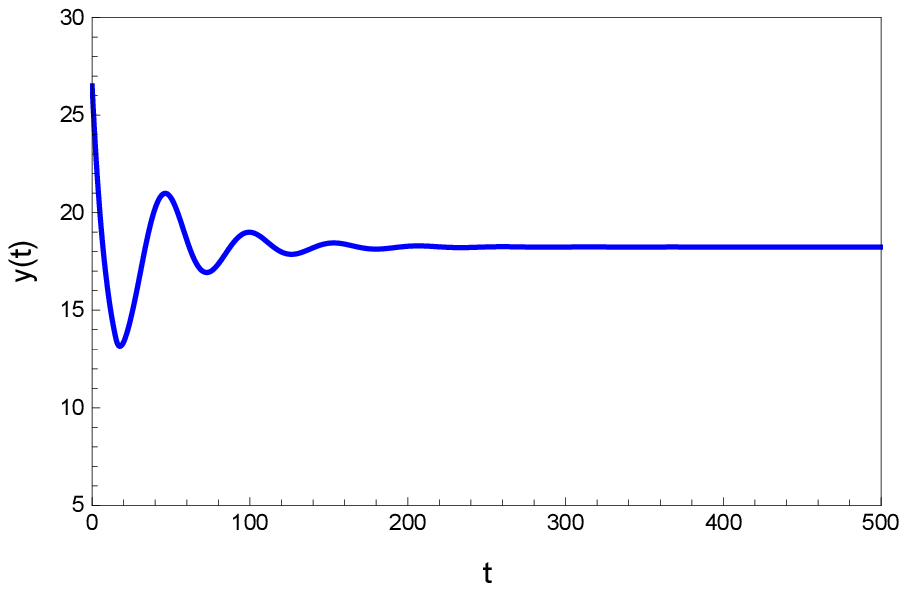}		
			\label{fig:yvst_alpha0p5beta0p8tau15}
		\end{subfigure}
		\hfill
		\begin{subfigure}[b]{0.45\textwidth}
			\centering
			\includegraphics[width=1\textwidth]{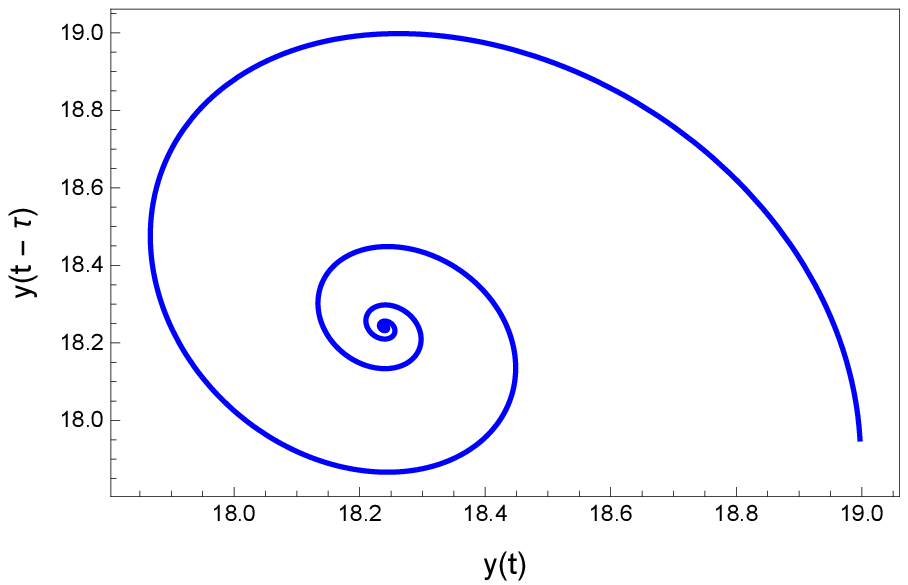}		
			\label{fig:yvsytau_alpha0p5beta0p8tau15}
		\end{subfigure}
		\hfill
		\begin{subfigure}[b]{0.45\textwidth}
			\centering
			\includegraphics[width=1\textwidth]{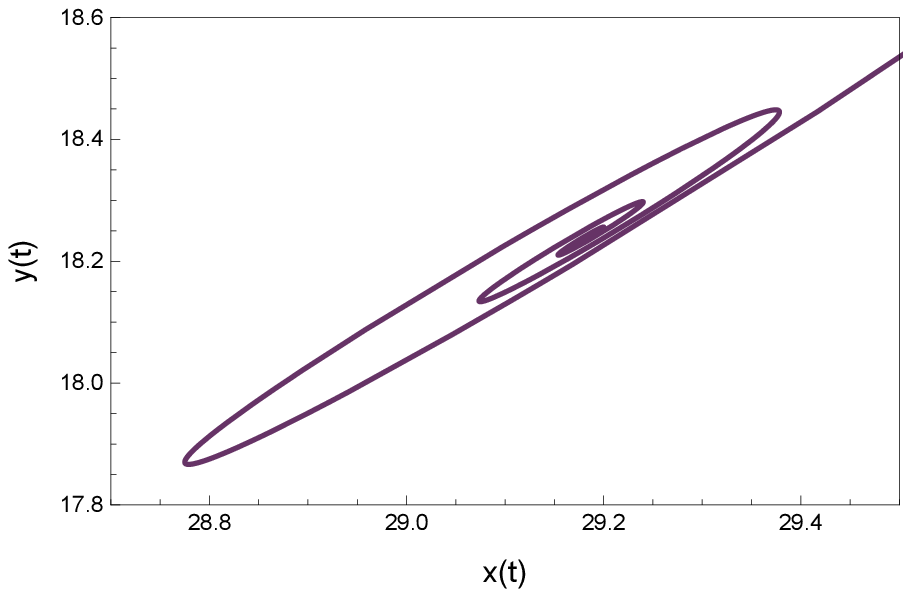}		
			\label{fig:xvsy_alpha0p5beta0p8tau15}
		\end{subfigure}
		\hfill
		\begin{subfigure}[b]{0.45\textwidth}
			\centering
			\includegraphics[width=1\textwidth]{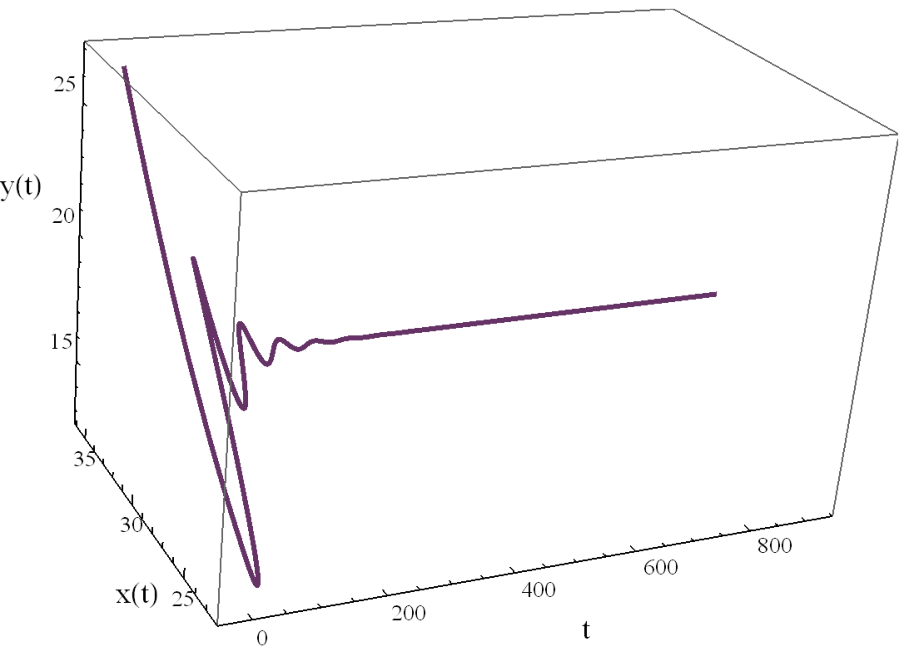}		
			\label{fig:xvsyvst_alpha0p5beta0p8tau15}
		\end{subfigure}
		\hfill
		\begin{subfigure}[b]{0.45\textwidth}
			\includegraphics[width=1\textwidth]{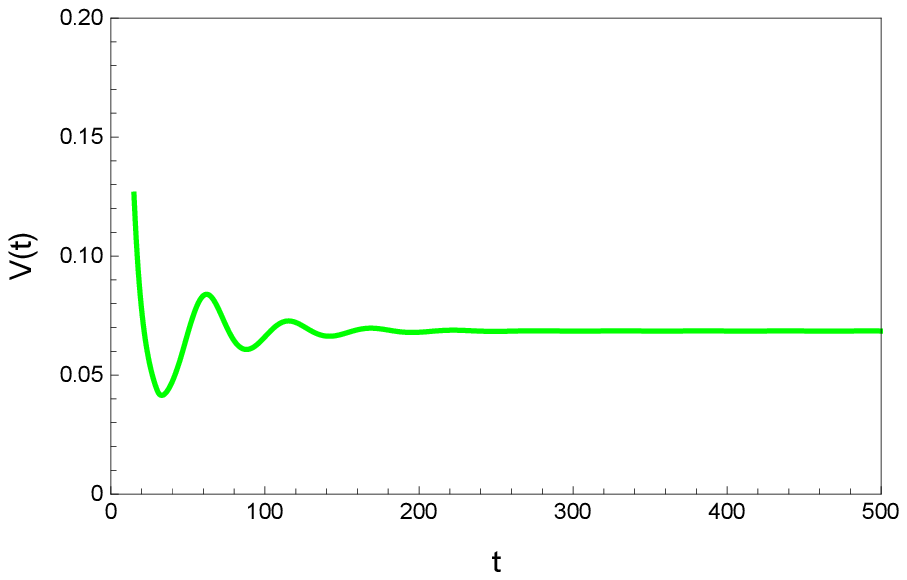}		
			\label{fig:Vvst_alpha0p5beta0p8tau15}
		\end{subfigure}
		\caption{The time series plots, phase plots and the ventilation plot with  $\alpha = 0.5, \; \beta = 0.8 $ and $ \tau = 15.$}
		\label{fig:sys1_alpha0p5_beta0p8_tau15}
		
	\end{figure}

	\begin{figure}[H]
		\centering
		\begin{subfigure}[b]{0.45\textwidth}
			\centering
			\includegraphics[width=1\textwidth]{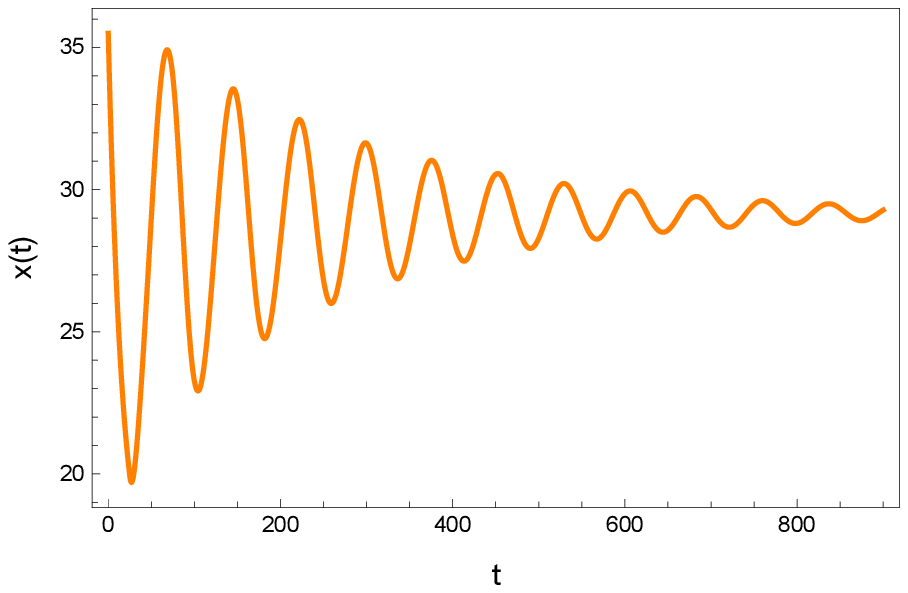}		
			\label{fig:xvst_alpha0p5beta0p8tau25}
		\end{subfigure}
		\hfill
		\begin{subfigure}[b]{0.45\textwidth}
			\centering
			\includegraphics[width=1\textwidth]{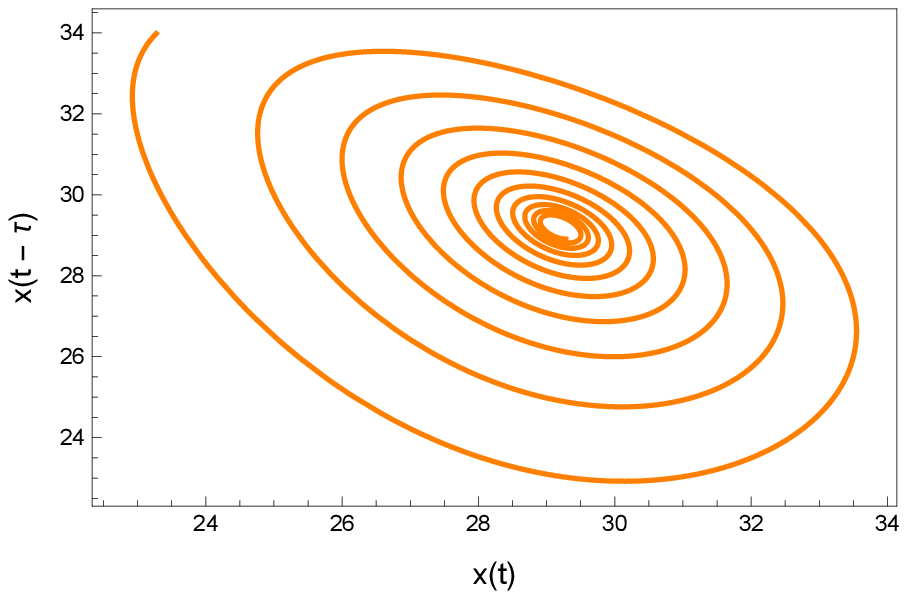}		
			\label{fig:xvsxtaut_alpha0p5beta0p8tau25}
		\end{subfigure}
		\hfill
		\begin{subfigure}[b]{0.45\textwidth}
			\centering
			\includegraphics[width=1\textwidth]{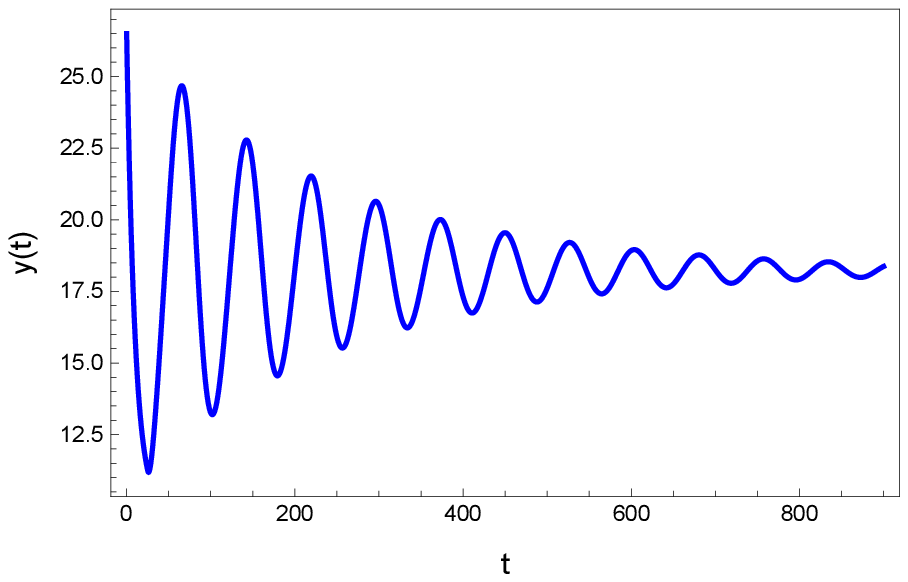}		
			\label{fig:yvst_alpha0p5beta0p8tau25}
		\end{subfigure}
		\hfill
		\begin{subfigure}[b]{0.45\textwidth}
			\centering
			\includegraphics[width=1\textwidth]{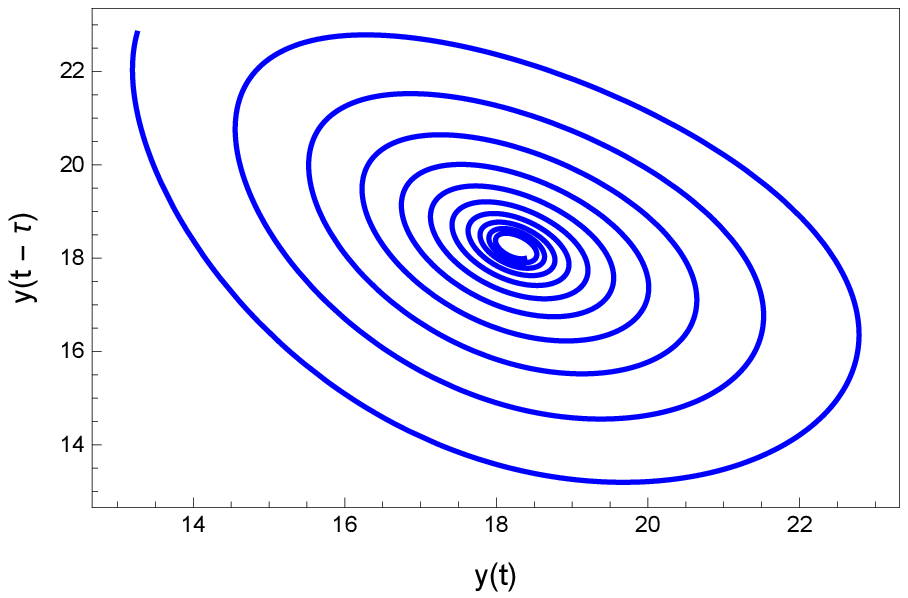}		
			\label{fig:yvsytau_alpha0p5beta0p8tau25}
		\end{subfigure}
		\hfill
		\begin{subfigure}[b]{0.45\textwidth}
			\centering
			\includegraphics[width=1\textwidth]{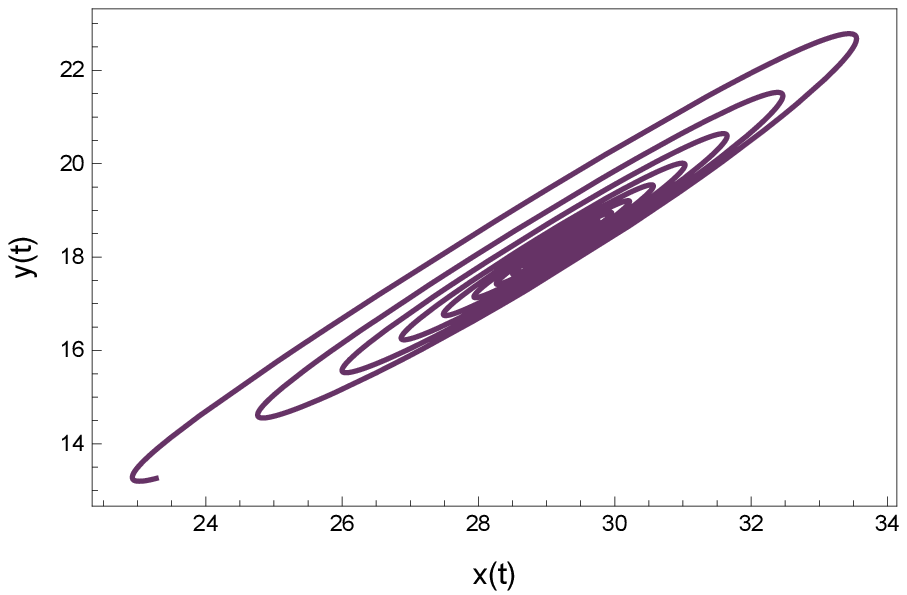}		
			\label{fig:xvsy_alpha0p5beta0p8tau25}
		\end{subfigure}
		\hfill
		\begin{subfigure}[b]{0.45\textwidth}
			\centering
			\includegraphics[width=1\textwidth]{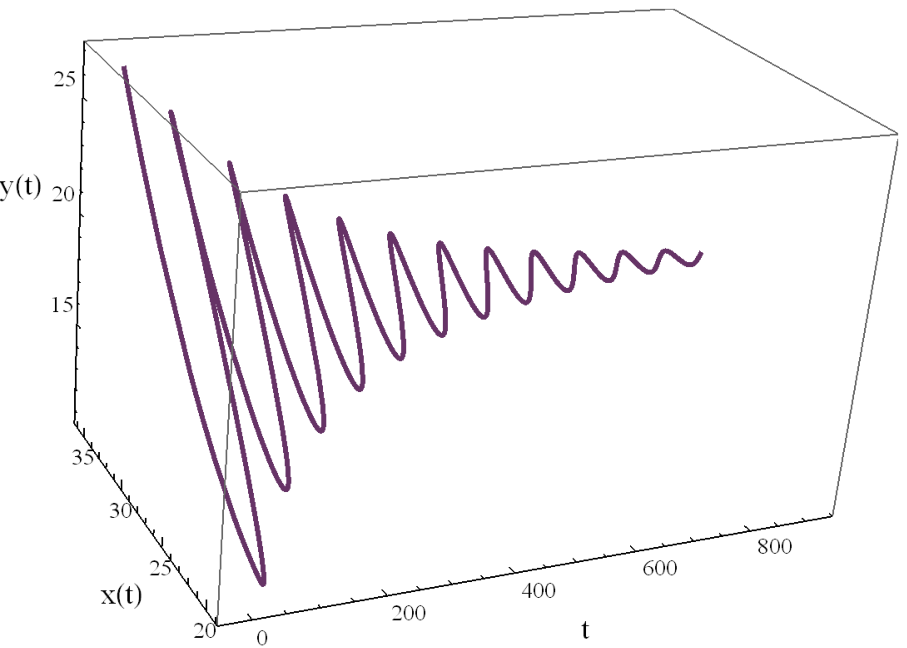}		
			\label{fig:xvsyvst_alpha0p5beta0p8tau25}
		\end{subfigure}
		\hfill
		\begin{subfigure}[b]{0.45\textwidth}
			\includegraphics[width=1\textwidth]{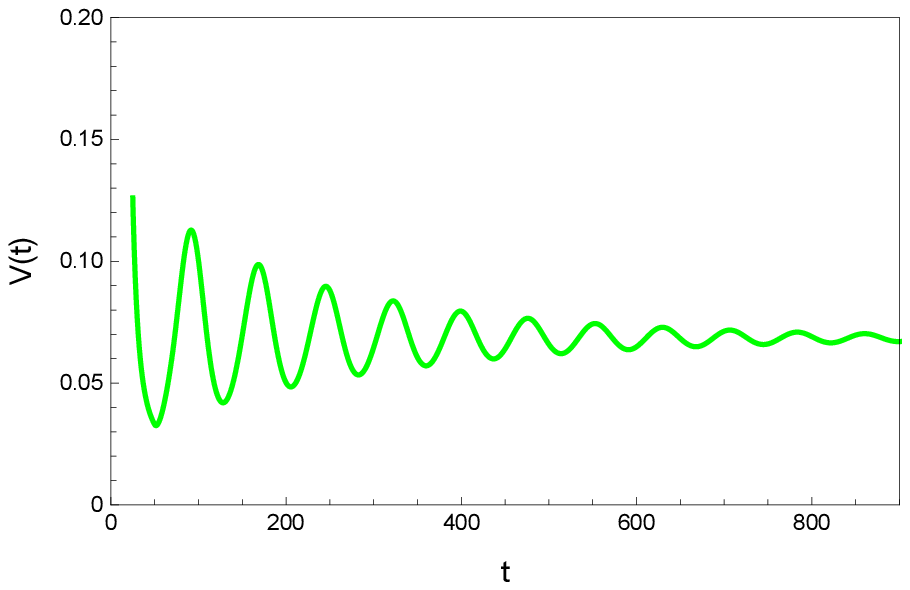}		
			\label{fig:Vvst_alpha0p5beta0p8tau25}
		\end{subfigure}
		\caption{The time series plots, phase plots and the ventilation plot with $\alpha = 0.5, \; \beta = 0.8 $ and $ \tau = 25.  $}
		\label{fig:sys1_alpha0p5_beta0p8_tau25}
		
	\end{figure}
	
	\begin{figure}[H]
		\centering
		\begin{subfigure}[b]{0.45\textwidth}
			\centering
			\includegraphics[width=1\textwidth]{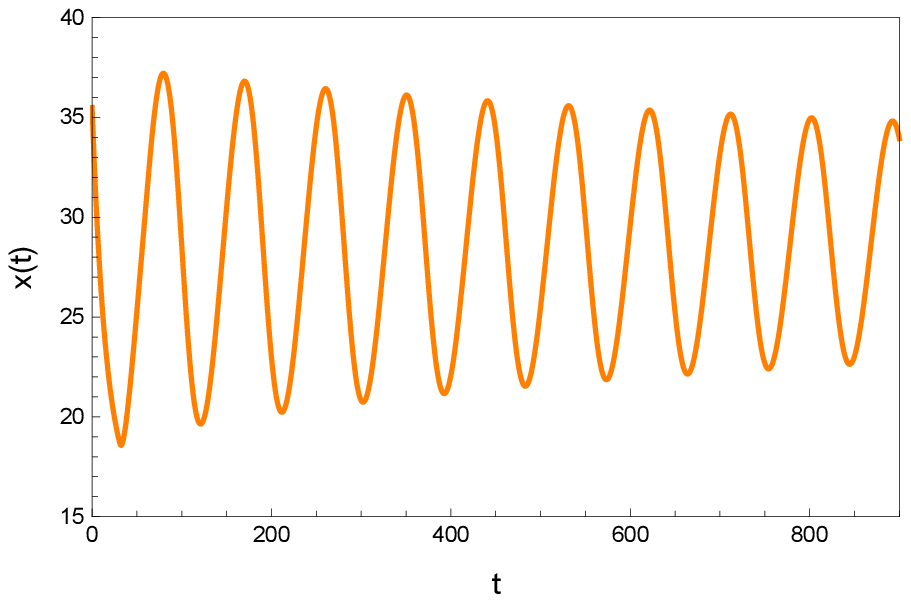}		
			\label{fig:xvst_alpha0p5beta0p8tau30p8}
		\end{subfigure}
		\hfill
		\begin{subfigure}[b]{0.45\textwidth}
			\centering
			\includegraphics[width=1\textwidth]{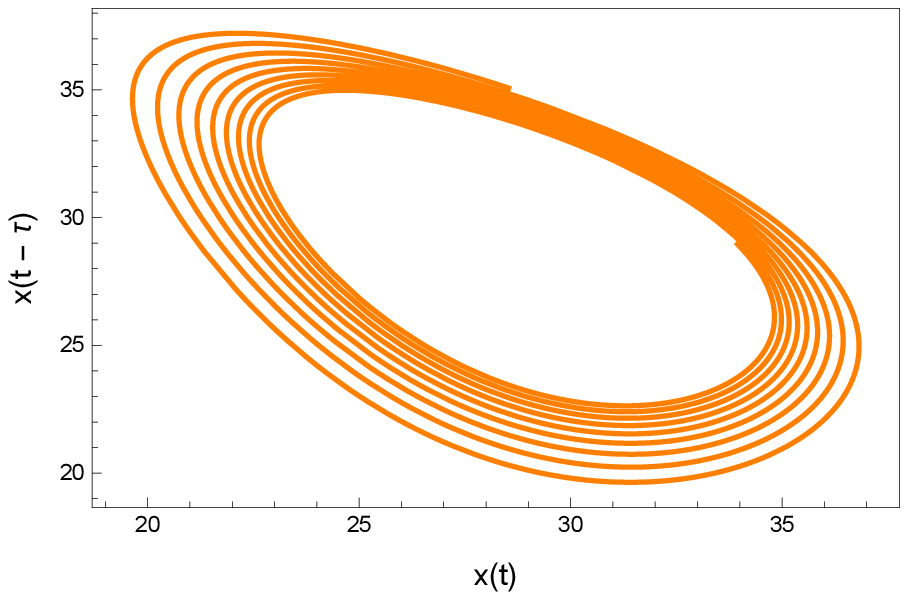}		
			\label{fig:xvsxtaut_alpha0p5beta0p8tau30p8}
		\end{subfigure}
		\hfill
		\begin{subfigure}[b]{0.45\textwidth}
			\centering
			\includegraphics[width=1\textwidth]{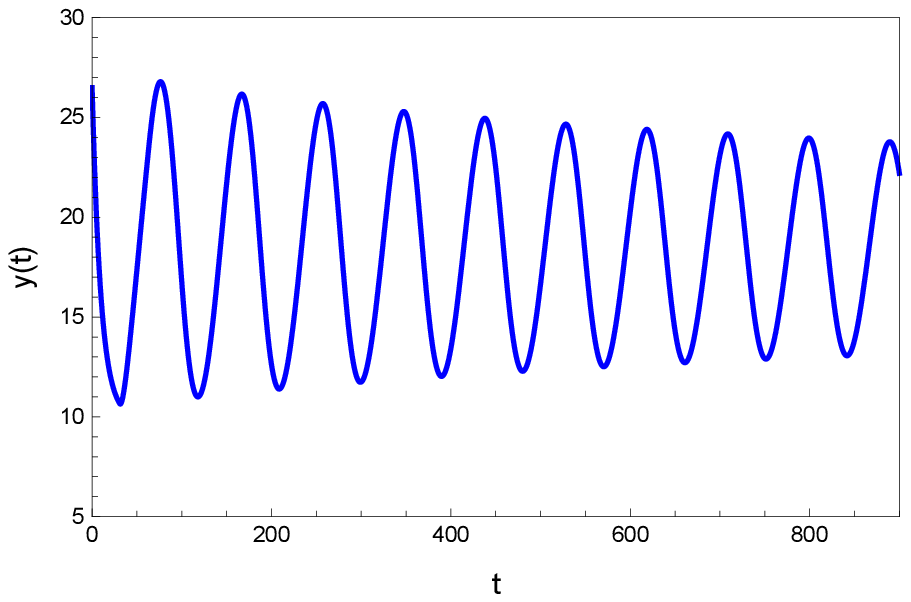}		
			\label{fig:yvst_alpha0p5beta0p8tau30p8}
		\end{subfigure}
		\hfill
		\begin{subfigure}[b]{0.45\textwidth}
			\centering
			\includegraphics[width=1\textwidth]{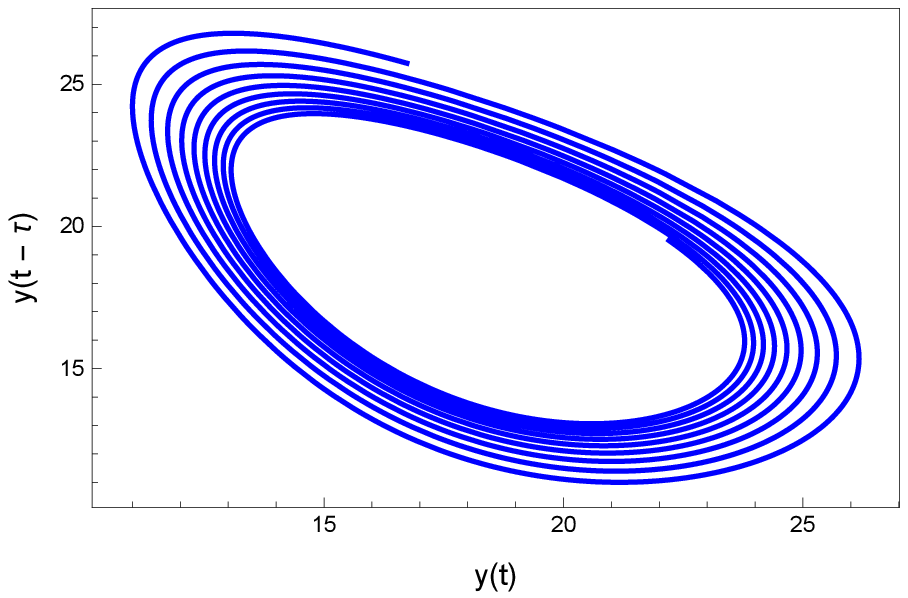}		
			\label{fig:yvsytau_alpha0p5beta0p8tau30p8}
		\end{subfigure}
		\hfill
		\begin{subfigure}[b]{0.45\textwidth}
			\centering
			\includegraphics[width=1\textwidth]{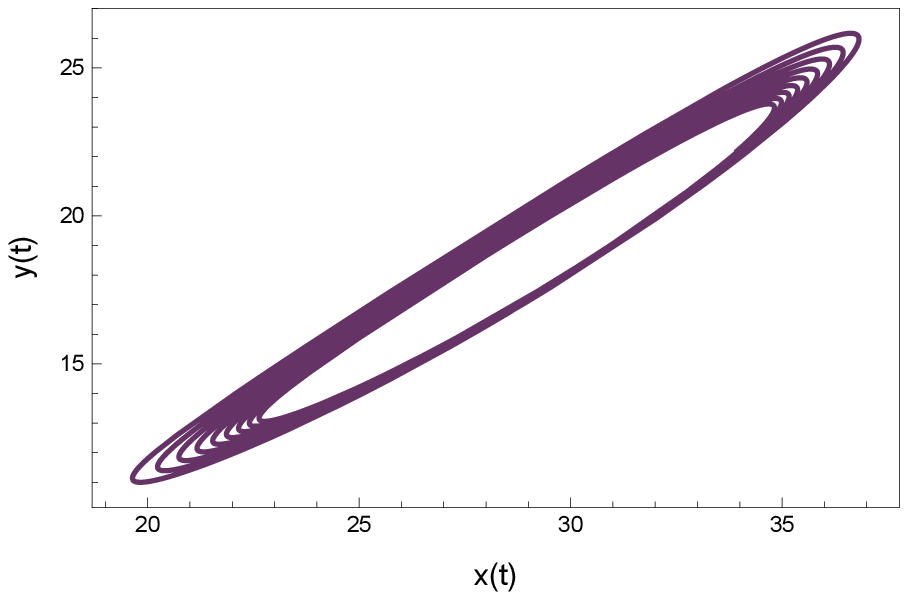}		
			\label{fig:xvsy_alpha0p5beta0p8tau30p8}
		\end{subfigure}
		\hfill
		\begin{subfigure}[b]{0.45\textwidth}
			\centering
			\includegraphics[width=1\textwidth]{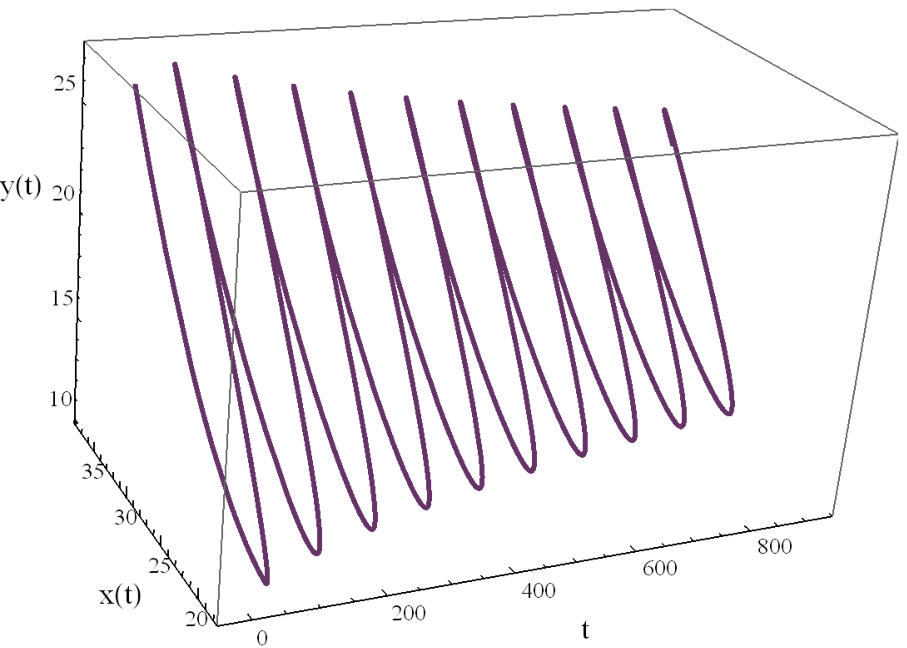}		
			\label{fig:xvsyvst_alpha0p5beta0p8tau30p8}
		\end{subfigure}
		\hfill
		\begin{subfigure}[b]{0.45\textwidth}
			\includegraphics[width=1\textwidth]{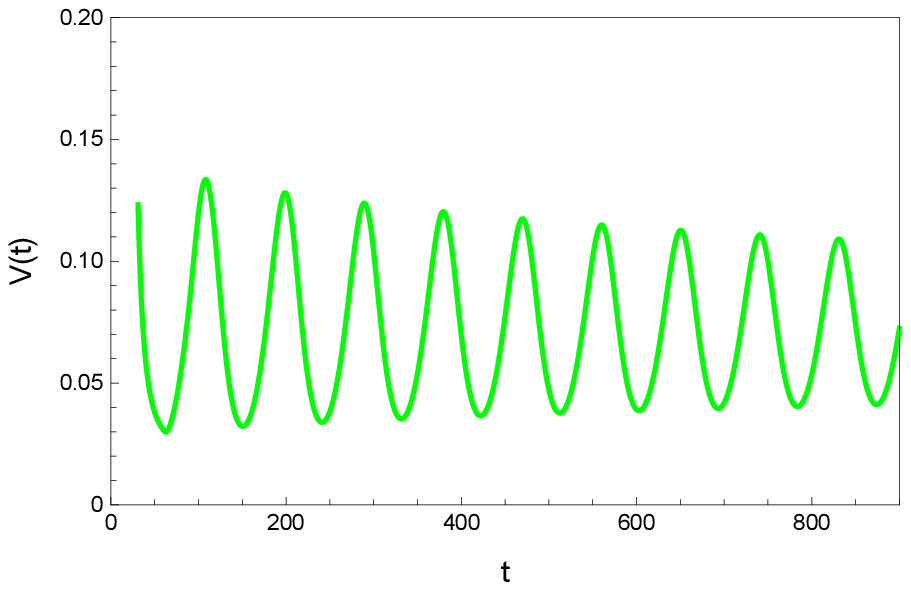}		
			\label{fig:Vvst_alpha0p5beta0p8tau30p8}
		\end{subfigure}
		\caption{The time series plots, phase plots and the ventilation plot with $\alpha = 0.5, \; \beta = 0.8 $ and $ \tau = 30.81.  $}
		\label{fig:sys1_alpha0p5_beta0p8_tau30p8}
		
	\end{figure}

	\begin{figure}[H]
		\centering
		\begin{subfigure}[b]{0.45\textwidth}
			\centering
			\includegraphics[width=1\textwidth]{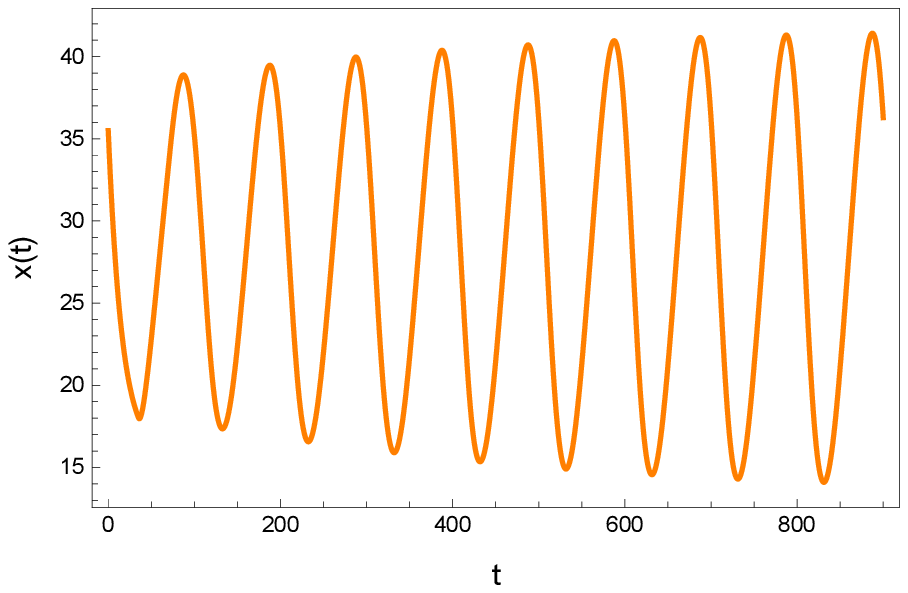}		
			\label{fig:xvst_alpha0p5beta0p8tau35}
		\end{subfigure}
		\hfill
		\begin{subfigure}[b]{0.45\textwidth}
			\centering
			\includegraphics[width=1\textwidth]{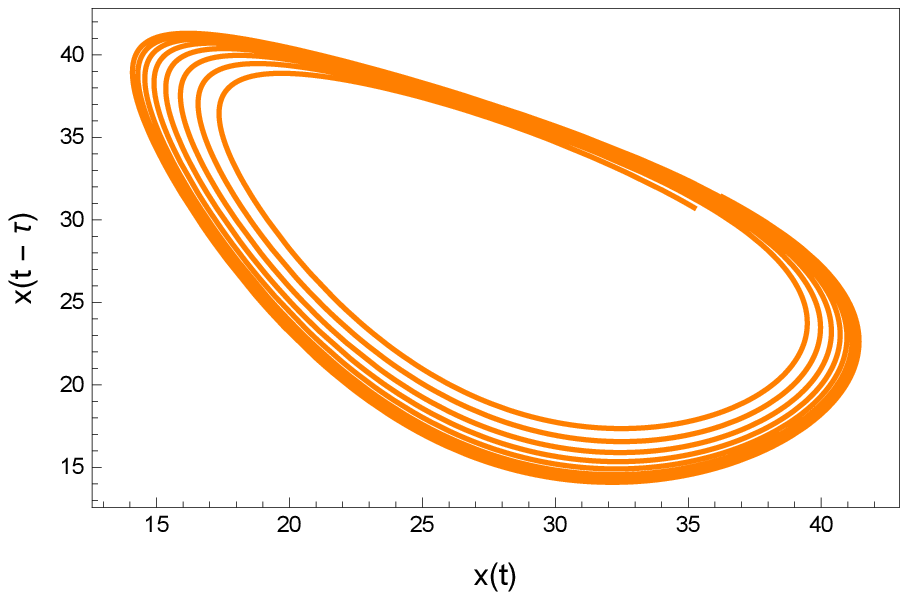}		
			\label{fig:xvsxtaut_alpha0p5beta0p8tau35}
		\end{subfigure}
		\hfill
		\begin{subfigure}[b]{0.45\textwidth}
			\centering
			\includegraphics[width=1\textwidth]{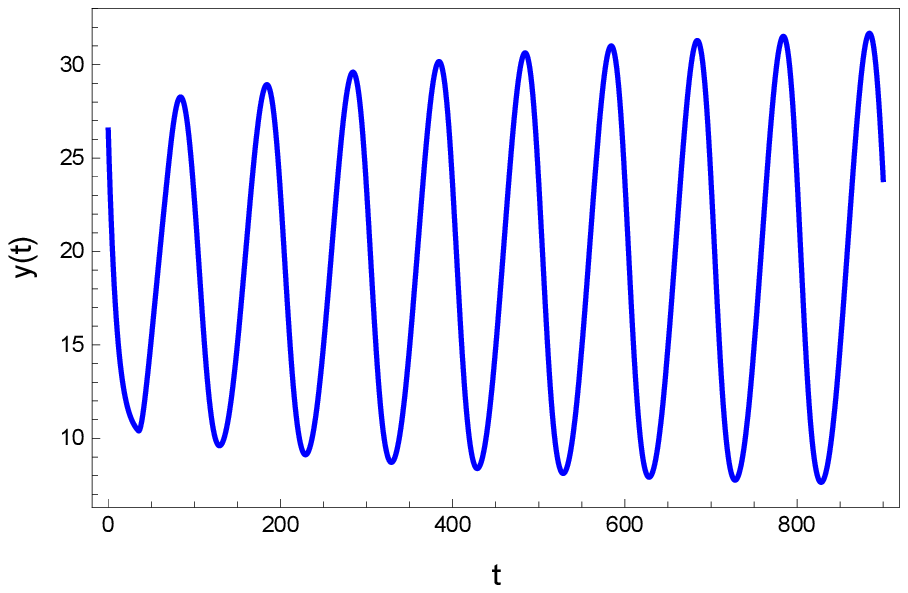}		
			\label{fig:yvst_alpha0p5beta0p8tau35}
		\end{subfigure}
		\hfill
		\begin{subfigure}[b]{0.45\textwidth}
			\centering
			\includegraphics[width=1\textwidth]{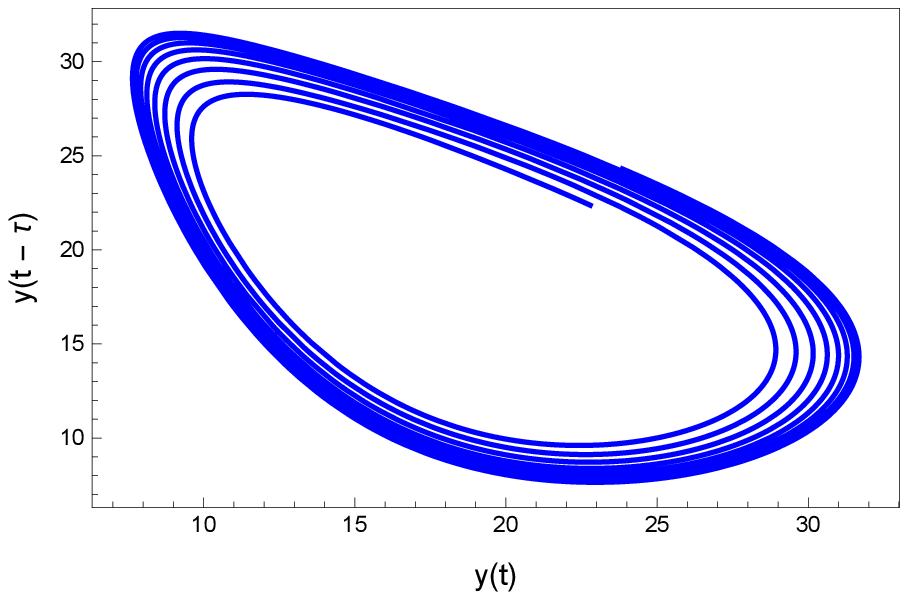}		
			\label{fig:yvsytau_alpha0p5beta0p8tau35}
		\end{subfigure}
		\hfill
		\begin{subfigure}[b]{0.45\textwidth}
			\centering
			\includegraphics[width=1\textwidth]{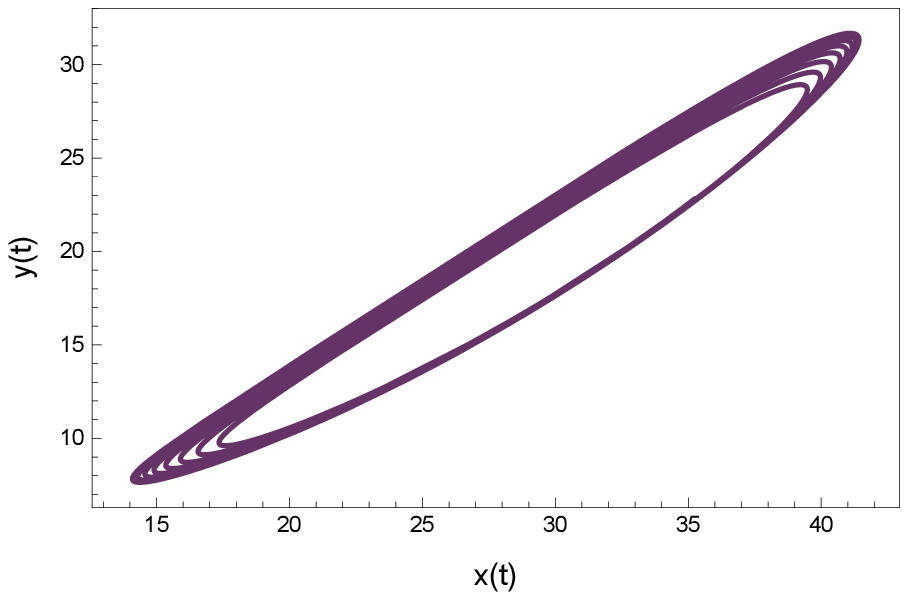}		
			\label{fig:xvsy_alpha0p5beta0p8tau35}
		\end{subfigure}
		\hfill
		\begin{subfigure}[b]{0.45\textwidth}
			\centering
			\includegraphics[width=1\textwidth]{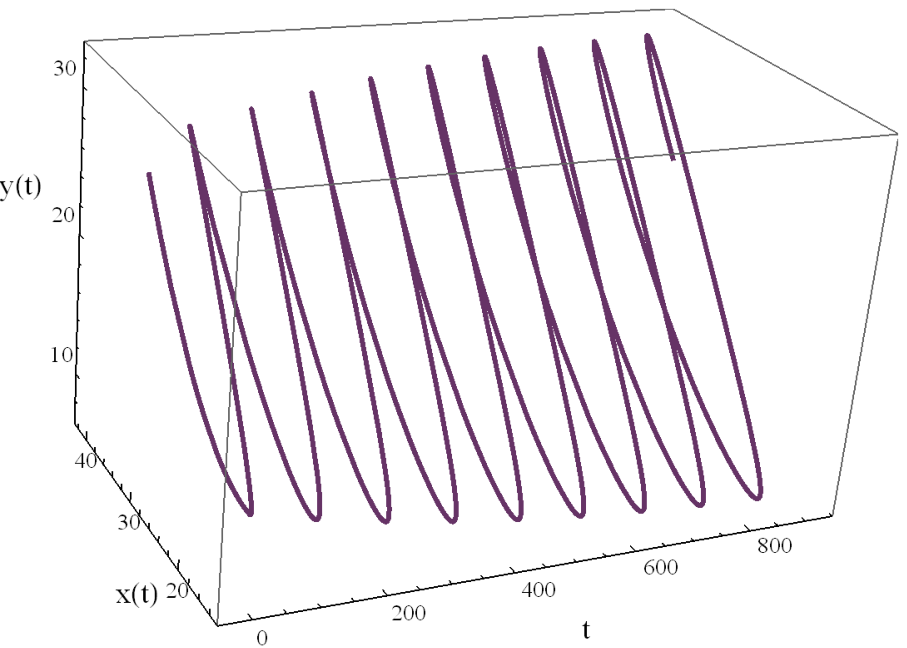}		
			\label{fig:xvsyvst_alpha0p5beta0p8tau35}
		\end{subfigure}
		\hfill
		\begin{subfigure}[b]{0.45\textwidth}
			\includegraphics[width=1\textwidth]{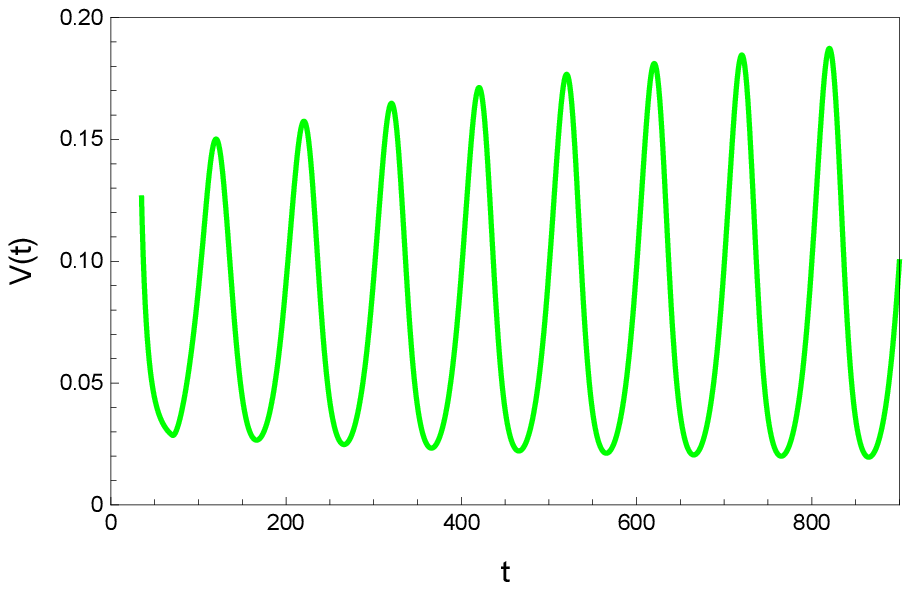}		
			\label{fig:Vvst_alpha0p5beta0p8tau35}
		\end{subfigure}
		\caption{The time series plots, phase plots and the ventilation plot with $\alpha = 0.5, \; \beta = 0.8 $ and $ \tau = 35.  $}
		\label{fig:sys1_alpha0p5_beta0p8_tau35}
		
	\end{figure}

	\begin{figure}[H]
		\centering
		\begin{subfigure}[b]{0.45\textwidth}
			\centering
			\includegraphics[width=1\textwidth]{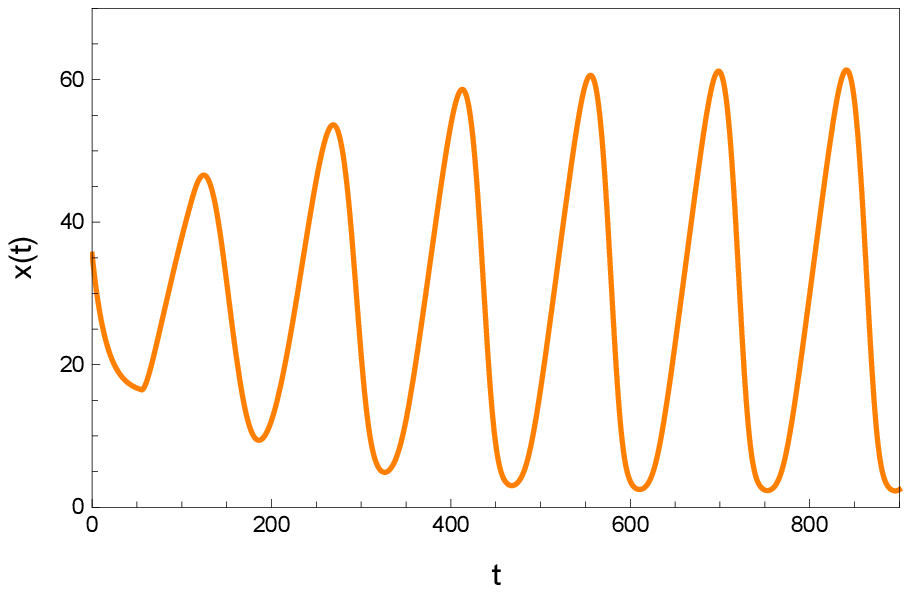}		
			\label{fig:xvst_alpha0p5beta0p8tau55}
		\end{subfigure}
		\hfill
		\begin{subfigure}[b]{0.45\textwidth}
			\centering
			\includegraphics[width=1\textwidth]{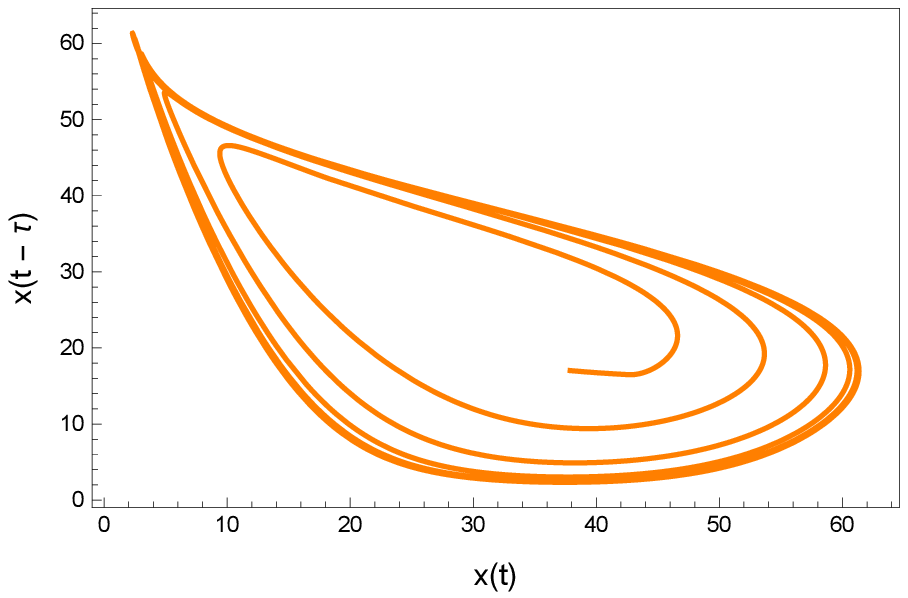}		
			\label{fig:xvsxtaut_alpha0p5beta0p8tau55}
		\end{subfigure}
		\hfill
		\begin{subfigure}[b]{0.45\textwidth}
			\centering
			\includegraphics[width=1\textwidth]{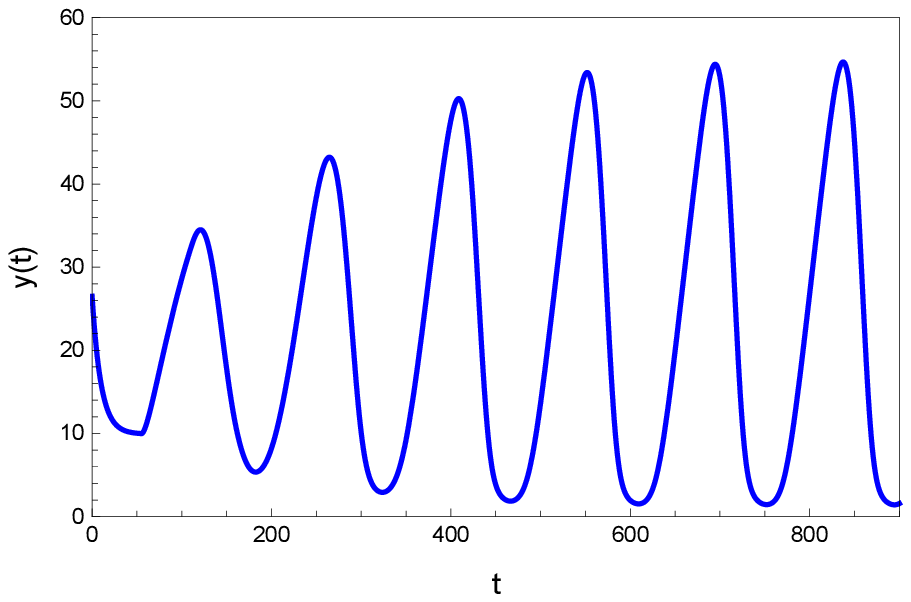}		
			\label{fig:yvst_alpha0p5beta0p8tau55}
		\end{subfigure}
		\hfill
		\begin{subfigure}[b]{0.45\textwidth}
			\centering
			\includegraphics[width=1\textwidth]{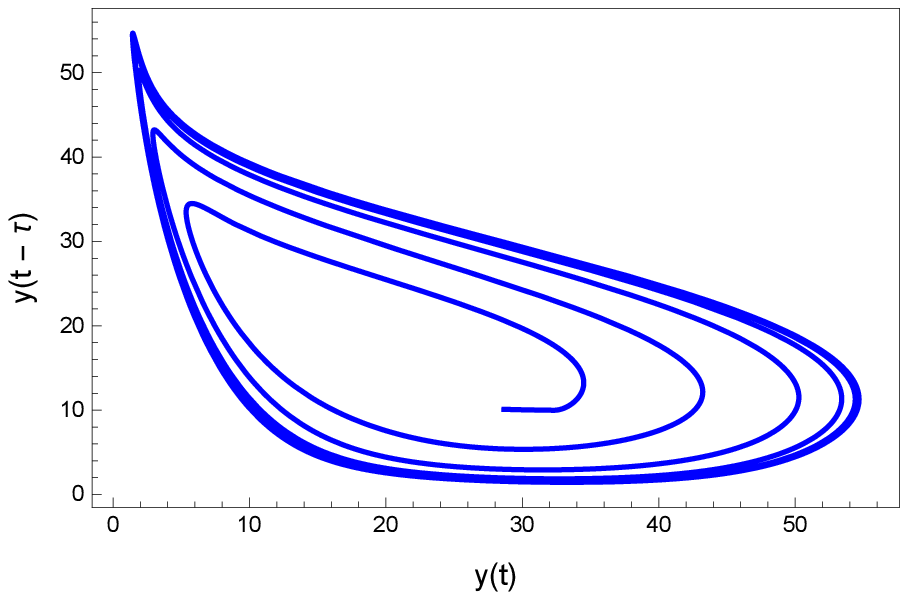}		
			\label{fig:yvsytau_alpha0p5beta0p8tau55}
		\end{subfigure}
		\hfill
		\begin{subfigure}[b]{0.45\textwidth}
			\centering
			\includegraphics[width=1\textwidth]{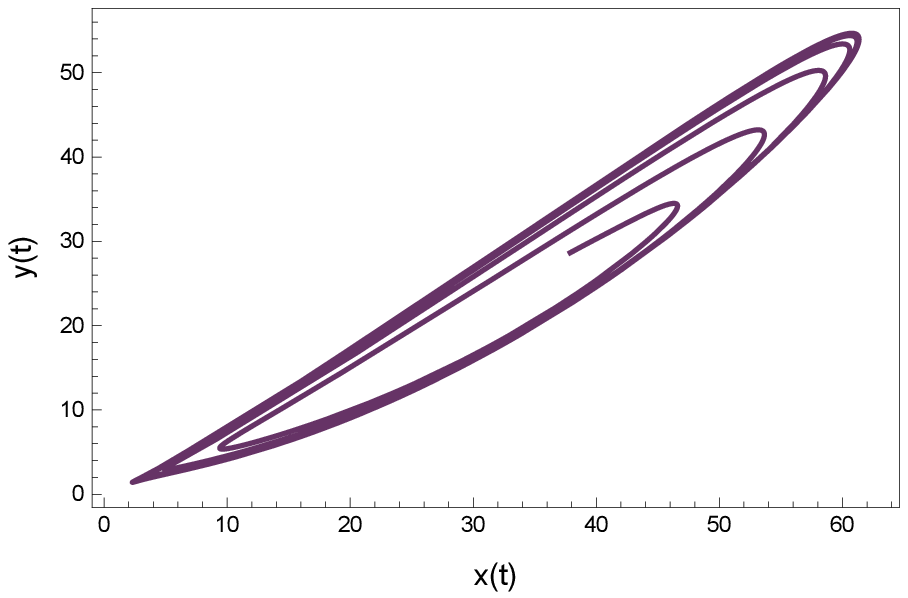}		
			\label{fig:xvsy_alpha0p5beta0p8tau55}
		\end{subfigure}
		\hfill
		\begin{subfigure}[b]{0.45\textwidth}
			\centering
			\includegraphics[width=1\textwidth]{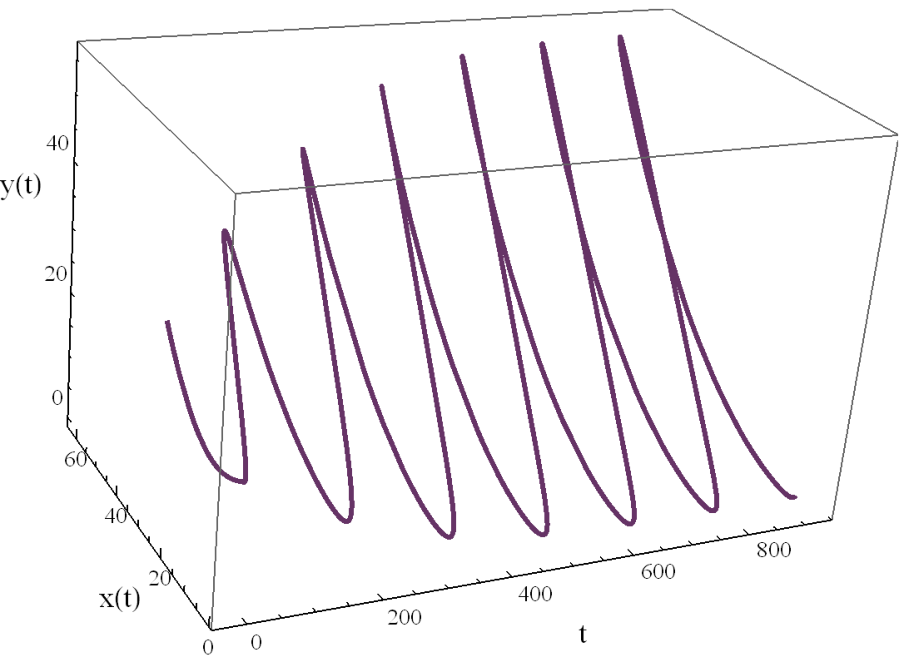}		
			\label{fig:xvsyvst_alpha0p5beta0p8tau55}
		\end{subfigure}
		\hfill
		\begin{subfigure}[b]{0.45\textwidth}
			\includegraphics[width=1\textwidth]{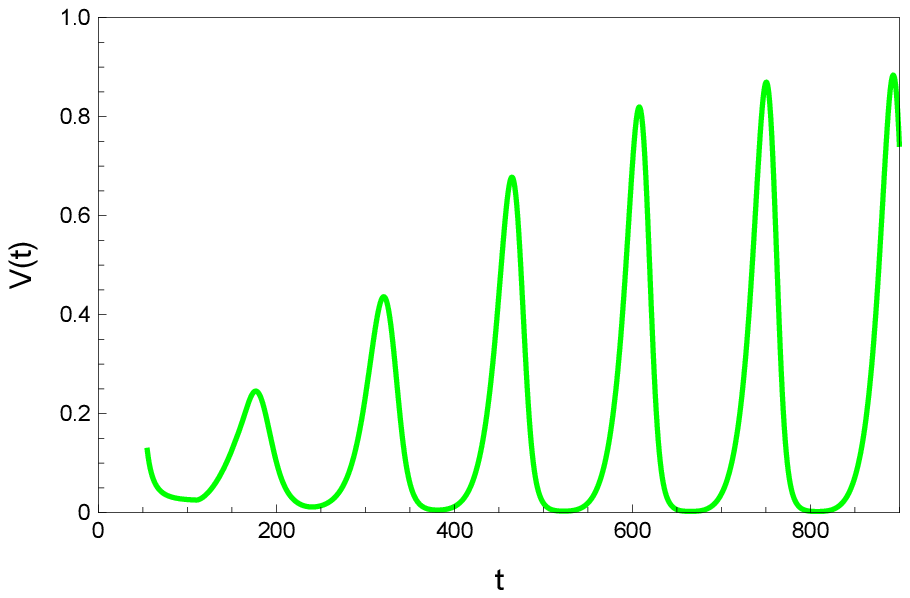}		
			\label{fig:Vvst_alpha0p5beta0p8tau55}
		\end{subfigure}
		\caption{The time series plots, phase plots and the ventilation plot with $\alpha = 0.5, \; \beta = 0.8 $ and $ \tau = 55.  $}
		\label{fig:sys1_alpha0p5_beta0p8_tau55}
		
	\end{figure}

	\subsection{The dynamic behavior with $\alpha = 0.6 \textrm{ and } \beta = 0.6$}

	Here we study the dynamic behavior of system (\ref{eq:sys1}) by changing the time delay $\tau$ with $\alpha = 0.6$ and $\beta = 0.6.$ Again the simulations have initial conditions of  $x(t) = 35.5$ and $y(t) = 26.5.$ This has a unique positive equilibrium at $(x_*,y_*) = (23.4108,   23.4108)$  as listed in \ref{table:eq_delay}.  We observe that the equilibrium is stable for $\tau < 24.9072$ and unstable for $\tau > 24.9072.$ At a Hopf bifurcation, no new equilibrium arise. A periodic solution emerges at the equilibrium point as $\tau$ passes through the bifurcation value. 
	
	\begin{figure}[H]
		\centering
		\begin{subfigure}[b]{0.45\textwidth}
			\centering
			\includegraphics[width=1\textwidth]{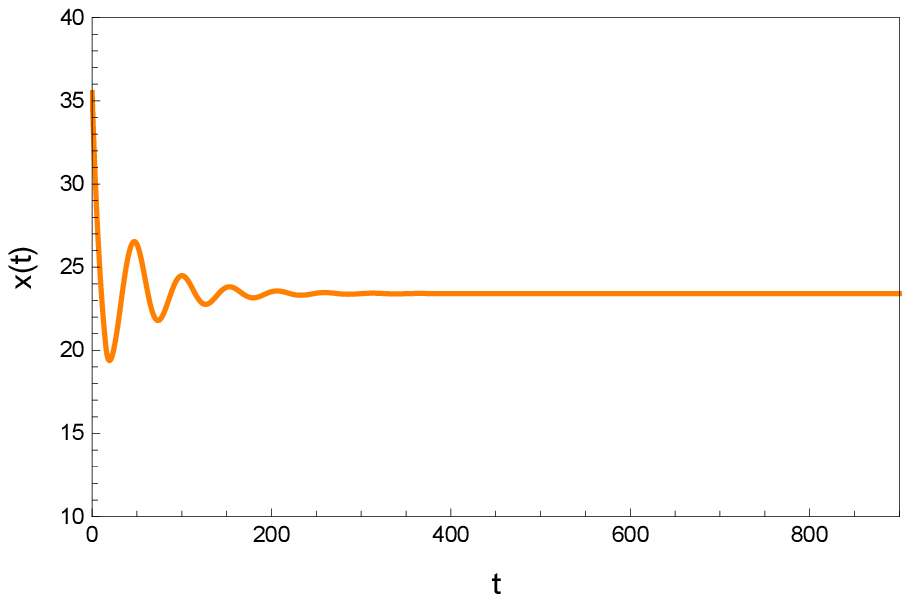}		
			\label{fig:xvst_alpha0p6beta0p6tau15}
		\end{subfigure}
		\hfill
		\begin{subfigure}[b]{0.45\textwidth}
			\centering
			\includegraphics[width=1\textwidth]{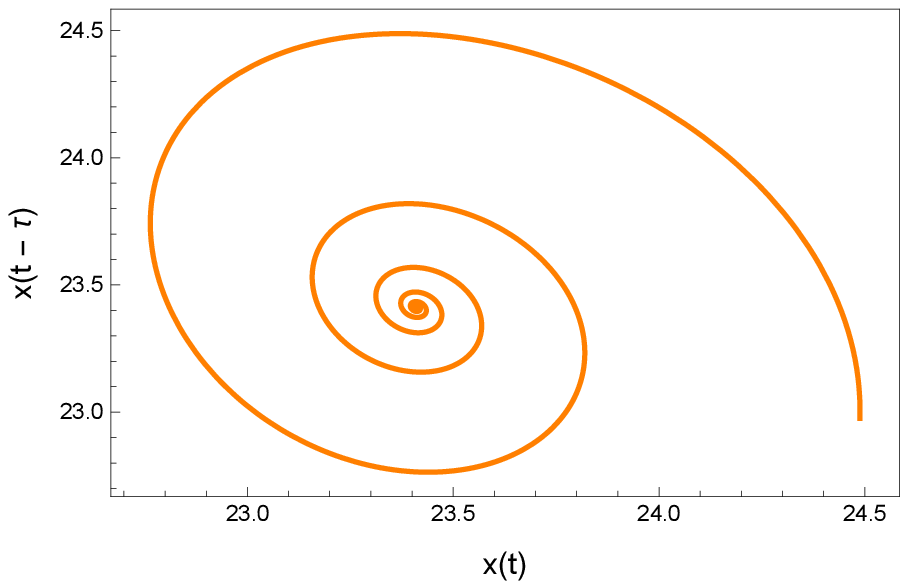}		
			\label{fig:xvsxtaut_alpha0p6beta0p6tau15}
		\end{subfigure}
		\hfill
		\begin{subfigure}[b]{0.45\textwidth}
			\centering
			\includegraphics[width=1\textwidth]{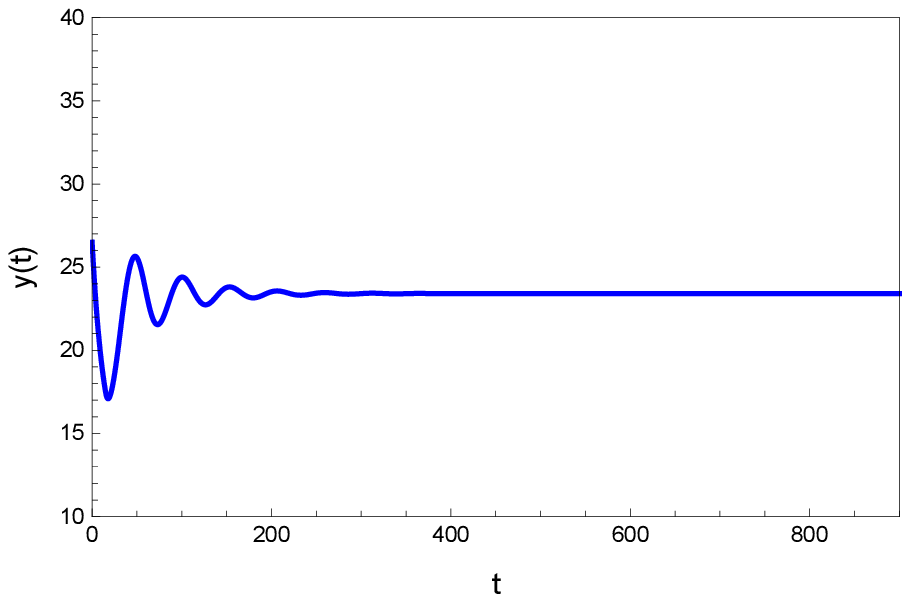}		
			\label{fig:yvst_alpha0p6beta0p6tau15}
		\end{subfigure}
		\hfill
		\begin{subfigure}[b]{0.45\textwidth}
			\centering
			\includegraphics[width=1\textwidth]{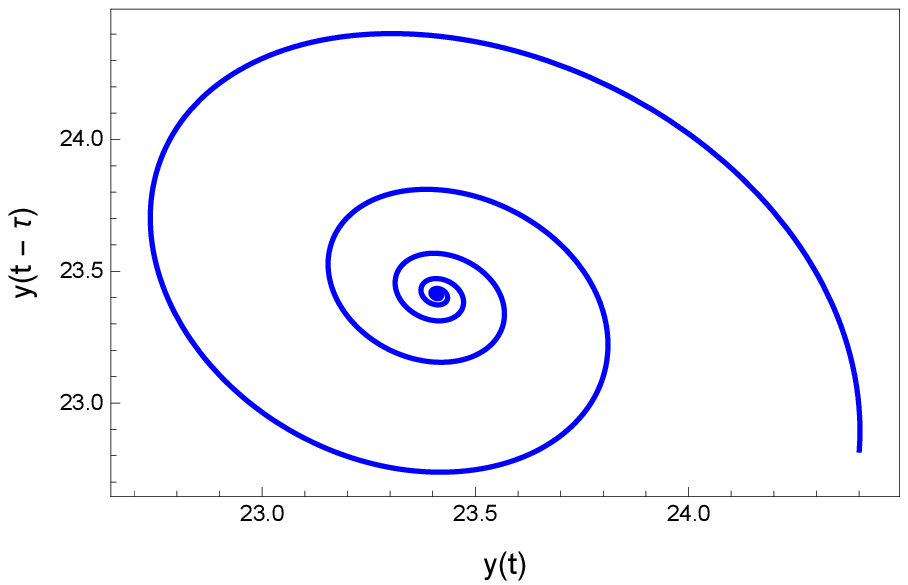}		
			\label{fig:yvsytau_alpha0p6beta0p6tau15}
		\end{subfigure}
		\hfill
		\begin{subfigure}[b]{0.45\textwidth}
			\centering
			\includegraphics[width=1\textwidth]{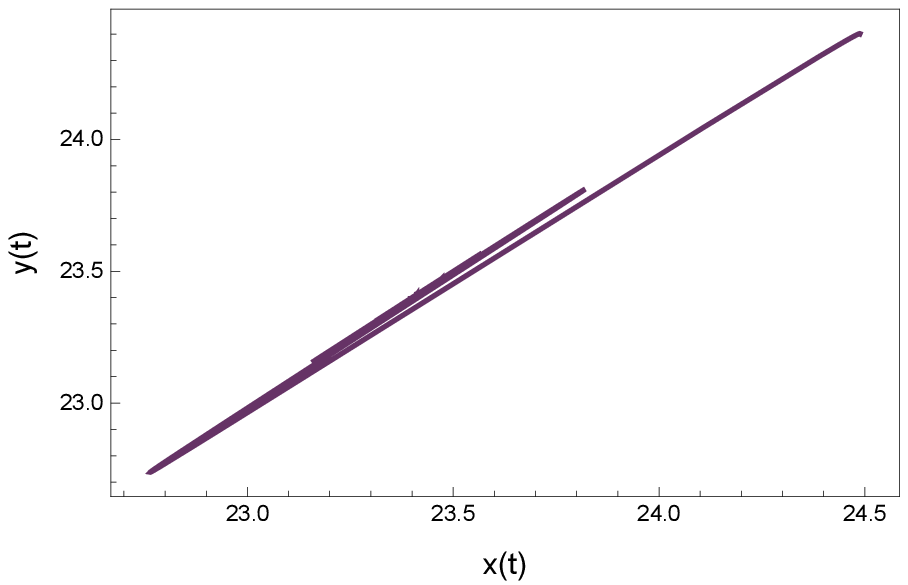}		
			\label{fig:xvsy_alpha0p6beta0p6tau15}
		\end{subfigure}
		\hfill
		\begin{subfigure}[b]{0.45\textwidth}
			\centering
			\includegraphics[width=1\textwidth]{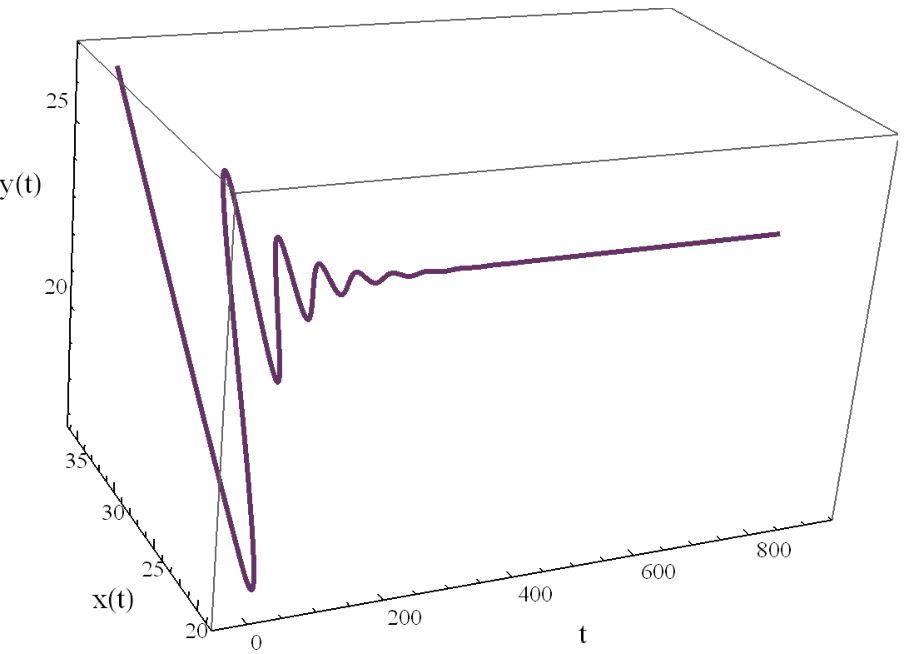}		
			\label{fig:xvsyvst_alpha0p6beta0p6tau15}
		\end{subfigure}
		\hfill
		\begin{subfigure}[b]{0.45\textwidth}
			\includegraphics[width=1\textwidth]{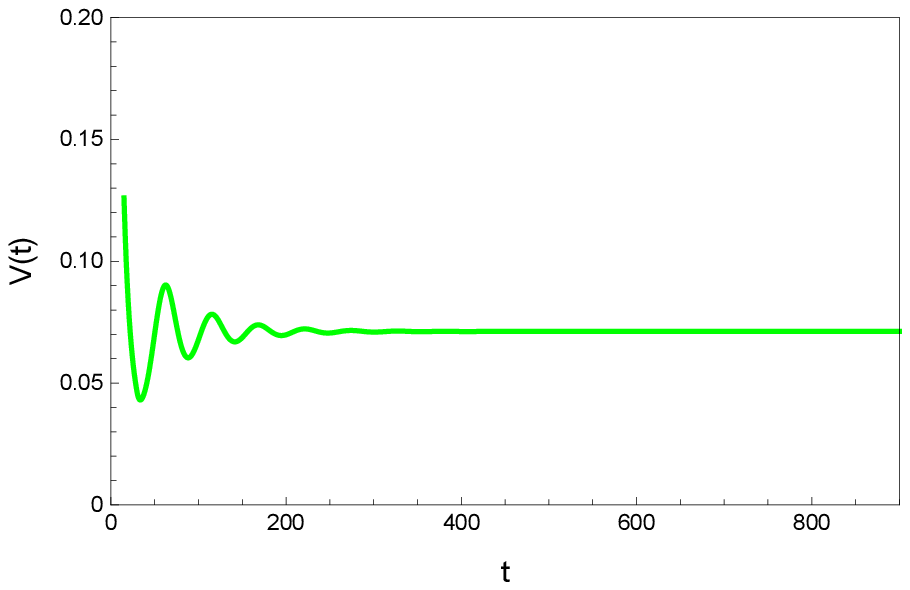}		
			\label{fig:Vvst_alpha0p6beta0p6tau15}
		\end{subfigure}
		\caption{The time series plots, phase plots and the ventilation plot with  $\alpha = 0.6, \; \beta = 0.6 $ and $ \tau = 15.$}
		\label{fig:sys1_alpha0p6_beta0p6_tau15}
		
	\end{figure}
	
	\begin{figure}[H]
		\centering
		\begin{subfigure}[b]{0.45\textwidth}
			\centering
			\includegraphics[width=1\textwidth]{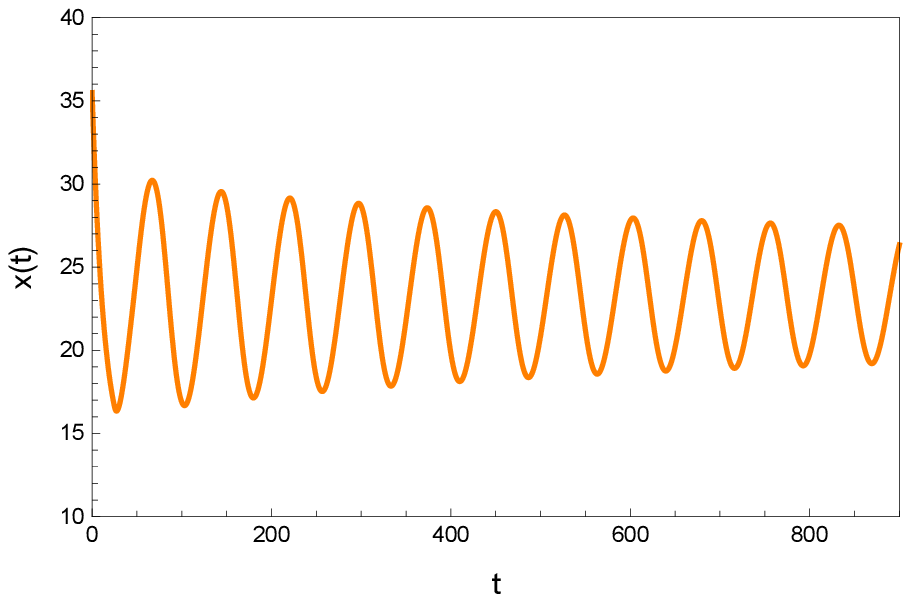}		
			\label{fig:xvst_alpha0p6beta0p6tau24p9}
		\end{subfigure}
		\hfill
		\begin{subfigure}[b]{0.45\textwidth}
			\centering
			\includegraphics[width=1\textwidth]{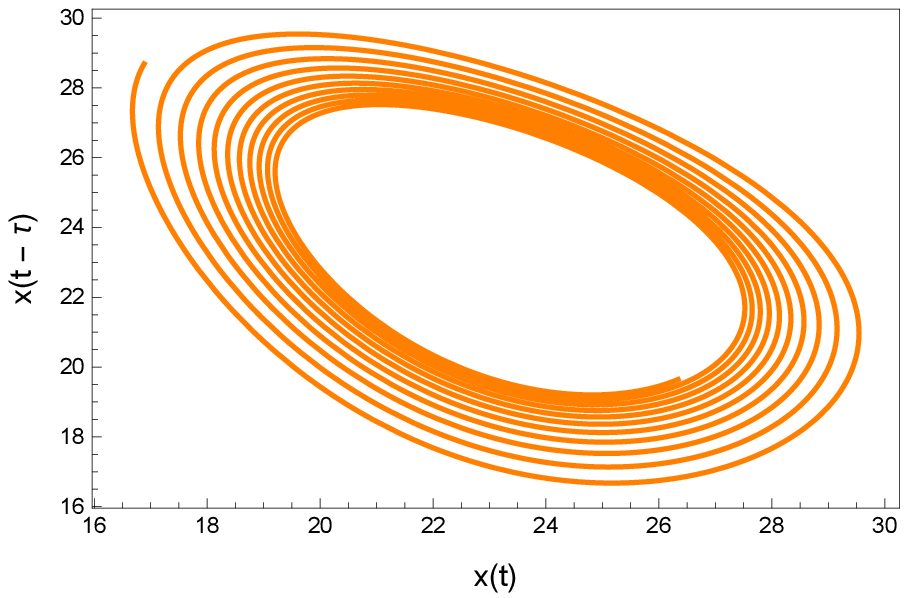}		
			\label{fig:xvsxtaut_alpha0p6beta0p6tau24p9}
		\end{subfigure}
		\hfill
		\begin{subfigure}[b]{0.45\textwidth}
			\centering
			\includegraphics[width=1\textwidth]{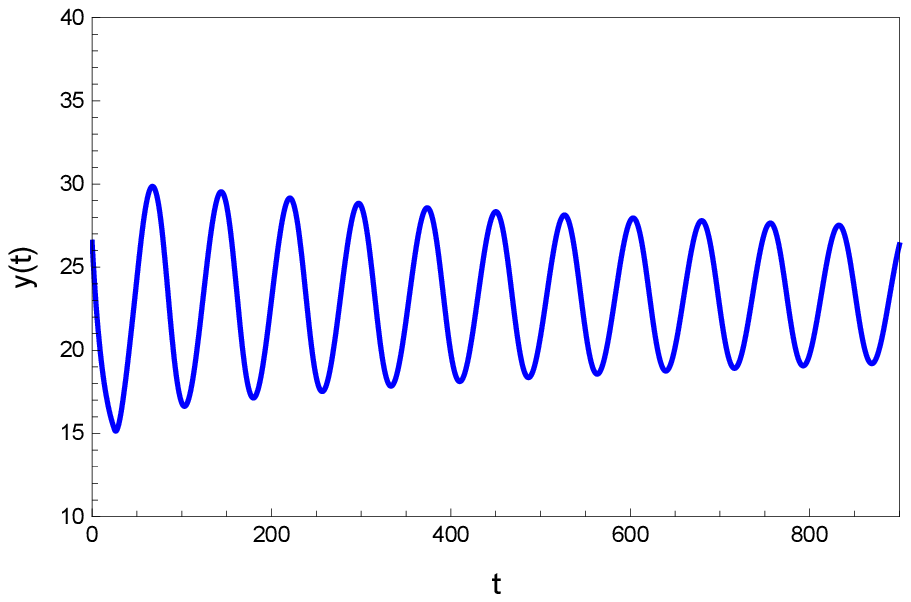}		
			\label{fig:yvst_alpha0p6beta0p6tau24p9}
		\end{subfigure}
		\hfill
		\begin{subfigure}[b]{0.45\textwidth}
			\centering
			\includegraphics[width=1\textwidth]{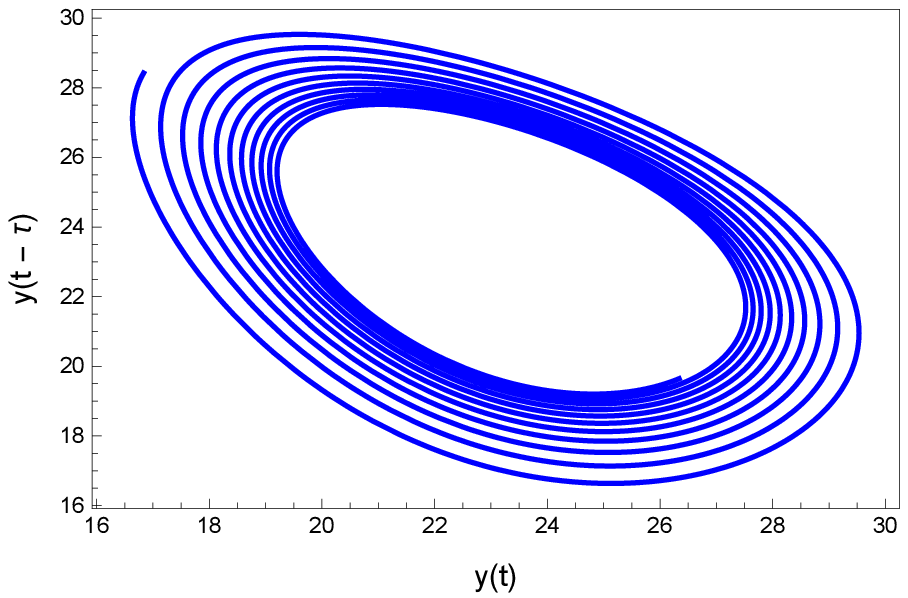}		
			\label{fig:yvsytau_alpha0p6beta0p6tau24p9}
		\end{subfigure}
		\hfill
		\begin{subfigure}[b]{0.45\textwidth}
			\centering
			\includegraphics[width=1\textwidth]{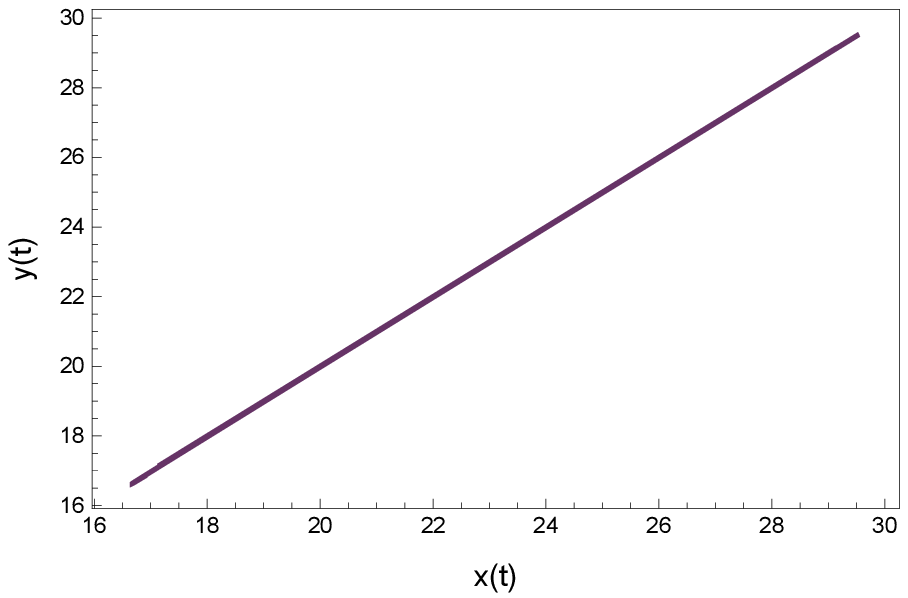}		
			\label{fig:xvsy_alpha0p6beta0p6tau24p9}
		\end{subfigure}
		\hfill
		\begin{subfigure}[b]{0.45\textwidth}
			\centering
			\includegraphics[width=1\textwidth]{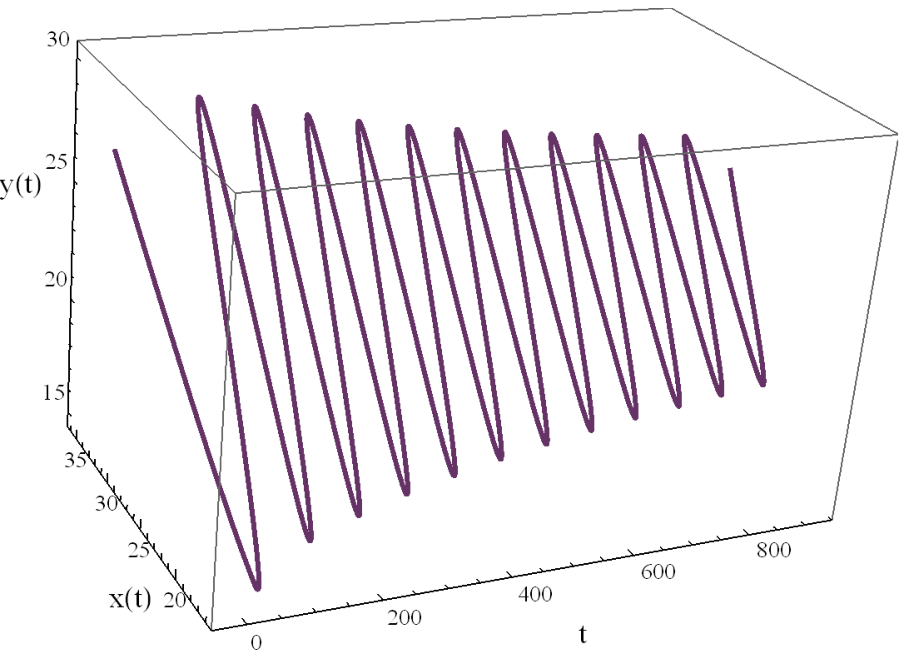}		
			\label{fig:xvsyvst_alpha0p6beta0p6tau24p9}
		\end{subfigure}
		\hfill
		\begin{subfigure}[b]{0.45\textwidth}
			\includegraphics[width=1\textwidth]{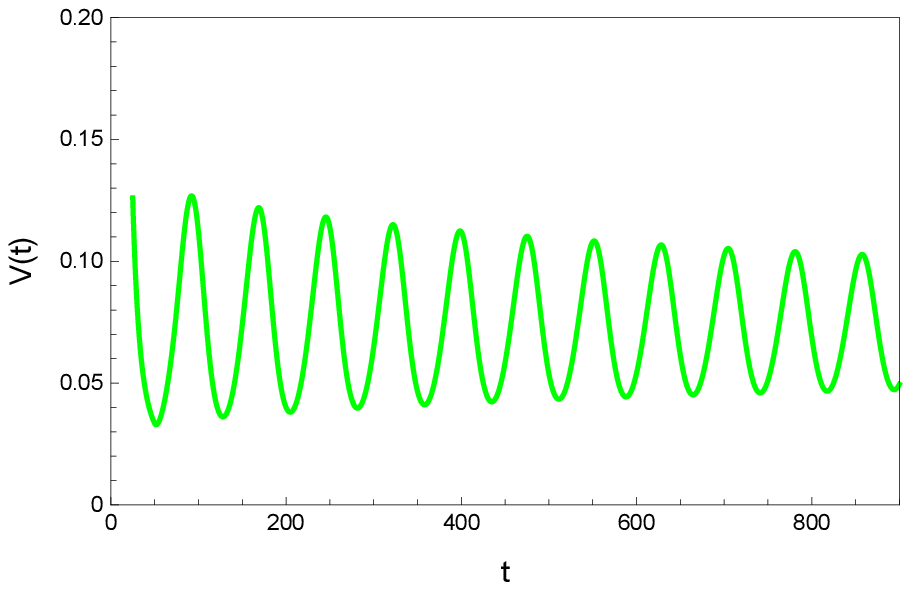}		
			\label{fig:Vvst_alpha0p6beta0p6tau24p9}
		\end{subfigure}
		\caption{The time series plots, phase plots and the ventilation plot with  $\alpha = 0.6, \; \beta = 0.6 $ and $ \tau = 24.9072.$}
		\label{fig:sys1_alpha0p6_beta0p6_tau24p9}
		
	\end{figure}

	\begin{figure}[H]
		\centering
		\begin{subfigure}[b]{0.45\textwidth}
			\centering
			\includegraphics[width=1\textwidth]{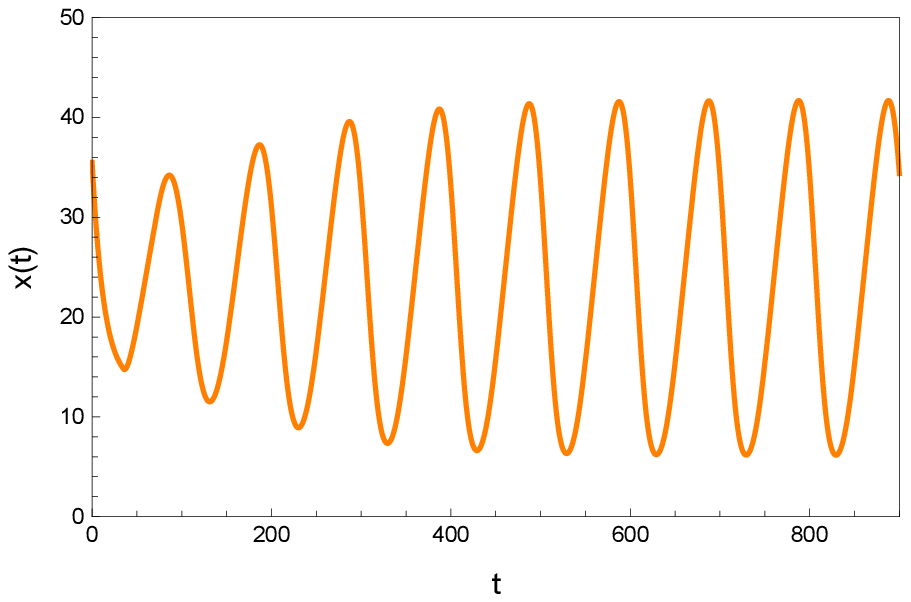}		
			\label{fig:xvst_alpha0p6beta0p6tau35}
		\end{subfigure}
		\hfill
		\begin{subfigure}[b]{0.45\textwidth}
			\centering
			\includegraphics[width=1\textwidth]{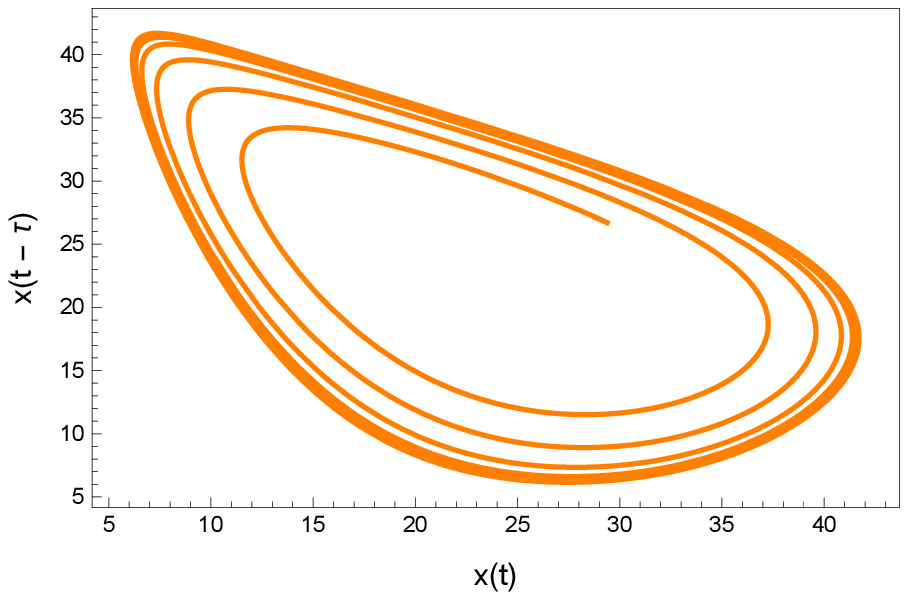}		
			\label{fig:xvsxtaut_alpha0p6beta0p6tau35}
		\end{subfigure}
		\hfill
		\begin{subfigure}[b]{0.45\textwidth}
			\centering
			\includegraphics[width=1\textwidth]{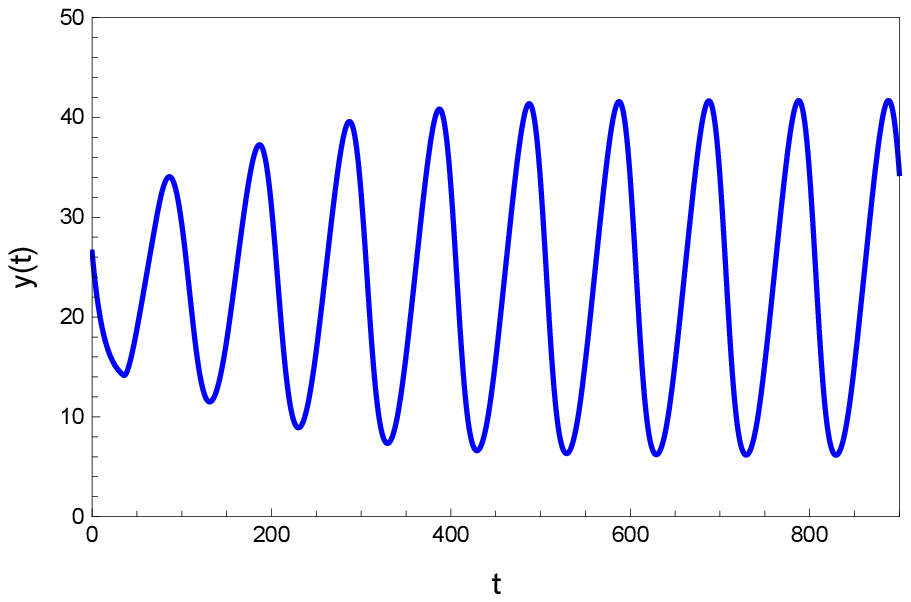}		
			\label{fig:yvst_alpha0p6beta0p6tau35}
		\end{subfigure}
		\hfill
		\begin{subfigure}[b]{0.45\textwidth}
			\centering
			\includegraphics[width=1\textwidth]{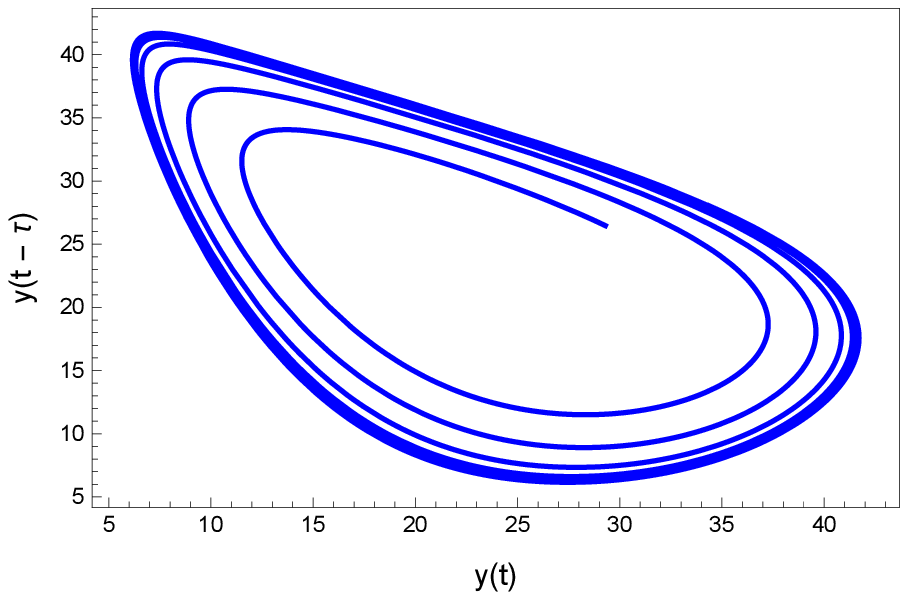}		
			\label{fig:yvsytau_alpha0p6beta0p6tau35}
		\end{subfigure}
		\hfill
		\begin{subfigure}[b]{0.45\textwidth}
			\centering
			\includegraphics[width=1\textwidth]{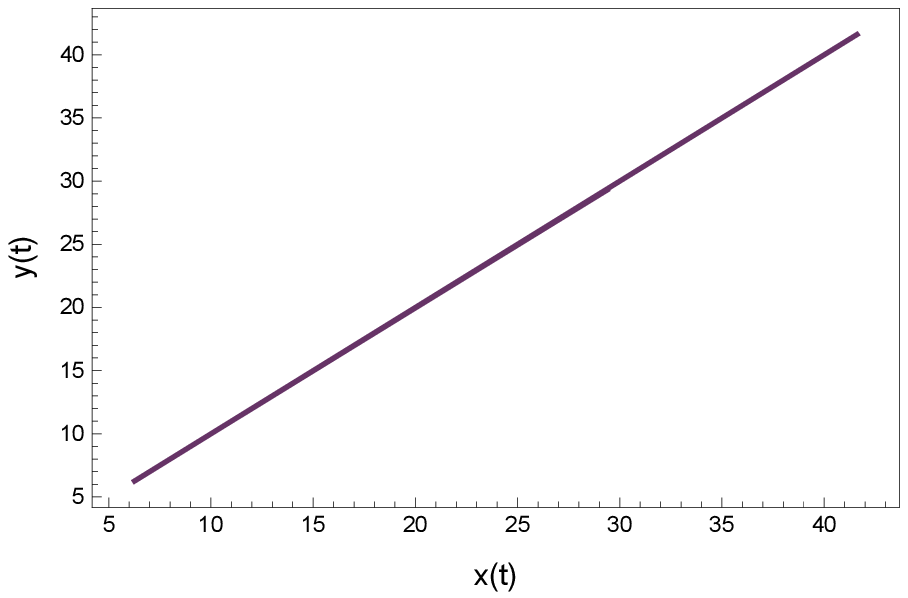}		
			\label{fig:xvsy_alpha0p6beta0p6tau35}
		\end{subfigure}
		\hfill
		\begin{subfigure}[b]{0.45\textwidth}
			\centering
			\includegraphics[width=1\textwidth]{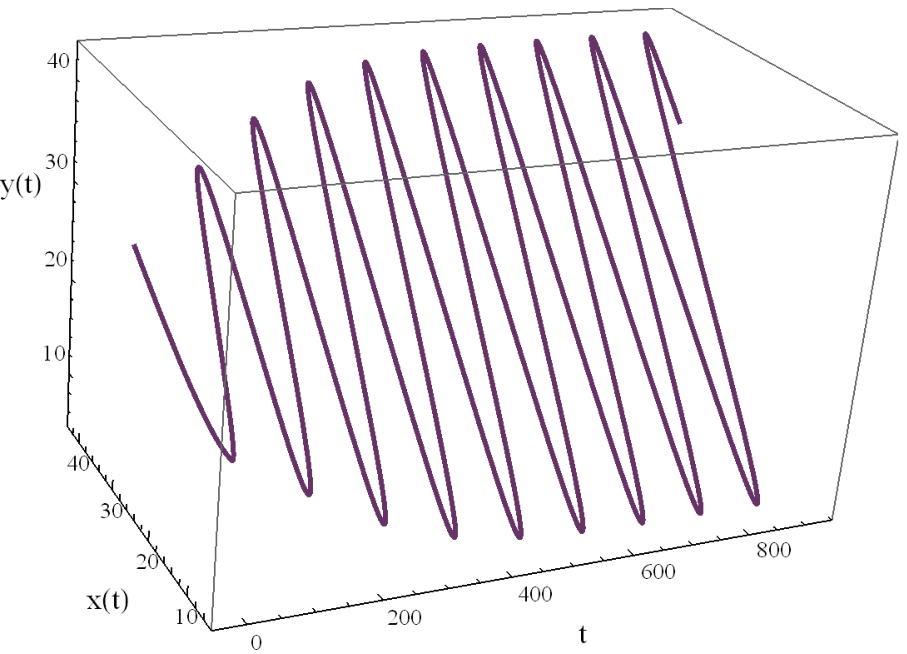}		
			\label{fig:xvsyvst_alpha0p6beta0p6tau35}
		\end{subfigure}
		\hfill
		\begin{subfigure}[b]{0.45\textwidth}
			\includegraphics[width=1\textwidth]{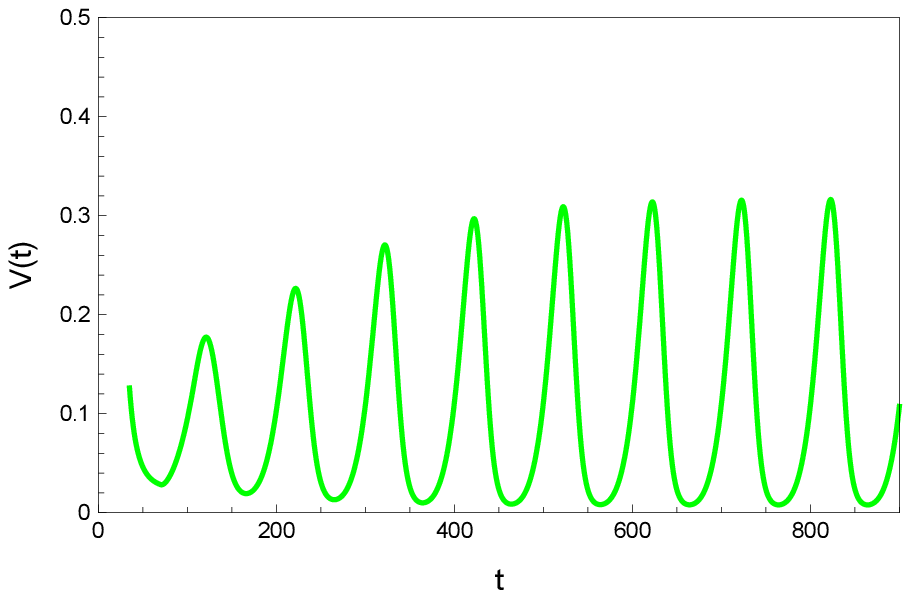}		
			\label{fig:Vvst_alpha0p6beta0p6tau35}
		\end{subfigure}
		\caption{The time series plots, phase plots and the ventilation plot with  $\alpha = 0.6, \; \beta = 0.6 $ and $ \tau = 35.$}
		\label{fig:sys1_alpha0p6_beta0p6_tau35}
		
	\end{figure}


	\section{Conclusion}

	We applied nonlinear delay differential equation in modeling the human respiratory system. The two state model which describes the  balance equation for carbon dioxide and oxygen was studied. This model has three parameters $\alpha,$ $\beta$ and $\tau.$ The parameters $\alpha$ and $\beta$ affect the unique positive equilibrium $(x_*, y_*)$ of the model (see Figures \ref{fig:equi_xyvsalpha_beta0p8} and \ref{fig:equi_xyvsbeta_alpha0p5}) and  the time delay $\tau$ affects the stability of the system (see Figures \ref{fig:cc_beta0p8}, \ref{fig:cc_alpha0p5}, \ref{fig:ccs_tauvsalphavsbeta} and \ref{fig:maxroots_alpha0p5_beta0p8}.) The critical curves (Equation \ref{eq:taustar}) were used in studying the stability of our model. The three dimensional stability chart is constructed as shown in Figure \ref{fig:sr_tauvsalphavsbeta}. There is a region enclosed by $\tau = 0$ and the curve $\tau = \tau_1(0)$ in the $(\tau, \alpha)$ and $(\tau, \beta)$ plane where the equilibrium $(x_*,y_*)$ is stable. We have derived analytical expression for equilibrium point and critical delay as a function of $\alpha$ and $\beta$ and we list some of these numerical values in Table \ref{table:eq_delay}. These values are also verified by plotting bifurcation diagrams. By picking the delay $\tau$ as the bifurcation parameter, the stability of the positive equilibrium $E_*(x_*,y_*)$ and the existence of Hopf bifurcation are derived. The equilibrium is asymptotically stable for $0 \leq \tau \leq \tau_*$, unstable for $\tau > \tau_*.$ The system shows a supercritical Hopf bifurcation giving rise to  stable periodic oscillations. These periodic oscillations may be related to the medical condition we refer as periodic breathing.    It is to be noted that the delay parameter has effect on the stability but not on the equilibrium state. Additionally, 	the explicit derivation of the direction of  Hopf bifurcation and the stability of the bifurcation periodic solutions are determined with the help of normal form theory and center manifold theorem to delay differential equations.
	
	Finally, some numerical example and simulations are carried out to confirm the analytical findings. The numerical simulations verify the theoretical results.

	\bibliography{references1}

\begin{thebibliography}{}

\bibitem[\protect\citeauthoryear{Altevogt, Colten, et~al.}{Altevogt
  et~al.}{2006}]{altevogt2006sleep}
Altevogt, B.~M., H.~R. Colten, et~al. (2006).
\newblock Sleep disorders and sleep deprivation: an unmet public health
  problem.

\bibitem[\protect\citeauthoryear{Bairagi and Jana}{Bairagi and
  Jana}{2011}]{bairagi2011stability}
Bairagi, N. and D.~Jana (2011).
\newblock On the stability and hopf bifurcation of a delay-induced
  predator--prey system with habitat complexity.
\newblock {\em Applied Mathematical Modelling\/}~{\em 35\/}(7), 3255--3267.

\bibitem[\protect\citeauthoryear{Batzel, Kappel, Schneditz, and Tran}{Batzel
  et~al.}{2007}]{batzel2007cardiovascular}
Batzel, J.~J., F.~Kappel, D.~Schneditz, and H.~T. Tran (2007).
\newblock {\em Cardiovascular and respiratory systems: modeling, analysis, and
  control}.
\newblock SIAM.

\bibitem[\protect\citeauthoryear{Batzel and Tran}{Batzel and
  Tran}{2000}]{batzel2000stability}
Batzel, J.~J. and H.~T. Tran (2000).
\newblock Stability of the human respiratory control system i. analysis of a
  two-dimensional delay state-space model.
\newblock {\em Journal of mathematical biology\/}~{\em 41\/}(1), 45--79.

\bibitem[\protect\citeauthoryear{B{\'e}lair and Campbell}{B{\'e}lair and
  Campbell}{1994}]{belair1994stability}
B{\'e}lair, J. and S.~A. Campbell (1994).
\newblock Stability and bifurcations of equilibria in a multiple-delayed
  differential equation.
\newblock {\em SIAM Journal on Applied Mathematics\/}~{\em 54\/}(5),
  1402--1424.

\bibitem[\protect\citeauthoryear{Berry, Budhiraja, Gottlieb, Gozal, Iber,
  Kapur, Marcus, Mehra, Parthasarathy, Quan, et~al.}{Berry
  et~al.}{2012}]{berry2012rules}
Berry, R.~B., R.~Budhiraja, D.~J. Gottlieb, D.~Gozal, C.~Iber, V.~K. Kapur,
  C.~L. Marcus, R.~Mehra, S.~Parthasarathy, S.~F. Quan, et~al. (2012).
\newblock Rules for scoring respiratory events in sleep: update of the 2007
  aasm manual for the scoring of sleep and associated events: deliberations of
  the sleep apnea definitions task force of the american academy of sleep
  medicine.
\newblock {\em Journal of clinical sleep medicine\/}~{\em 8\/}(5), 597--619.

\bibitem[\protect\citeauthoryear{Bhalekar}{Bhalekar}{2016}]{bhalekar2016stability}
Bhalekar, S. (2016).
\newblock Stability analysis of u{\c{c}}ar prototype delayed system.
\newblock {\em Signal, Image and Video Processing\/}~{\em 10\/}(4), 777--781.

\bibitem[\protect\citeauthoryear{Bi and Ruan}{Bi and
  Ruan}{2013}]{bi2013bifurcations}
Bi, P. and S.~Ruan (2013).
\newblock Bifurcations in delay differential equations and applications to
  tumor and immune system interaction models.
\newblock {\em SIAM Journal on Applied Dynamical Systems\/}~{\em 12\/}(4),
  1847--1888.

\bibitem[\protect\citeauthoryear{B{\i}lazero{\u{g}}lu, Merdan, and
  Guerrini}{B{\i}lazero{\u{g}}lu et~al.}{2022}]{bilazerouglu2022hopf}
B{\i}lazero{\u{g}}lu, {\c{S}}., H.~Merdan, and L.~Guerrini (2022).
\newblock Hopf bifurcations of a lengyel-epstein model involving two discrete
  time delays.
\newblock {\em Discrete \& Continuous Dynamical Systems-S\/}~{\em 15\/}(3),
  535.

\bibitem[\protect\citeauthoryear{{\c{C}}elik}{{\c{C}}elik}{2008}]{ccelik2008stability}
{\c{C}}elik, C. (2008).
\newblock The stability and hopf bifurcation for a predator--prey system with
  time delay.
\newblock {\em Chaos, Solitons \& Fractals\/}~{\em 37\/}(1), 87--99.

\bibitem[\protect\citeauthoryear{Celik}{Celik}{2015}]{celik2015stability}
Celik, C. (2015).
\newblock Stability and hopf bifurcation in a delayed ratio dependent
  holling--tanner type model.
\newblock {\em Applied Mathematics and Computation\/}~{\em 255}, 228--237.

\bibitem[\protect\citeauthoryear{Cooke and Turi}{Cooke and
  Turi}{1994}]{cooke1994stability}
Cooke, K.~L. and J.~Turi (1994).
\newblock Stability, instability in delay equations modeling human respiration.
\newblock {\em Journal of Mathematical Biology\/}~{\em 32\/}(6), 535--543.

\bibitem[\protect\citeauthoryear{Dell’Anna}{Dell’Anna}{2020}]{dell2020solvable}
Dell’Anna, L. (2020).
\newblock Solvable delay model for epidemic spreading: the case of covid-19 in
  italy.
\newblock {\em Scientific Reports\/}~{\em 10\/}(1), 1--10.

\bibitem[\protect\citeauthoryear{Du and Wang}{Du and Wang}{2010}]{du2010hopf}
Du, L. and M.~Wang (2010).
\newblock Hopf bifurcation analysis in the 1-d lengyel--epstein
  reaction--diffusion model.
\newblock {\em Journal of mathematical analysis and applications\/}~{\em
  366\/}(2), 473--485.

\bibitem[\protect\citeauthoryear{Eckert, Jordan, Merchia, and Malhotra}{Eckert
  et~al.}{2007}]{eckert2007central}
Eckert, D.~J., A.~S. Jordan, P.~Merchia, and A.~Malhotra (2007).
\newblock Central sleep apnea: pathophysiology and treatment.
\newblock {\em Chest\/}~{\em 131\/}(2), 595--607.

\bibitem[\protect\citeauthoryear{Epstein and Pojman}{Epstein and
  Pojman}{1998}]{epstein1998introduction}
Epstein, I.~R. and J.~A. Pojman (1998).
\newblock {\em An introduction to nonlinear chemical dynamics: oscillations,
  waves, patterns, and chaos}.
\newblock Oxford university press.

\bibitem[\protect\citeauthoryear{Ghosh, Das, and Das}{Ghosh
  et~al.}{2021}]{ghosh2021dynamics}
Ghosh, M., S.~Das, and P.~Das (2021).
\newblock Dynamics and control of delayed rumor propagation through social
  networks.
\newblock {\em Journal of Applied Mathematics and Computing\/}, 1--30.

\bibitem[\protect\citeauthoryear{Gilsinn}{Gilsinn}{2002}]{gilsinn2002estimating}
Gilsinn, D.~E. (2002).
\newblock Estimating critical hopf bifurcation parameters for a second-order
  delay differential equation with application to machine tool chatter.
\newblock {\em Nonlinear Dynamics\/}~{\em 30\/}(2), 103--154.

\bibitem[\protect\citeauthoryear{Guglielmi, Iacomini, and Viguerie}{Guglielmi
  et~al.}{2022}]{guglielmi2022delay}
Guglielmi, N., E.~Iacomini, and A.~Viguerie (2022).
\newblock Delay differential equations for the spatially resolved simulation of
  epidemics with specific application to covid-19.
\newblock {\em Mathematical Methods in the Applied Sciences\/}.

\bibitem[\protect\citeauthoryear{Hassard, Hassard, Kazarinoff, Wan, Wan,
  et~al.}{Hassard et~al.}{1981}]{hassard1981theory}
Hassard, B.~D., B.~Hassard, N.~D. Kazarinoff, Y.-H. Wan, Y.~W. Wan, et~al.
  (1981).
\newblock {\em Theory and applications of Hopf bifurcation}, Volume~41.
\newblock CUP Archive.

\bibitem[\protect\citeauthoryear{Javaheri, Parker, Liming, Corbett, Nishiyama,
  Wexler, and Roselle}{Javaheri et~al.}{1998}]{javaheri1998sleep}
Javaheri, S., T.~Parker, J.~Liming, W.~Corbett, H.~Nishiyama, L.~Wexler, and
  G.~Roselle (1998).
\newblock Sleep apnea in 81 ambulatory male patients with stable heart failure:
  types and their prevalences, consequences, and presentations.
\newblock {\em Circulation\/}~{\em 97\/}(21), 2154--2159.

\bibitem[\protect\citeauthoryear{Khellaf and Hamri}{Khellaf and
  Hamri}{2010}]{khellaf2010boundedness}
Khellaf, W. and N.~Hamri (2010).
\newblock Boundedness and global stability for a predator-prey system with the
  beddington-deangelis functional response.
\newblock {\em Differential Equations and Nonlinear Mechanics\/}~{\em 2010}.

\bibitem[\protect\citeauthoryear{Khoo, Kronauer, Strohl, and Slutsky}{Khoo
  et~al.}{1982}]{khoo1982factors}
Khoo, M., R.~E. Kronauer, K.~P. Strohl, and A.~S. Slutsky (1982).
\newblock Factors inducing periodic breathing in humans: a general model.
\newblock {\em Journal of applied physiology\/}~{\em 53\/}(3), 644--659.

\bibitem[\protect\citeauthoryear{Kollar and Turi}{Kollar and
  Turi}{2005}]{kollar2005numerical}
Kollar, L.~E. and J.~Turi (2005).
\newblock \href{http://eprints.hud.ac.uk/id/eprint/16652}{Numerical stability
  analysis in respiratory control system models}.
\newblock In {\em Electronic Journal of Differential Equations: Proceedings of
  2004 Conference on Differential Equations and Applications in Mathematical
  Biology}, Volume~12, pp.\  65--78. Texas State University.

\bibitem[\protect\citeauthoryear{Kumar, Sharma, and Agnihotri}{Kumar
  et~al.}{2020}]{kumar2020hopf}
Kumar, R., A.~K. Sharma, and K.~Agnihotri (2020).
\newblock \href{https://doi.org/10.1002/mma.6032}{Hopf bifurcation analysis in
  a multiple delayed innovation diffusion model with Holling II functional
  response}.
\newblock {\em Mathematical Methods in the Applied Sciences\/}~{\em 43\/}(4),
  2056--2075.

\bibitem[\protect\citeauthoryear{Lakshmanan and Senthilkumar}{Lakshmanan and
  Senthilkumar}{2011}]{lakshmanan2011dynamics}
Lakshmanan, M. and D.~V. Senthilkumar (2011).
\newblock {\em Dynamics of nonlinear time-delay systems}.
\newblock Springer Science \& Business Media.

\bibitem[\protect\citeauthoryear{Lanfranchi, Braghiroli, Bosimini, Mazzuero,
  Colombo, Donner, and Giannuzzi}{Lanfranchi
  et~al.}{1999}]{lanfranchi1999prognostic}
Lanfranchi, P.~A., A.~Braghiroli, E.~Bosimini, G.~Mazzuero, R.~Colombo, C.~F.
  Donner, and P.~Giannuzzi (1999).
\newblock Prognostic value of nocturnal cheyne-stokes respiration in chronic
  heart failure.
\newblock {\em Circulation\/}~{\em 99\/}(11), 1435--1440.

\bibitem[\protect\citeauthoryear{Leung and Douglas~Bradley}{Leung and
  Douglas~Bradley}{2001}]{leung2001sleep}
Leung, R.~S. and T.~Douglas~Bradley (2001).
\newblock Sleep apnea and cardiovascular disease.
\newblock {\em American journal of respiratory and critical care
  medicine\/}~{\em 164\/}(12), 2147--2165.

\bibitem[\protect\citeauthoryear{Li, Liao, and Yu}{Li
  et~al.}{2004}]{li2004hopf}
Li, C., X.~Liao, and J.~Yu (2004).
\newblock Hopf bifurcation in a prototype delayed system.
\newblock {\em Chaos, Solitons \& Fractals\/}~{\em 19\/}(4), 779--787.

\bibitem[\protect\citeauthoryear{Li and Zhang}{Li and
  Zhang}{2021}]{li2021dynamic}
Li, L. and Y.~Zhang (2021).
\newblock \href{https://doi.org/10.1155/2021/5554562}{Dynamic Analysis and Hopf
  Bifurcation of a Lengyel--Epstein System with Two Delays}.
\newblock {\em Journal of Mathematics\/}~{\em 2021}.

\bibitem[\protect\citeauthoryear{Li, Wang, and Zhou}{Li
  et~al.}{2019}]{li2019bifurcation}
Li, T., Y.~Wang, and X.~Zhou (2019).
\newblock Bifurcation analysis of a first time-delay chaotic system.
\newblock {\em Advances in Difference Equations\/}~{\em 2019\/}(1), 1--18.

\bibitem[\protect\citeauthoryear{Li, Ruan, and Wei}{Li
  et~al.}{1999}]{li1999stability}
Li, X., S.~Ruan, and J.~Wei (1999).
\newblock Stability and bifurcation in delay--differential equations with two
  delays.
\newblock {\em Journal of Mathematical Analysis and Applications\/}~{\em
  236\/}(2), 254--280.

\bibitem[\protect\citeauthoryear{Liu, Magal, Seydi, and Webb}{Liu
  et~al.}{2020}]{liu2020covid}
Liu, Z., P.~Magal, O.~Seydi, and G.~Webb (2020).
\newblock A covid-19 epidemic model with latency period.
\newblock {\em Infectious Disease Modelling\/}~{\em 5}, 323--337.

\bibitem[\protect\citeauthoryear{Macke and Glass}{Macke and
  Glass}{1977}]{macke1977oscillation}
Macke, M. and L.~Glass (1977).
\newblock Oscillation and chaos in physiological control system.
\newblock {\em Science\/}~{\em 197}, 287--289.

\bibitem[\protect\citeauthoryear{Men{\'e}ndez}{Men{\'e}ndez}{2020}]{menendez2020elementary}
Men{\'e}ndez, J. (2020).
\newblock Elementary time-delay dynamics of covid-19 disease.
\newblock {\em medRxiv\/}.

\bibitem[\protect\citeauthoryear{Murray}{Murray}{2002}]{murray2002mathematical}
Murray, J.~D. (2002).
\newblock {\em Mathematical biology: I. An introduction}.
\newblock Springer.

\bibitem[\protect\citeauthoryear{Paul and Lorin}{Paul and
  Lorin}{2021}]{paul2021distribution}
Paul, S. and E.~Lorin (2021).
\newblock Distribution of incubation periods of covid-19 in the canadian
  context.
\newblock {\em Scientific Reports\/}~{\em 11\/}(1), 1--9.

\bibitem[\protect\citeauthoryear{Rihan and Alsakaji}{Rihan and
  Alsakaji}{2021}]{rihan2021dynamics}
Rihan, F. and H.~Alsakaji (2021).
\newblock Dynamics of a stochastic delay differential model for covid-19
  infection with asymptomatic infected and interacting people: Case study in
  the uae.
\newblock {\em Results in Physics\/}~{\em 28}, 104658.

\bibitem[\protect\citeauthoryear{Roose and Szalai}{Roose and
  Szalai}{2007}]{roose2007continuation}
Roose, D. and R.~Szalai (2007).
\newblock Continuation and bifurcation analysis of delay differential
  equations.
\newblock In {\em Numerical continuation methods for dynamical systems}, pp.\
  359--399. Springer.

\bibitem[\protect\citeauthoryear{Shayak, Sharma, Rand, Singh, and Misra}{Shayak
  et~al.}{2020}]{shayak2020delay}
Shayak, B., M.~M. Sharma, R.~H. Rand, A.~Singh, and A.~Misra (2020).
\newblock A delay differential equation model for the spread of covid-19.
\newblock {\em International Journal of Engineering Research and
  Applications\/}~{\em 10\/}(10/3), 1--13.

\bibitem[\protect\citeauthoryear{Song, Han, and Peng}{Song
  et~al.}{2004}]{song2004stability}
Song, Y., M.~Han, and Y.~Peng (2004).
\newblock Stability and hopf bifurcations in a competitive lotka--volterra
  system with two delays.
\newblock {\em Chaos, Solitons \& Fractals\/}~{\em 22\/}(5), 1139--1148.

\bibitem[\protect\citeauthoryear{Song and Wei}{Song and
  Wei}{2005}]{song2005local}
Song, Y. and J.~Wei (2005).
\newblock Local hopf bifurcation and global periodic solutions in a delayed
  predator--prey system.
\newblock {\em Journal of Mathematical Analysis and Applications\/}~{\em
  301\/}(1), 1--21.

\bibitem[\protect\citeauthoryear{Su, Wei, and Shi}{Su
  et~al.}{2009}]{su2009hopf}
Su, Y., J.~Wei, and J.~Shi (2009).
\newblock Hopf bifurcations in a reaction--diffusion population model with
  delay effect.
\newblock {\em Journal of Differential Equations\/}~{\em 247\/}(4), 1156--1184.

\bibitem[\protect\citeauthoryear{Sun, Han, and Lin}{Sun
  et~al.}{2007}]{sun2007analysis}
Sun, C., M.~Han, and Y.~Lin (2007).
\newblock Analysis of stability and hopf bifurcation for a delayed logistic
  equation.
\newblock {\em Chaos, Solitons \& Fractals\/}~{\em 31\/}(3), 672--682.

\bibitem[\protect\citeauthoryear{Tang and Zhou}{Tang and
  Zhou}{2007}]{tang2007stability}
Tang, Y. and L.~Zhou (2007).
\newblock Stability switch and hopf bifurcation for a diffusive prey--predator
  system with delay.
\newblock {\em Journal of Mathematical Analysis and Applications\/}~{\em
  334\/}(2), 1290--1307.

\bibitem[\protect\citeauthoryear{U{\c{c}}ar}{U{\c{c}}ar}{2002}]{uccar2002prototype}
U{\c{c}}ar, A. (2002).
\newblock A prototype model for chaos studies.
\newblock {\em International journal of engineering science\/}~{\em 40\/}(3),
  251--258.

\bibitem[\protect\citeauthoryear{Wang, Liu, and Xu}{Wang
  et~al.}{2012}]{wang2012hopf}
Wang, X., H.~Liu, and C.~Xu (2012).
\newblock Hopf bifurcations in a predator-prey system of population allelopathy
  with a discrete delay and a distributed delay.
\newblock {\em Nonlinear Dynamics\/}~{\em 69\/}(4), 2155--2167.

\bibitem[\protect\citeauthoryear{Wei}{Wei}{2007}]{wei2007bifurcation}
Wei, J. (2007).
\newblock \href{https://doi.org/10.1088/0951-7715/20/11/002}{Bifurcation
  analysis in a scalar delay differential equation}.
\newblock {\em Nonlinearity\/}~{\em 20\/}(11), 2483.

\bibitem[\protect\citeauthoryear{Wolfram~Research}{Wolfram~Research}{}]{Mathematica12}
Wolfram~Research, I.
\newblock Mathematica, {V}ersion 12.1.
\newblock Champaign, IL, 2020.

\bibitem[\protect\citeauthoryear{Yafia}{Yafia}{2007}]{yafia2007hopf}
Yafia, R. (2007).
\newblock Hopf bifurcation in differential equations with delay for
  tumor--immune system competition model.
\newblock {\em SIAM Journal on Applied Mathematics\/}~{\em 67\/}(6),
  1693--1703.

\bibitem[\protect\citeauthoryear{Yan and Li}{Yan and Li}{2006}]{yan2006hopf}
Yan, X.-P. and W.-T. Li (2006).
\newblock Hopf bifurcation and global periodic solutions in a delayed
  predator--prey system.
\newblock {\em Applied Mathematics and Computation\/}~{\em 177\/}(1), 427--445.

\bibitem[\protect\citeauthoryear{Zhang, Shen, and Chen}{Zhang
  et~al.}{2013}]{zhang2013hopf}
Zhang, G., Y.~Shen, and B.~Chen (2013).
\newblock Hopf bifurcation of a predator--prey system with predator harvesting
  and two delays.
\newblock {\em Nonlinear Dynamics\/}~{\em 73\/}(4), 2119--2131.

\bibitem[\protect\citeauthoryear{Zhou, Chen, and Song}{Zhou
  et~al.}{2012}]{zhou2012hopf}
Zhou, X., X.~Chen, and Y.~Song (2012).
\newblock Hopf bifurcation of a differential-algebraic bioeconomic model with
  time delay.
\newblock {\em Journal of Applied Mathematics\/}~{\em 2012}.

\end{thebibliography}
\end{document}